\newtheorem{thm}{Theorem}[section]
\newtheorem{lem}[thm]{Lemma}
\newtheorem{prop}[thm]{Proposition}
\newtheorem{cor}[thm]{Corollary}
\theoremstyle{definition}
\newtheorem{defn}[thm]{Definition}
\newtheorem{rem}[thm]{Remark}
\newtheorem{exam}[thm]{Example}
\newcommand{\bC}{{\mathbb{C}}}
\newcommand{\bN}{{\mathbb{N}}}
\newcommand{\bR}{{\mathbb{R}}}
\newcommand{\bQ}{{\mathbb{Q}}}
\newcommand{\B}{{\mathcal{B}}}
\renewcommand{\H}{{\mathcal{H}}}
\newcommand{\I}{{\mathcal{I}}}
\newcommand{\M}{{\mathcal{M}}}
\newcommand{\N}{{\mathcal{N}}}
\renewcommand{\O}{{\mathcal{O}}}
\newcommand{\U}{{\mathcal{U}}}
\renewcommand{\phi}{\varphi}
\newcommand{\fA}{{\mathfrak{A}}}
\newcommand{\fM}{{\mathfrak{M}}}
\newcommand{\qand}{\quad\text{and}\quad}
\newcommand{\qqand}{\qquad\text{and}\qquad}
\newcommand{\tr}{\mathrm{Tr}}
\newcommand{\diag}{\mathrm{diag}}
\newcommand{\conv}{\mathrm{conv}}
\newcommand{\cconv}{\overline{\mathrm{conv}}}
\newcommand{\dist}{\mathrm{dist}}
\begin{document}

\nocite{*}

\title[Closed Convex Hulls of Unitary Orbits]{Closed Convex Hulls of Unitary Orbits in C$^*$-Algebras of Real Rank Zero}

\author{Paul Skoufranis}
\address{Department of Mathematics, Texas A\&M University, College Station, Texas, USA, 77843-3368}

\email{pskoufra@math.tamu.edu}

\subjclass[2010]{46L05, 47C15, 47B15, 15A42}
\date{\today}
\keywords{Convex Hull of Unitary Orbits; Real Rank Zero C$^*$-Algebras; Simple; Eigenvalue Functions; Majorization}

\begin{abstract}
In this paper, we study closed convex hulls of unitary orbits in various C$^*$-algebras.  For unital C$^*$-algebras with real rank zero and a faithful tracial state determining equivalence of projections, a notion of majorization describes the closed convex hulls of unitary orbits for self-adjoint operators.  Other notions of majorization are examined in these C$^*$-algebras.  Combining these ideas with the Dixmier property, we demonstrate unital, infinite dimensional C$^*$-algebras of real rank zero and strict comparison of projections with respect to a faithful tracial state must be simple and have a unique tracial state.  Also, closed convex hulls of unitary orbits of self-adjoint operators are fully described in unital, simple, purely infinite C$^*$-algebras.
\end{abstract}

\maketitle

\section{Introduction}

Unitary orbits of operators are important objects that provide significant information about operators.  In the infinite dimensional setting, the norm closure of the unitary orbits must be taken as unitary groups are no longer compact.  For all intents and purposes, two operators that are approximately unitarily equivalent (that is, have the same closed unitary orbits) may be treated as the same operator inside a C$^*$-algebra and the question of when two (normal) operators are approximately unitarily equivalent has been studied in a variety of contexts (e.g. \cites{D1995, S2007}).

When two operators are not approximately unitarily equivalent, it is interesting to ask, ``How far are the operators from being approximately unitarily equivalent?"  This question is quantified by describing the distance between the operators' unitary orbits and has a long history.  For self-adjoint matrices $S$ and $T$ with eigenvalues $\{\mu_k\}^n_{k=1}$ and $\{\lambda_k\}^n_{k=1}$ respectively, the distance between the unitary orbits of $S$ and $T$ was computed in \cite{W1912} to be the optimal matching distance
\[
\min_{\sigma \in S_n} \max\{ |\lambda_k - \mu_{\sigma(k)}| \, \mid \, k \in \{1,\ldots, n\}\}
\]
where $S_n$ is the permutation group on $\{1,\ldots, n\}$.  However, if $S$ and $T$ are normal matrices, the distance between the unitary orbits of $S$ and $T$  need not equal the optimal matching distance (see \cite{H1992}).  For bounded normal operators on Hilbert space, results have been obtained analogous to the known matricial results (e.g. \cites{AD1984, D1988}).  This question has been active in other C$^*$-algebras (e.g. \cites{C2013, D1984, D1986, HN1989, HL2015, JST2015, S2013-1, ST1992}) where the most recent work has made use of $K$-theoretic properties and ideas.

Another important concept is that of majorization for self-adjoint matrices.  A notion of majorization for real-valued functions in $L_1[0,1]$ was first developed in \cite{HLP1929} by Hardy, Littlewood, and P\'{o}lya using non-increasing rearrangements and this notion has been widely studied (e.g. \cites{C1974, C1976, HLP1952, S1985}).  When applied to self-adjoint matrices through their eigenvalues, a fascinating concept is obtained.  Majorization of self-adjoint matrices has been thoroughly analyzed (e.g. \cites{A1989, B1946, KW2010, MOA2010, M1963, M1964, T1971}) and has relations to a wide range of problems in linear algebra, such as  classical theorem of Schur and Horn characterizing the possible diagonal $n$-tuples of a self-adjoint matrix based on its eigenvalues (see \cites{S1923, H1954}) and applications to generalized numerical ranges of matrices (see \cites{GS1977, P1980}).

Majorization has an immediate analogue in II$_1$ factors by replacing eigenvalues with spectral distributions.  By using the notion of majorization in \cite{K1983} (also see \cites{AM2008, AK2006, F1982, FK1986, H1987, K1984, K1985, P1985}) via eigenvalue functions (also known as spectral scales) of self-adjoint operators in II$_1$ factors, several analogues of matricial results have been obtained.  For example, an analogue of the Schur-Horn Theorem for II$_1$ factors was first postulated in \cite{AK2006} and proved by Ravichandran in \cite{R2012} (also see \cites{T1977, KS2015} for a generalization to non-self-adjoint operators using singular values, and \cite{MR2014} for a multivariate version) and analogues of generalized numerical ranges were developed in \cite{DS2015}.

The notion of majorization of self-adjoint operators in both matrix algebras and II$_1$ factors as a deep connection with unitary orbits.  Indeed, given two self-adjoint operators $S$ and $T$, it was shown for matrix algebras in \cite{A1989} and II$_1$ factors in \cites{K1983, K1984, K1985} that $T$ majorizes $S$ if and only if $S$ is in the (norm) closure of the convex hull of the unitary orbit of $T$, denoted $\cconv(\U(T))$.  Consequently, the question of whether $T$ majorizes $S$ is a question of whether $S$ can be obtained by `averaging' copies of $T$. 

Analysis of the closure of convex hulls of unitary orbits has yielded some interesting results.  For example, the Dixmier property for a C$^*$-algebra (\cite{D1969}) asks that the centre of the C$^*$-algebra interests every such orbit.  By \cite{R1983}, one need only consider self-adjoint operators to verify the Dixmier property and \cite{HZ1984} (also see \cite{R1982}) shows that a unital C$^*$-algebra $\fA$ has the Dixmier property if and only if $\fA$ is simple and has at most one faithful tracial state.

The goal of this paper is to describe the closure of convex hulls of unitary orbits of self-adjoint operators in various C$^*$-algebras.  Taking inspiration from von Neumann algebra theory, we will focus on C$^*$-algebras that behave like type III and type II$_1$ factors.  In particular, unital, simple, purely infinite C$^*$-algebras are our analogues of type III factors and unital C$^*$-algebras with real rank zero and a faithful tracial state determining equivalence of projections are our analogues of type II$_1$ factors.  In addition to this introduction, this paper contains five sections with results and their importance summarized below.

Section \ref{sec:Preliminaries} develops and extends the necessarily preliminary results on majorization of self-adjoint operators in matrix algebras and II$_1$ factors.  In particular, the notion of eigenvalue functions is adapted from II$_1$ factors to C$^*$-algebras with faithful tracial states by replacing spectral distributions with dimension functions (see Definition \ref{defn:eigenvalues-and-singular-values}).  The properties of eigenvalue functions are immediately transferred to this setting (see Theorem \ref{thm:properties-of-eigenvalue-functions}).  

Section \ref{sec:Simplicity} analyzes whether there are scalars in convex hulls of unitary orbits in C$^*$-algebras with faithful tracial states.  Notice if $\fA$ is a unital C$^*$-algebra with a faithful tracial state $\tau$ and $T \in \fA$, then $\tau(S) = \tau(T)$ for all $S \in \cconv(\U(T))$.  Consequently $\cconv(\U(T)) \cap \{\bC I_\fA\}$ is either empty or $\{\tau(T) I_\fA\}$.  Using an averaging process along with manipulations of projections, it is demonstrated in Theorem \ref{thm:scalars-in-convex-hulls} that if $\fA$ is a unital, infinite dimensional C$^*$-algebra with real rank zero and strict comparison of projections with respect to a faithful tracial state $\tau$, then $\tau(T) \in \cconv(\U(T))$ for all $T \in \fA$.  Combined with the Dixmier property, this implies $\fA$ must be simple and $\tau$ must be the unique faithful tracial state on $\fA$.  We note \cite{M2000} has also investigated the ability of faithful tracial states to imply simplicity of C$^*$-algebras.

Section \ref{sec:Convex-Hulls-Of-Unitary-Orbits} analyzes $\cconv(\U(T))$ for self-adjoint $T$ in unital C$^*$-algebras $\fA$ that have real rank zero and a faithful tracial state $\tau$ with the property that if $P, Q \in \fA$ are projections, then $\tau(P) \leq \tau(Q)$ if and only if $P$ is Murray-von Neumann equivalent to a subprojection of $Q$.  In particular, Theorem \ref{thm:classification-of-convex-hull} shows for such C$^*$-algebras that $S \in \cconv(\U(T))$ if and only if $T$ majorizes $S$ with respect to $\tau$.  Although the assumptions on $\fA$ are restrictive in the classification theory world, they do apply to several C$^*$-algebras such as UHF C$^*$-algebras, the Bunce-Deddens C$^*$-algebras, irrational rotations algebras, and many others.

Trying to generalize Theorem \ref{thm:classification-of-convex-hull} to other C$^*$-algebras may be a difficult task.  Indeed, it is the case that there are self-adjoint operators with the same eigenvalue functions that are not approximately unitarily equivalent when the assumption `$\tau(P) = \tau(Q)$ implies $P$ and $Q$ are equivalent' is removed.  In addition, the question of characterizing $\cconv(\U(T))$ appear very complicated if $\fA$ has more than one tracial state as, by above discussions, $\cconv(\U(T))\cap \{\bC I_\fA\} = \emptyset$.

Section \ref{sec:Other-Majorizations} is devoted to investigating other closed orbits and notions of majorization of operators in the same context as Section \ref{sec:Convex-Hulls-Of-Unitary-Orbits}.  We begin by using eigenvalue functions to re-derive the main result of \cite{ST1992}, which computes the distance between unitary orbits of self-adjoint operators via an analogue of the optimal matching distance (see Theorem \ref{thm:distance-between-unitary-orbits}).  In addition, an analogue of singular value decomposition of matrices is obtained (see Proposition \ref{prop:closed-two-sided-unitary-orbit}).  Furthermore, descriptions of when one operator's eigenvalue (singular value) function dominants another operator's eigenvalue (respectively singular value) function and when one operator (absolutely) submajorizes another operator are described.

Section \ref{sec:Purely-Infinite} concludes the paper by describing $\cconv(\U(T))$ for self-adjoint operators $T$ in unital, simple, purely infinite C$^*$-algebras.  In particular, $\cconv(\U(T))$ is precisely all self-adjoint operators $S$ such that the spectrum of $S$ is contained in the convex hull of the spectrum of $T$ (Theorem \ref{thm:convex-hull-of-unitary-orbit-in-uspi}).

\section{Preliminaries}
\label{sec:Preliminaries}

In this section, we develop the preliminaries necessary for the remainder of the paper.  In particular, after the following definitions, we will extend the notion and properties of eigenvalue functions to C$^*$-algebras with faithful tracial states.
\begin{defn}
For a unital C$^*$-algebra $\fA$ and an element $T \in \fA$, the \emph{unitary orbit} of $T$ is
\[
\U(T) := \{U^*TU \, \mid \, U \text{ a unitary in }\fA\}.
\]
The \emph{closed unitary orbit} of $T \in \fA$ is $\O(T)  := \overline{\U(T)}$, the norm closure of $\U(T)$.
The convex hull of $\U(T)$ will be denoted by $\conv(\U(T))$ and its norm closure by $\cconv(\U(T))$.
\end{defn}

One main component of this paper is the generalization of the following notions from tracial von Neumann algebras to tracial C$^*$-algebras.  The origins of the following definition may be traced back to \cite{MN1936}.
\begin{defn}[\cites{MN1936, F1982, FK1986}]
\label{defn:eigenvalues-and-singular-values-vN}
Let $\fM$ be a von Neumann algebra a tracial state $\tau$.  
\begin{enumerate}
\item For a self-adjoint operator $T \in \fM$, the \emph{eigenvalue function of} $T$ \emph{associated with} $\tau$, denoted $\lambda^\tau_T$, is defined for $s \in [0,1)$ by
\[
\lambda^\tau_T(s) := \inf\{t \in \bR \, \mid \, m_T((t,\infty)) \leq s\}
\]
where $m_T$ is the spectral distribution of $T$ with respect to $\tau$.
\item For an arbitrary $T \in \fM$, the \emph{singular value function of} $T$ \emph{associated with} $\tau$, denoted $\mu^\tau_T$, is defined for $s \in [0,1)$ by
\[
\mu^\tau_T(s) := \lambda^\tau_{|T|}(s).
\]
\end{enumerate}
\end{defn}

\begin{exam}
Let $T \in \M_n(\bC)$ be self-adjoint with eigenvalues $\{\lambda_k\}^n_{k=1}$ where $\lambda_k \geq \lambda_{k+1}$ for all $k$.  If $\tau$ is the normalized trace on $\M_n(\bC)$, then $\lambda^\tau_T(s) = \lambda_k$ for all $s \in \left[ \frac{k-1}{n}, \frac{k}{n}  \right)$.  Similarly, if $T \in \M_n(\bC)$ has singular values $\{\sigma_k\}^n_{k=1}$ where $\sigma_k \geq \sigma_{k+1}$ for all $k$, then $\mu^\tau_T(s) = \mu_k$ for all $s \in \left[ \frac{k-1}{n}, \frac{k}{n}  \right)$.
\end{exam}

\begin{exam}
\label{exam:non-increasing-rearrangements}
Let $\fM = L_\infty[0,1]$ equipped with the tracial state $\tau$ defined by integrating against the Lebesgue measure $m$.  If $f \in \fM$ is real-valued, then $\lambda^\tau_f(s) = f^*(s)$ where $f^*$ is the non-increasing rearrangement of $f$, which may be defined by
\[
f^*(s) := \inf\{t \in \bR \, \mid \, m(\{x \in [0,1] \, \mid \, f(x) > t \}) \leq s\}.
\]
It can be shown (see Theorem \ref{thm:properties-of-eigenvalue-functions}) that $f^*$ is a non-increasing, right continuous function.  Consequently, if $f$ is non-increasing and right continuous, then $f = f^*$.
\end{exam}

To generalize these notions to C$^*$-algebras with faithful tracial states, we will use the following as a replacement for spectral distributions.
\begin{defn}[\cite{C1978}]
Let $\epsilon > 0$ and let $f_\epsilon$ denote the continuous function on $[0, \infty)$ such that $f_\epsilon(x) = 1$ if $x \in [\epsilon, \infty)$, $f_\epsilon(x) = 0$ if $x \in [0, \frac{\epsilon}{2}]$, and $f_\epsilon(x)$ is linear on $(\frac{\epsilon}{2}, \epsilon)$.

Let $\fA$ be a unital C$^*$-algebra with faithful tracial state $\tau$.  The \emph{dimension function associated with $\tau$}, denoted $d_\tau$, is defined for positive operators $A \in \fA$ by
\[
d_\tau(A) := \lim_{\epsilon \to 0} \tau(f_\epsilon(A)).
\]
\end{defn}

\begin{defn}
\label{defn:eigenvalues-and-singular-values}
Let $\fA$ be a unital C$^*$-algebra with a faithful tracial state $\tau$.  
\begin{enumerate}
\item For a self-adjoint operator $T \in \fA$, the \emph{eigenvalue function of} $T$ \emph{associated with} $\tau$, denoted $\lambda^\tau_T$, is defined for $s \in [0,1)$ by
\[
\lambda^\tau_T(s) := \inf\{t \in \bR \, \mid \, d_\tau((T - t I_\fA)_+) \leq s\}
\]
where $(T - tI_\fA)_+$ denotes the positive part of $T - t I_\fA$.
\item For an arbitrary $T \in \fA$, the \emph{singular value function of} $T$ \emph{associated with} $\tau$, denoted $\mu^\tau_T$, is defined for $s \in [0,1)$ by
\[
\mu^\tau_T(s) := \lambda^\tau_{|T|}(s).
\]
\end{enumerate}
\end{defn}

Lemma \ref{lem:eigenvalue-functions-come-from-vN-algebras} will demonstrate that Definitions \ref{defn:eigenvalues-and-singular-values-vN} and \ref{defn:eigenvalues-and-singular-values} agree when $\fA$ is a von Neumann algebra.

\begin{exam}
\label{exam:finite-spectrum-eigenvalue-functions}
Let $\fA$ be a unital C$^*$-algebra with a faithful tracial state $\tau$.  Let $\{\lambda_k\}^n_{k=1} \subseteq \bR$ be such that $\lambda_k \geq \lambda_{k+1}$ for all $k$ and let $\{P_k\}^n_{k=1} \subseteq \fA$ be a collection of pairwise orthogonal projections such that $\sum^n_{k=1} P_k = I_\fA$.  For each $k \in \{0, 1,\ldots, n\}$, let $s_k = \sum^k_{j=1} \tau(P_j)$.  If $T = \sum^n_{k=1} \lambda_k P_k$, then $\lambda^\tau_T(s) = \lambda_k$ for all $s \in [s_{k-1}, s_k)$.
\end{exam}

\begin{rem}
Part (\ref{part:decreasing-and-continuous}) of Theorem \ref{thm:properties-of-eigenvalue-functions} demonstrates that eigenvalue functions are non-increasing and right continuous.  If $\fM$ is a diffuse von Neumann algebra, it is not difficult to show every non-increasing, right continuous function is the eigenvalue function of some self-adjoint operator in $\fM$.  Example \ref{exam:finite-spectrum-eigenvalue-functions} shows this is not the case for arbitrary C$^*$-algebras as the characteristic function of the set $[0, \alpha)$ is an eigenvalue function of a self-adjoint operator in $\fA$ if and only if $\fA$ has a projection of trace $\alpha$.
\end{rem}

Eigenvalue and singular value functions have several important properties.  Although most (if not all) of these properties can be demonstrated using C$^*$-algebraic techniques, we will appeal to von Neumann algebra theory to shorten the exposition.

For a unital C$^*$-algebra $\fA$ with a faithful tracial state $\tau$, let $\pi_\tau : \fA \to \B(L_2(\fA, \tau))$ be the GNS representation of $\fA$ with respect to $\tau$.  Note $\pi_\tau$ is faithful and $\tau$ is a vector state on $\B(L_2(\fA, \tau))$.  If $\fM$ is the von Neumann algebra generated by $\pi_\tau(\fA)$, specifically $\pi_\tau(\fA)''$, then $\tau$ extends to a tracial state on $\fM$.
\begin{lem}
\label{lem:eigenvalue-functions-come-from-vN-algebras}
Let $\fA$ be a unital C$^*$-algebra with faithful tracial state $\tau$ and let $\fM$ be the von Neumann algebra described above.  If $T \in \fA$ is self-adjoint, then
\[
\lambda^\tau_T(s) = \lambda^\tau_{\pi_\tau(T)}(s)
\]
for all $s \in [0,1)$, where $\lambda^\tau_{\pi_\tau(T)}$ is as defined in Definition \ref{defn:eigenvalues-and-singular-values-vN}.
\end{lem}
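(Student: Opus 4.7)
The plan is to reduce the identity of the two infima to showing that the two quantities inside the infima agree, namely that
\[
d_\tau\bigl((T - tI_\fA)_+\bigr) = m_{\pi_\tau(T)}\bigl((t,\infty)\bigr)
\]
for every $t \in \bR$. Once this is established, the definitions of $\lambda^\tau_T(s)$ and $\lambda^\tau_{\pi_\tau(T)}(s)$ immediately coincide as the infima of the same subset of $\bR$.

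To prove this equality, I would first exploit that $\pi_\tau$ is an injective $*$-homomorphism, so by continuous functional calculus $\pi_\tau(f_\epsilon((T-tI_\fA)_+)) = f_\epsilon((\pi_\tau(T) - tI_\fM)_+)$, and $\tau$ on $\fM$ extends $\tau$ on $\fA$ via $\pi_\tau$. Therefore
\[
\tau\bigl(f_\epsilon((T-tI_\fA)_+)\bigr) = \tau\bigl(f_\epsilon((\pi_\tau(T) - tI_\fM)_+)\bigr)
\]
for each $\epsilon > 0$. Letting $A = (\pi_\tau(T) - tI_\fM)_+ \in \fM_+$, the family $\{f_\epsilon\}_{\epsilon>0}$ is a pointwise non-decreasing family of continuous functions on $[0,\infty)$ as $\epsilon\downarrow 0$, with pointwise limit the characteristic function $\chi_{(0,\infty)}$. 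By the Borel functional calculus in $\fM$, $f_\epsilon(A)$ is then a monotone increasing net in $\fM$ converging SOT (equivalently, in the $\sigma$-weak topology) to the spectral projection $\chi_{(0,\infty)}(A)$, which equals $\chi_{(t,\infty)}(\pi_\tau(T))$.

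The critical step is that $\tau$ extends to a \emph{normal} tracial state on $\fM = \pi_\tau(\fA)''$: this follows because, in the GNS construction, $\tau$ is realized as the vector state $\langle\,\cdot\,\Omega_\tau,\Omega_\tau\rangle$ on $\B(L_2(\fA,\tau))$, which is normal and restricts to a normal trace on $\fM$. With normality in hand,
\[
\lim_{\epsilon \to 0}\tau\bigl(f_\epsilon(A)\bigr) = \tau\bigl(\chi_{(t,\infty)}(\pi_\tau(T))\bigr) = m_{\pi_\tau(T)}\bigl((t,\infty)\bigr).
\]
Combining this with the displayed equality above yields $d_\tau((T-tI_\fA)_+) = m_{\pi_\tau(T)}((t,\infty))$, as required.

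The main potential obstacle is ensuring the normality of the extended trace, since without it the key passage from a SOT-limit to a limit of traces is not justified; this is why I would invoke the GNS/vector-state realization explicitly rather than just asserting the extension. A minor bookkeeping point is verifying that the monotonicity of $\epsilon \mapsto f_\epsilon$ is in the correct direction (increasing as $\epsilon\downarrow 0$), which is immediate from the piecewise definition of $f_\epsilon$.
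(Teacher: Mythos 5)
Your proof is correct and takes essentially the same approach as the paper: both reduce to the pointwise identity $d_\tau((T-tI_\fA)_+) = m_{\pi_\tau(T)}((t,\infty))$, pass the limit of $\tau(f_\epsilon(\cdot))$ through the GNS representation, and invoke normality of the extended trace (equivalently, the weak$^*$ continuity of a vector state) to identify the limit with the trace of the spectral projection. Your write-up is a bit more explicit than the paper about verifying that the extended trace is normal and about the functional-calculus bookkeeping ($\chi_{(0,\infty)}(A) = \chi_{(t,\infty)}(\pi_\tau(T))$ and the monotone convergence of $f_\epsilon$), but the underlying argument is the same.
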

\begin{proof}
If $m_{\pi_\tau(T)}$ denotes the spectral distribution of $\pi_\tau(T)$ with respect to $\tau$, we obtain for all $t \in \bR$ that
\begin{align*}
d_\tau((T- tI_\fA)_+) &= \lim_{\epsilon \to 0} \tau(f_\epsilon((T- tI_\fA)_+)) \\
&=\lim_{\epsilon \to 0} \tau(\pi_\tau(f_\epsilon((T- tI_\fA)_+)))  \\
&=\lim_{\epsilon \to 0} \tau(f_\epsilon(\pi_\tau(T- tI_\fA)_+))= m_{\pi_\tau(T)}((t, \infty))
\end{align*}
as $f_\epsilon(\pi_\tau(T- tI_\fA)_+)$ converges in the weak$^*$-topology to the spectral projection of $\pi_\tau(T)$ onto $(t, \infty)$.  The result then follows by definitions.
\end{proof}

Using Lemma \ref{lem:eigenvalue-functions-come-from-vN-algebras}, the known properties of eigenvalue and singular value functions on von Neumann algebras automatically transfer to the tracial C$^*$-algebra setting.
\begin{thm}[see \cites{F1982, FK1986, P1985}]
\label{thm:properties-of-eigenvalue-functions}
Let $\fA$ be a unital C$^*$-algebra with faithful tracial state $\tau$ and let $T, S \in \fA$ be self-adjoint operators.  Then:
\begin{enumerate}
\item The map $s \mapsto \lambda^\tau_T(s)$ is non-increasing and right continuous.  \label{part:decreasing-and-continuous}
\item If $T \geq 0$, $\lim_{s \searrow 0} \lambda^\tau_T(s) = \left\|T\right\|$ and $\lambda^\tau_T(s) \geq 0$ for all $s \in [0,1)$. \label{part:norm-at-zero}
\item If $\sigma(T)$ denotes the spectrum of $T$, then $\lim_{s \nearrow 1} \lambda^\tau_T(s) = \inf\{t \, \mid \, t \in \sigma(T)\}$ and $\lim_{s \searrow 0} \lambda^\tau_T(s) = \sup\{t \, \mid \, t \in \sigma(T)\}$.
\item If $S \leq T$, then $\lambda^\tau_S(s) \leq \lambda^\tau_T(s)$ for all $s \in [0,1)$. \label{part:larger}
\item If $\alpha \geq 0$, then $\lambda^\tau_{\alpha T}(s) = \alpha \lambda^\tau_T(s)$ and $\lambda^\tau_{T + \alpha I_\fA} = \lambda^\tau_T(s) + \alpha$ for all $s \in [0,1)$. \label{part:positive-multiple}
\item $\lambda^\tau_{S + T}(s + t) \leq \lambda^\tau_S(s) + \lambda^\tau_T(t)$ for all $s, t \in [0,1)$ with $s + t < 1$.
\item $|\lambda^\tau_S(s) - \lambda^\tau_T(s)| \leq \left\|S - T\right\|$ for all $s \in [0,1)$. \label{part:difference}
\item $\tau(f(T)) = \int^1_0 f(\lambda^\tau_T(s)) \, ds$ for all continuous functions $f : \bR \to \bR$.\label{part:trace}
\item If $T \geq 0$, then $\lambda^\tau_{V^*TV}(s) \leq \left\|V\right\|^2 \lambda^\tau_{T}(s)$ for all $s \in [0,1)$ and $V \in \fA$. \label{part:conjugation}
\item If $U \in \fA$ is a unitary, then $\lambda^\tau_{U^*TU}(s) = \lambda^\tau_T(s)$ for all $s \in [0,1)$.\label{part:unitary}
\item If $T \geq 0$, $\lambda^\tau_{f(T)}(s) = f(\lambda^\tau_T(s))$ for all $s \in [0,1)$ and all continuous increasing functions $f : [0,\infty) \to \bR$ with $f(0) \geq 0$.
\item If $S, T \geq 0$, then $\int^t_0 f(\lambda^\tau_{S+T}(s)) \, ds \leq \int^t_0 f(\lambda^\tau_S(s) + \lambda^\tau_T(s)) \, ds$ for all $t \in [0,1]$ and all continuous, increasing, convex functions $f : \bR \to \bR$. \label{part:convex-integral}
\item If $S, T \geq 0$, then $\int^t_0 f(\lambda^\tau_{S+T}(s)) \, ds \leq \int^t_0 f(\lambda^\tau_S(s)) + f(\lambda^\tau_T(s)) \, ds$ for all $t \in [0,1]$ and all increasing concave functions $f : \bR \to \bR$ with $f(0) = 0$. \label{part:concave-integral}
\end{enumerate}
\end{thm}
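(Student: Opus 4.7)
The plan is to reduce every one of the thirteen properties to the corresponding statement about $\pi_\tau(T)$ and $\pi_\tau(S)$ inside the tracial von Neumann algebra $\fM = \pi_\tau(\fA)''$, where the result is already known from the literature (\cites{F1982, FK1986, P1985}). By Lemma \ref{lem:eigenvalue-functions-come-from-vN-algebras}, $\lambda^\tau_T(s) = \lambda^\tau_{\pi_\tau(T)}(s)$ for all $s \in [0,1)$, and the same applies to $S$, to $T + \alpha I_\fA$, to $S + T$, to $V^*TV$, to $U^*TU$, to $f(T)$, and to $|T|$, because $\pi_\tau$ is a faithful unital $*$-homomorphism and therefore intertwines algebraic operations and the continuous functional calculus.

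First I would verify, as an opening observation, that $\pi_\tau$ preserves self-adjointness, positivity, norms, unitaries, order, and spectra: in particular $\sigma(T) = \sigma(\pi_\tau(T))$ (since $\pi_\tau$ is a faithful $*$-homomorphism, hence isometric on the C$^*$-subalgebra generated by $T$), and $\|T\| = \|\pi_\tau(T)\|$. Each inequality or equality among eigenvalue functions appearing in parts (\ref{part:decreasing-and-continuous})--(\ref{part:unitary}) and in the subsequent parts involves only objects of the above form, so is equivalent to the analogous statement for $\pi_\tau(T)$ and $\pi_\tau(S)$ in $\fM$.

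With this translation in hand, parts (\ref{part:decreasing-and-continuous})--(\ref{part:difference}) and (\ref{part:conjugation})--(\ref{part:unitary}) are immediate citations from the von Neumann algebra theory of eigenvalue functions. For the trace identities in part (\ref{part:trace}) and the integral inequalities in parts (\ref{part:convex-integral}) and (\ref{part:concave-integral}), I would use that $\tau$ extends to a faithful tracial state on $\fM$ (as recorded in the paragraph preceding Lemma \ref{lem:eigenvalue-functions-come-from-vN-algebras}), so that $\tau(f(T)) = \tau(f(\pi_\tau(T)))$ for every continuous $f$, and the formula $\tau(g(\pi_\tau(T))) = \int_0^1 g(\lambda^\tau_{\pi_\tau(T)}(s))\,ds$ from the von Neumann algebra setting then delivers part (\ref{part:trace}) immediately, while parts (\ref{part:convex-integral}) and (\ref{part:concave-integral}) transfer verbatim.

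I do not expect a serious obstacle in this proof: the real content was packaged into Lemma \ref{lem:eigenvalue-functions-come-from-vN-algebras}. The only mildly delicate point is part (\ref{part:norm-at-zero}), where $\lim_{s\searrow 0}\lambda^\tau_T(s) = \|T\|$ for $T \geq 0$ needs $\|T\| = \|\pi_\tau(T)\|$, but this is guaranteed by the faithfulness of $\pi_\tau$. Thus the proof is essentially a systematic bookkeeping exercise of invoking the quoted von Neumann algebra results in \cites{F1982, FK1986, P1985} after the reduction supplied by Lemma \ref{lem:eigenvalue-functions-come-from-vN-algebras}.
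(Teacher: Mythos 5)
Your proposal is correct and takes essentially the same approach as the paper: the paper simply notes that Lemma~\ref{lem:eigenvalue-functions-come-from-vN-algebras} lets the known von Neumann algebra results transfer, and cites \cites{F1982, FK1986, P1985} without further argument. You have merely made the bookkeeping explicit, including the correct observation that $\pi_\tau$ intertwines the functional calculus and preserves norms and spectra, which is exactly what the paper's one-line appeal to the lemma tacitly relies on.
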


\begin{thm}[see \cites{F1982, FK1986}]
\label{thm:properties-of-singular-value-functions}
Let $\fA$ be a unital C$^*$-algebra with faithful tracial state $\tau$ and let $T, S, R \in \fA$.  Then:
\begin{enumerate}
\item $\mu^\tau_T(s) = \mu^\tau_{|T|}(s) = \mu^\tau_{T^*}(s)$ for all $s \in [0,1)$.
\item $\mu^\tau_{\alpha T}(s) = |\alpha|\mu^\tau_T(s)$ for all $s \in [0,1)$ and $\alpha \in \bC$. \label{part:mu-scalar}
\item $\mu^\tau_{RTS}(s) \leq \left\|R\right\|\left\|S\right\| \mu^\tau_T(s)$ for all $s \in [0,1)$. \label{part:mu-contractive}
\item $\mu^\tau_{ST}(s + t) \leq \mu^\tau_S(s)\mu^\tau_T(t)$ for all $s, t \in [0,1)$ with $s + t < 1$.
\item $\int^t_0 f(\mu^\tau_{S+T}(s)) \, ds \leq \int^t_0 f(\mu^\tau_S(s) + \mu^\tau_T(s)) \, ds$ for all $t \in [0,1]$ and all continuous, increasing, convex functions $f : \bR \to \bR$. \label{part:mu-convex-integral}
\item $\int^t_0 f(\mu^\tau_{S+T}(s)) \, ds \leq \int^t_0 f(\mu^\tau_S(s)) + f(\mu^\tau_T(s)) \, ds$ for all $t \in [0,1]$ and all increasing concave functions $f : \bR \to \bR$ with $f(0) = 0$. \label{part:mu-concave-integral}
\end{enumerate}
\end{thm}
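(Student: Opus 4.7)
The plan is to follow precisely the blueprint used for Theorem~\ref{thm:properties-of-eigenvalue-functions}: embed $\fA$ into its enveloping finite tracial von Neumann algebra $\fM = \pi_\tau(\fA)''$ via the GNS representation, reduce each of the six assertions to the corresponding statement about singular value functions inside $\fM$, and appeal to the well-developed theory of Fack--Kosaki \cites{F1982,FK1986}. The only genuinely new input required beyond Lemma~\ref{lem:eigenvalue-functions-come-from-vN-algebras} is an analogous identification for singular value functions of non-self-adjoint operators.

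The first step is therefore to establish that $\mu^\tau_T(s) = \mu^\tau_{\pi_\tau(T)}(s)$ for every $T \in \fA$ and every $s \in [0,1)$. Since $\pi_\tau$ is a unital $*$-homomorphism, it commutes with continuous functional calculus applied to the positive element $T^*T$, so
\[
|\pi_\tau(T)| = (\pi_\tau(T)^*\pi_\tau(T))^{1/2} = \pi_\tau((T^*T)^{1/2}) = \pi_\tau(|T|).
\]
Applying Lemma~\ref{lem:eigenvalue-functions-come-from-vN-algebras} to the self-adjoint operator $|T| \in \fA$ gives
\[
\mu^\tau_T(s) = \lambda^\tau_{|T|}(s) = \lambda^\tau_{\pi_\tau(|T|)}(s) = \lambda^\tau_{|\pi_\tau(T)|}(s) = \mu^\tau_{\pi_\tau(T)}(s),
\]
as desired.

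With this identification in hand, each part of the theorem translates term by term into a statement in $\fM$. Since $\pi_\tau$ is an isometric $*$-homomorphism, we have $\|\pi_\tau(X)\| = \|X\|$ and $\pi_\tau$ commutes with adjoints, scalar multiplication, sums, and products, so the images of the various operators appearing in each inequality are the operators to which the Fack--Kosaki results are to be applied. Concretely, part~(1) combines the identification above with the known equality $\mu^\tau_{\pi_\tau(T)^*}(s) = \mu^\tau_{\pi_\tau(T)}(s)$ in $\fM$; parts~(2)--(4) follow immediately from the corresponding submultiplicativity and scaling formulas in $\fM$; and parts~(5)--(6) follow from the Fack--Kosaki integral inequalities for sums of positive operators applied to $\pi_\tau(S), \pi_\tau(T) \in \fM$, together with the fact that the integrals on both sides are unchanged when the singular value functions are computed in $\fA$ versus in $\fM$.

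The only place where one has to be careful is in the first step: the definitions of $\mu^\tau$ in $\fA$ and in $\fM$ start from different data, namely the dimension function $d_\tau$ versus the spectral distribution $m_{\pi_\tau(T)}$. This is exactly the gap bridged by Lemma~\ref{lem:eigenvalue-functions-come-from-vN-algebras} for self-adjoint operators, and the calculation above extends it to arbitrary $T$ by functoriality of the continuous functional calculus. Once this is in place, no further C$^*$-algebraic work is needed: the remaining content is a direct quotation of the Fack--Kosaki theorems inside the finite tracial von Neumann algebra $\fM$.
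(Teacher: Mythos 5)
Your proof matches the paper's approach exactly. The paper gives no explicit argument for Theorem~\ref{thm:properties-of-singular-value-functions}; it simply remarks, just before Theorem~\ref{thm:properties-of-eigenvalue-functions}, that ``the known properties of eigenvalue and singular value functions on von Neumann algebras automatically transfer to the tracial C$^*$-algebra setting'' via Lemma~\ref{lem:eigenvalue-functions-come-from-vN-algebras}. Your argument supplies the one small but necessary step the paper leaves implicit, namely that $\pi_\tau(|T|)=|\pi_\tau(T)|$ by functoriality of the continuous functional calculus, so that the identification $\mu^\tau_T=\mu^\tau_{\pi_\tau(T)}$ holds for arbitrary $T$, not just self-adjoint $T$. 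One tiny wording quibble: in your treatment of parts (\ref{part:mu-convex-integral}) and (\ref{part:mu-concave-integral}) you invoke the Fack--Kosaki inequalities ``for sums of positive operators,'' but those singular-value submajorization inequalities hold for arbitrary $S,T\in\fM$ and positivity is not assumed in the statement (unlike the corresponding $\lambda$-inequalities, parts (\ref{part:convex-integral}) and (\ref{part:concave-integral}) of Theorem~\ref{thm:properties-of-eigenvalue-functions}, which do require $S,T\geq 0$); this does not affect the validity of the argument.
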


To define a notion of majorization for self-adjoint operators, we recall the following.

\begin{defn}[\cite{HLP1929}]
\label{defn:majorization-functions}
For real-valued functions $f, g \in L_\infty[0,1]$, it is said that $f$ \emph{majorizes} $g$, denoted $g \prec f$, if
\begin{align*}
\int^t_0 g^*(s) \, ds &\leq \int^t_0 f^*(s) \, ds \text{ for all } t \in [0,1] \qand
\int^1_0 g^*(s) \, ds &= \int^1_0 f^*(s) \, ds
\end{align*}
where $f^*$ and $g^*$ are the non-increasing rearrangements of $f$ and $g$ (see Example \ref{exam:non-increasing-rearrangements}).
\end{defn}

The following example provides some intuition for majorization and will be used in various forms later in the paper.
\begin{exam}
\label{exam:averaging}
Let $f \in L_\infty[0,1]$ be a real-valued function and fix $\{0 = s_0 < s_1 < \cdots < s_n = 1\}$.  For $k \in \{1,\ldots, n\}$, let
\[
\alpha_k = \frac{1}{s_k - s_{k-1}} \int^{s_k}_{s_{k-1}} f^*(s) \, ds
\]
and let $g= \sum^n_{k=1} \alpha_k 1_{[s_{k-1}, s_k)}$, where $1_X$ denotes the characteristic function of $X$.  We claim that $g \prec f$. Note $g$ is non-increasing and right continuous so $g^* = g$.  Furthermore, note
\[
\int^{s_k}_0 f^*(s) \, ds = \int^{s_k}_0 g(s) \, ds
\]
for all $k \in \{0, 1, \ldots, n\}$.  

Suppose $t \in [s_{k-1}, s_k]$.  If $g(t) \leq f^*(t)$, then $g(s) \leq f^*(s)$ for all $s \in [s_{k-1}, t]$ as $g$ is constant on $[s_{k-1}, s_k)$ and $f^*$ is non-increasing.  Thus
\[
\int^{t}_0 f^*(s) - g(s) \, ds = \int^{t}_{s_{k-1}} f^*(s) - g(s) \, ds \geq 0.
\]
Otherwise $g(t) > f^*(t)$.  Hence $g(s) > f^*(s)$ for all $s \in [t, s_k)$ as $g$ is constant on $[s_{k-1}, s_k)$ and $f^*$ is non-increasing. Thus
\[
\int^{t}_0 f^*(s) - g(s) \, ds = \int^{t}_{s_{k-1}} f^*(s) - g(s) \, ds \geq \int^{s_k}_{s_{k-1}} f^*(s) - g(s) \, ds = 0.
\]
Hence $g \prec f$ as claimed.
\end{exam}

\begin{defn}
\label{defn:majorization}
Let $\fA$ be a unital C$^*$-algebra with a faithful tracial state $\tau$.  For self-adjoint elements $T, S \in \fA$, it is said that $T$ \emph{majorizes} $S$ \emph{with respect to} $\tau$, denoted $S \prec_\tau T$, if $\lambda^\tau_S \prec \lambda^\tau_T$.
\end{defn}
\begin{rem}
Note by part (\ref{part:decreasing-and-continuous}) of Theorem \ref{thm:properties-of-eigenvalue-functions} that eigenvalue functions are equal to their non-increasing rearrangements.  Therefore, for $S$ and $T$ as in Definition \ref{defn:majorization},  we obtain $S \prec_\tau T$ if and only if
\begin{align*}
\int^t_0 \lambda^\tau_S(s) \, ds &\leq \int^t_0 \lambda^\tau_T(s) \, ds \text{ for all } t \in [0,1] \qand
\int^1_0 \lambda^\tau_S(s) \, ds &= \int^1_0 \lambda^\tau_T(s) \, ds.
\end{align*}
Furthermore, by part (\ref{part:trace}) of Theorem \ref{thm:properties-of-eigenvalue-functions}, the later condition is equivalent to $\tau(S) = \tau(T)$.
\end{rem}

\begin{exam}
\label{exam:majorization-self-adjoint-matrices}
Let $T, S \in \M_n(\bC)$ be self-adjoint with eigenvalues 
\[
\{\lambda_1 \geq \lambda_2 \geq \cdots \geq \lambda_n\} \qand \{\mu_1 \geq \mu_2 \geq \cdots \geq \mu_n\}
\]
respectively.   If $\tau$ is the normalized trace on $\M_n(\bC)$, then $S \prec_\tau T$ if and only if
\begin{align*}
\sum^m_{k=1} \mu_k \leq \sum^m_{k=1} \lambda_k  \text{ for all }m \in \{1,\ldots, n-1\}\qqand \sum^n_{k=1} \mu_k = \sum^n_{k=1} \lambda_k.
\end{align*}
\end{exam}

\begin{rem}
It is a consequence of part (\ref{part:positive-multiple}) of Theorem \ref{thm:properties-of-eigenvalue-functions} that if $S, T \in \fA$ are self-adjoint, then $S \prec_\tau T$ if and only if $S + \alpha I_\fA \prec_\tau T + \alpha I_\fA$ for any $\alpha\in \bR$.  Consequently, it often suffices to consider positive operators when demonstrating results involving majorization.
\end{rem}

There are several equivalent formulations of majorization of self-adjoint operators in tracial von Neuman algebras as the following theorem demonstrates.
\begin{thm}[see \cites{AK2006, AM2008, A1989, H1987, HN1991, K1983, K1984, K1985}]
\label{thm:majorization-in-factors}
Let $\fM$ be a von  Neumann algebra with a faithful tracial state $\tau$.  Let $T, S \in \fM$ be positive operators.  Then the following are equivalent:
\begin{enumerate}
\item $S \prec_\tau T$.
\item $\tau((S- r I_\fM)_+) \leq \tau((T - r I_\fM)_+)$ for all $r > 0$ and $\tau(T) = \tau(S)$.
\item $\tau(f(S)) \leq \tau(f(T))$ for every continuous convex function $f : \bR \to \bR$. 
\end{enumerate}
If $\fM$ is a factor, then for all self-adjoint $S, T \in \fM$, $S \prec_\tau T$ is equivalent to:
\begin{enumerate}
\setcounter{enumi}{3}
\item $S \in \cconv(\U(T))$. \label{part:convex}
\item $S \in \overline{\conv(\U(T))}^{w^*}$. \label{part:weak*}
\item There exists a unital, trace-preserving, positive map $\Phi : \fM \to \fM$ such that $\Phi(T) = S$. \label{part:positive-map}
\item There exists a unital, trace-preserving, completely positive map $\Phi : \fM \to \fM$ such that $\Phi(T) = S$. \label{part:cp-map}
\end{enumerate}
\end{thm}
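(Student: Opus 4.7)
The plan is to handle the first three equivalences (which hold in any tracial von Neumann algebra) via functional-analytic arguments on eigenvalue functions, and then close the cycle of implications for the factor case by chasing around (1) $\Rightarrow$ (4) $\Rightarrow$ (5) $\Rightarrow$ (7) $\Rightarrow$ (6) $\Rightarrow$ (1).

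For (1) $\Leftrightarrow$ (2), I would apply part (\ref{part:trace}) of Theorem \ref{thm:properties-of-eigenvalue-functions} to the function $f(x) = (x-r)_+$ to obtain the identity $\tau((T - rI_\fM)_+) = \int_0^1 (\lambda^\tau_T(s) - r)_+\, ds$, and the equivalence is then the classical Hardy--Littlewood--P\'olya duality: for non-increasing functions $\phi,\psi \in L_\infty[0,1]$ with the same integral, $\int_0^t \psi \leq \int_0^t \phi$ for all $t\in[0,1]$ is equivalent to $\int_0^1 (\psi - r)_+\, ds \leq \int_0^1 (\phi - r)_+\, ds$ for every $r > 0$. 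This is proved by an integration-by-parts argument on the level sets of $\phi,\psi$. For (2) $\Leftrightarrow$ (3), the forward direction follows by taking $f(x) = (x-r)_+$ and $f(x)=\pm x$ in (3); for the converse I would approximate any continuous convex $f$ uniformly on $\sigma(T)\cup\sigma(S)$ by non-negative linear combinations of $x\mapsto(x-r)_+$ plus an affine term and pass through part (\ref{part:trace}).

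For the factor case, (4) $\Rightarrow$ (5) is immediate, and (7) $\Rightarrow$ (6) is trivial. For (5) $\Rightarrow$ (7), the maps $\Phi(X) = \sum_{i=1}^{n} \alpha_i U_i^* X U_i$ arising from convex combinations of unitary conjugations are unital, trace-preserving, and completely positive; the set of such maps $\fM \to \fM$ is point-weak$^*$-compact (by Banach--Alaoglu applied to the predual picture), so passing to weak$^*$-limits preserves all three properties and delivers a $\Phi$ with $\Phi(T) = S$. For (6) $\Rightarrow$ (2), given a unital, positive, trace-preserving $\Phi$ with $\Phi(T) = S$, the trace pairing produces a Banach-adjoint $\Phi^*$ which is again unital, positive, and trace-preserving. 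Setting $E = \chi_{(r,\infty)}(S)$ and $A = \Phi^*(E)$ gives $0 \leq A \leq I_\fM$, and since $(T - rI_\fM) \leq (T - rI_\fM)_+$ and $A \leq I_\fM$, one computes
\[
\tau((S - rI_\fM)_+) = \tau((T - rI_\fM)A) \leq \tau((T - rI_\fM)_+\, A) \leq \tau((T - rI_\fM)_+),
\]
which yields (2); the trace identity $\tau(S) = \tau(T)$ is immediate from trace-preservation.

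The main obstacle is (1) $\Rightarrow$ (4), the Schur--Horn--Kamei majorization theorem for factors. My plan is to first use the norm-continuity of $T\mapsto \lambda^\tau_T$ (part (\ref{part:difference}) of Theorem \ref{thm:properties-of-eigenvalue-functions}) together with functional calculus to reduce to the case where $T = \sum_{k=1}^n \lambda_k P_k$ and $S = \sum_{k=1}^n \mu_k Q_k$ have finite spectrum with compatible refinements of spectral projections, so that the majorization condition reduces to the finite-dimensional inequalities of Example \ref{exam:majorization-self-adjoint-matrices} after scaling by the traces. In a factor the set $\{\tau(P) : P \text{ a projection}\}$ is an interval and any two projections of equal trace are unitarily equivalent; exploiting this, I would construct $S$ as an iterated two-point average of unitary conjugates of $T$, at each stage replacing a pair of eigenvalue slots $(\lambda_i,\lambda_j)$ on matched subprojections by a convex combination $(\lambda'_i,\lambda'_j)$ moving strictly closer to the target configuration $(\mu_k)$, in the spirit of Example \ref{exam:averaging}. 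The factor hypothesis is exactly what makes this averaging realizable by \emph{unitary} conjugations rather than merely by positive trace-preserving maps, and controlling the iteration in norm (rather than only in trace) is the delicate point that forces the closure in (4).
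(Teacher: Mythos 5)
The paper states this theorem as a cited result (Kamei, Hiai, Hiai--Nakamura, Ando, and others) and gives no internal proof, so there is nothing in the text to compare against directly. Assessing your plan on its own merits: the cycle (1) $\Rightarrow$ (4) $\Rightarrow$ (5) $\Rightarrow$ (7) $\Rightarrow$ (6) $\Rightarrow$ (2) $\Leftrightarrow$ (1) has the right structure, and most links are sound. The Hardy--Littlewood--P\'{o}lya duality applied to $\lambda^\tau_S, \lambda^\tau_T$ gives (1) $\Leftrightarrow$ (2), and since $S, T \geq 0$ the $r \leq 0$ inequalities follow automatically once the integrals agree; (2) $\Leftrightarrow$ (3) by uniform approximation of continuous convex functions by affine-plus-hinge combinations is classical. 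The implications (4) $\Rightarrow$ (5), (5) $\Rightarrow$ (7) via point-weak$^*$ compactness of the unital trace-preserving completely positive maps, (7) $\Rightarrow$ (6), and (6) $\Rightarrow$ (2) via the Banach adjoint $\Phi^*$ (which inherits unitality, positivity, and trace-preservation from $\Phi$ because $|\tau(\Phi(X)Y)| \leq \|Y\|_\infty \|X\|_1$ for trace-preserving positive $\Phi$) all check out, including the chain
\[
\tau((S - rI_\fM)_+) = \tau((T - rI_\fM)\Phi^*(E)) \leq \tau((T - rI_\fM)_+ \Phi^*(E)) \leq \tau((T - rI_\fM)_+).
\]

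The genuine gap is in (1) $\Rightarrow$ (4), which carries essentially all the weight. You correctly reduce to finite spectra, correctly observe that the factor hypothesis allows matching spectral subprojections of equal trace so that two-point pinchings are realized by honest unitary conjugations, and correctly flag the delicate point (norm control of the iteration) --- but you do not execute it. Making a sequence of pinchings simultaneously stay inside $\cconv(\U(T))$ and converge in norm to $S$ is not a finishing touch; it is the technical heart of Kamei's theorem, and it is exactly the book-keeping the paper has to fight through in its C$^*$-algebraic analogue in Section~\ref{sec:Convex-Hulls-Of-Unitary-Orbits} (see Lemma~\ref{lem:pinching}, Lemma~\ref{lem:reduction-argument}, and the proof of Theorem~\ref{thm:classification-of-convex-hull}, which require a careful recursive scheme and repeated verification that majorization is preserved after each pinching). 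As written, your (1) $\Rightarrow$ (4) is a correct outline, not a proof; a complete argument needs either to carry out such a reduction scheme in full, or to cite one of the original sources for the von Neumann case.
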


One goal of this paper is to see to what extent Theorem \ref{thm:majorization-in-factors} generalizes to tracial C$^*$-algebras.  Note Lemma \ref{lem:eigenvalue-functions-come-from-vN-algebras} immediately implies the following.
\begin{cor}
\label{cor:versions-of-majorization-for-positives}
Let $\fA$ be a unital C$^*$-algebra with a faithful tracial state $\tau$.  Let $T, S \in \fA$ be positive operators.  Then the following are equivalent:
\begin{enumerate}
\item $S \prec_\tau T$.
\item $\tau((S- r I_\fM)_+) \leq \tau((T - r I_\fM)_+)$ for all $r > 0$ and $\tau(T) = \tau(S)$.
\item $\tau(f(S)) \leq \tau(f(T))$ for every continuous convex function $f : \bR \to \bR$. 
\end{enumerate}
\end{cor}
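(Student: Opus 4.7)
The plan is to transfer the equivalence directly from the von Neumann algebra setting (Theorem \ref{thm:majorization-in-factors}) to the tracial C$^*$-algebra setting via the GNS construction, using Lemma \ref{lem:eigenvalue-functions-come-from-vN-algebras} as the key bridge. The statement of Theorem \ref{thm:majorization-in-factors} already lists exactly the three conditions of the corollary among its equivalences, and those three are equivalent in any von Neumann algebra with a faithful tracial state (without needing the factor hypothesis).

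First I would invoke the GNS setup from Lemma \ref{lem:eigenvalue-functions-come-from-vN-algebras}: let $\pi_\tau : \fA \to \fM := \pi_\tau(\fA)''$ with $\tau$ extended to a faithful tracial state on $\fM$. Since $\pi_\tau$ is a faithful, trace-preserving $*$-homomorphism, continuous functional calculus commutes with $\pi_\tau$; in particular $\pi_\tau((T - rI_\fA)_+) = (\pi_\tau(T) - rI_\fM)_+$ and $\pi_\tau(f(T)) = f(\pi_\tau(T))$ for any continuous $f : \bR \to \bR$, and likewise for $S$. Therefore
\[
\tau((T - rI_\fA)_+) = \tau((\pi_\tau(T) - rI_\fM)_+), \qquad \tau(f(T)) = \tau(f(\pi_\tau(T))),
\]
and analogously for $S$, so conditions (2) and (3) of the corollary hold in $\fA$ if and only if the corresponding conditions hold for $\pi_\tau(S), \pi_\tau(T)$ in $\fM$.

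Next, by Lemma \ref{lem:eigenvalue-functions-come-from-vN-algebras}, $\lambda^\tau_T = \lambda^\tau_{\pi_\tau(T)}$ and $\lambda^\tau_S = \lambda^\tau_{\pi_\tau(S)}$ pointwise on $[0,1)$. Consequently $S \prec_\tau T$ in $\fA$ is by definition the same statement as $\pi_\tau(S) \prec_\tau \pi_\tau(T)$ in $\fM$, so condition (1) also transfers.

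Finally, I would apply the first three clauses of Theorem \ref{thm:majorization-in-factors} to the positive operators $\pi_\tau(S), \pi_\tau(T) \in \fM$, yielding the desired equivalence of (1), (2), (3) for $S, T \in \fA$. There is no real obstacle here; the only subtlety worth stating clearly is that the factor hypothesis in Theorem \ref{thm:majorization-in-factors} is used only for clauses (\ref{part:convex})--(\ref{part:cp-map}), whereas the three clauses we are importing hold in any tracial von Neumann algebra, so the GNS envelope $\fM$ (which need not be a factor) is perfectly adequate.
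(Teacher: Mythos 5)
Your proposal is correct and follows exactly the route the paper intends: the paper simply says that Lemma \ref{lem:eigenvalue-functions-come-from-vN-algebras} ``immediately implies'' the corollary, and your argument is precisely the spelled-out version of that implication---transfer conditions (1)--(3) through the trace-preserving GNS embedding and then apply the first three clauses of Theorem \ref{thm:majorization-in-factors}, which hold without the factor hypothesis. No gaps; this matches the paper's reasoning.
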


For the remaining equivalences in Theorem \ref{thm:majorization-in-factors}, note part (\ref{part:weak*}) does not make sense in an arbitrary C$^*$-algebra.  We will mainly focus on part (\ref{part:convex}) of Theorem \ref{thm:majorization-in-factors} to which we have the following preliminary result.
\begin{lem}
\label{lem:easy-direction-for-convex-hulls}
Let $\fA$ be a unital C$^*$-algebra with a faithful tracial state $\tau$ and let $T \in \fA$ be self-adjoint.  Then
\begin{enumerate}
\item If $\lambda \in \bR$, then $\lambda I_\fA \prec_\tau T$ if and only if $\lambda = \tau(T)$
\item If $S \in \cconv(\U(T))$, then $S = S^*$ and $S \prec_\tau T$.
\end{enumerate}
\end{lem}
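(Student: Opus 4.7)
The plan is to prove (1) by direct computation with eigenvalue functions, and to prove (2) by separating the self-adjointness and majorization claims, using Lemma \ref{lem:eigenvalue-functions-come-from-vN-algebras} to transfer the majorization assertion into the tracial von Neumann algebra $\fM := \pi_\tau(\fA)''$.

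For part (1), the function $\lambda^\tau_{\lambda I_\fA}$ is identically $\lambda$, so by part (\ref{part:trace}) of Theorem \ref{thm:properties-of-eigenvalue-functions} the total-integral condition in Definition \ref{defn:majorization} reads $\lambda = \tau(T)$, proving necessity.  For sufficiency, assuming $\lambda = \tau(T)$, the fact that $\lambda^\tau_T$ is non-increasing (part (\ref{part:decreasing-and-continuous})) with average value $\tau(T) = \lambda$ over $[0,1]$ implies that its running averages on $[0,t]$ are at least $\lambda$, which is exactly the partial-integral inequality
\[
\int^t_0 \lambda\, ds = \lambda t \leq \int^t_0 \lambda^\tau_T(s)\, ds \qfor t \in [0,1].
\]

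For part (2), $S$ is self-adjoint because $\U(T)$ is contained in $\fA_\sa$, convex combinations preserve self-adjointness, and $\fA_\sa$ is norm-closed.  For the majorization, unitary invariance and norm continuity of $\tau$ give $\tau(S) = \tau(T)$ at once, handling the total-integral equality.  Lemma \ref{lem:eigenvalue-functions-come-from-vN-algebras} then lets me replace $S$ and $T$ by $\pi_\tau(S)$ and $\pi_\tau(T)$, and since $\pi_\tau$ is an isometric $\ast$-homomorphism sending unitaries to unitaries, it carries $\cconv(\U(T))$ into the closed convex hull of the unitary orbit of $\pi_\tau(T)$ inside $\fM$.  Inside this tracial von Neumann algebra I would invoke the Ky Fan type identity
\[
\int^t_0 \lambda^\tau_A(s)\, ds = \sup\{\tau(AX) \, \mid \, 0 \leq X \leq I_\fM, \ \tau(X) \leq t\},
\]
which exhibits the left-hand side as a supremum of linear functionals of $A$, hence as a norm-continuous convex functional on $\fM_\sa$ that is moreover invariant under conjugation by unitaries, thanks to part (\ref{part:unitary}) of Theorem \ref{thm:properties-of-eigenvalue-functions}.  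The desired partial-integral inequality then propagates from $T$ itself to every convex combination of unitary conjugates of $T$, and finally to the norm closure.

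The main obstacle is the Ky Fan identity in a tracial von Neumann algebra that need not be a factor; if one preferred to avoid it, an alternative route would be to translate $T$ to a positive operator using the remark following Example \ref{exam:majorization-self-adjoint-matrices} and combine Corollary \ref{cor:versions-of-majorization-for-positives} with the Jensen-type trace inequality $\tau(f(\sum_k \alpha_k U_k^* T U_k)) \leq \tau(f(T))$ for continuous convex $f$ and unitaries $U_k$, which is standard for finite traces.
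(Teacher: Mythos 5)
Your argument is correct, and part (1) matches the paper's own reasoning (the paper invokes Example \ref{exam:averaging} with the trivial partition, which is precisely the observation that running averages of a non-increasing function dominate the overall average). Part (2), however, takes a genuinely different route. The paper stays entirely at the C$^*$-level and deduces the partial-integral inequality for $R = \sum_k t_k U_k^* T U_k$ directly from the already-catalogued property (\ref{part:concave-integral}) of Theorem \ref{thm:properties-of-eigenvalue-functions} applied with $f = \mathrm{id}$ (the sublinearity of $A \mapsto \int_0^t \lambda^\tau_A(s)\,ds$ under sums of positives), together with parts (\ref{part:positive-multiple}) and (\ref{part:unitary}); no excursion into $\fM = \pi_\tau(\fA)''$ and no Ky Fan variational formula are needed. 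Your route, by contrast, imports the identity $\int_0^t \lambda^\tau_A(s)\,ds = \sup\{\tau(AX) : 0 \le X \le I,\ \tau(X) \le t\}$, which is not among the properties the paper records in Theorem \ref{thm:properties-of-eigenvalue-functions}, though it is a standard consequence of the Fack--Kosaki machinery in any finite von Neumann algebra and is indeed convex, unitarily invariant and norm-continuous as you claim. Two caveats worth recording: your $\tau(X) \le t$ form of the Ky Fan identity is the one for $A \ge 0$; for merely self-adjoint $A$ you should first translate $A$ by a scalar (the remark after Example \ref{exam:majorization-self-adjoint-matrices} licenses this) or replace $\tau(X) \le t$ by $\tau(X) = t$. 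Your proposed alternative via the tracial Jensen inequality $\tau(f(\sum_k t_k U_k^* T U_k)) \le \sum_k t_k \tau(f(U_k^* T U_k)) = \tau(f(T))$ combined with Corollary \ref{cor:versions-of-majorization-for-positives}(3) is also valid and is in fact closer in spirit to what the paper does, since part (\ref{part:concave-integral}) is a localized form of exactly that Jensen-type estimate. What the paper's approach buys you is economy: everything needed is already transferred to the C$^*$-setting in Theorem \ref{thm:properties-of-eigenvalue-functions}, so no further von Neumann algebra input is required; what your approach buys is a conceptual picture of $\int_0^t \lambda^\tau_\cdot$ as a supremum of linear functionals, which makes convexity and norm-continuity manifest rather than encoded in a list of properties.
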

\begin{proof}
The first claim follows from Example \ref{exam:averaging} and part (\ref{part:trace}) of Theorem \ref{thm:properties-of-eigenvalue-functions}.  

For the second claim, suppose $\{U_k\}^n_{k=1} \subseteq \fA$ are unitary operators, $\{t_k\}^n_{k=1} \subseteq [0,1]$ are such that $\sum^n_{k=1} t_k = 1$, and $R = \sum^n_{k=1} t_k U^*_k T U_k$.  Then $R$ is self-adjoint and $\tau(R) = \tau(T)$.  Moreover, by parts (\ref{part:positive-multiple}, \ref{part:unitary}, \ref{part:concave-integral}) of Theorem \ref{thm:properties-of-eigenvalue-functions}, 
\begin{align*}
\int^t_0 \lambda^\tau_R(s) \, ds \leq \int^t_0 \sum^n_{k=1} t_k \lambda^\tau_{U^*_k TU_k}(s) \, ds = \int^t_0 \sum^n_{k=1} t_k \lambda^\tau_T(s) \ ds = \int^t_0 \lambda^\tau_T(s) \, ds
\end{align*}
for all $t \in [0,1]$.  Thus $R \prec_\tau T$ for all $R \in \conv(\U(T))$.

If $S \in \cconv(\U(T))$, then clearly $S = S^*$. The fact that $S \prec_\tau T$ then follows by part (\ref{part:difference}), the above paragraph, the fact that $\tau$ is norm continuous, and the fact that
\[
\left|\int^t_0 f(s) - g(s) \, ds\right| \leq \left\|f -g \right\|_\infty
\]
for all $t \in [0,1]$ and all bounded functions $f$ and $g$.
\end{proof}

It is unlikely that parts (\ref{part:positive-map}, \ref{part:cp-map}) of Theorem \ref{thm:majorization-in-factors} holds in arbitrary tracial C$^*$-algebras due to the lack of ability to take weak$^*$-limits of convex combinations of inner automorphisms.  However, we have the following analogue of \cite{H1987}*{Proposition 4.4}.
\begin{prop}
\label{prop:positive-maps}
Let $\fA$ be a unital C$^*$-algebra with a faithful tracial state $\tau$ and let $\varphi : \fA \to \fA$ be a positive map.  Then $\varphi$ is unital and $\tau$-preserving if and only if $\varphi(T) \prec_\tau T$ for all positive operators $T \in \fA$.
\end{prop}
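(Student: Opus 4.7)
The plan is to verify both directions using the equivalences in Corollary \ref{cor:versions-of-majorization-for-positives}, which reduce $\prec_\tau$ to trace inequalities involving $(T - rI_\fA)_+$ and convex functions of $T$.

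\textbf{($\Rightarrow$)} Suppose $\varphi$ is unital and $\tau$-preserving, and fix $T \geq 0$. I would check condition (2) of Corollary \ref{cor:versions-of-majorization-for-positives} for $\varphi(T) \prec_\tau T$. The trace equality $\tau(\varphi(T)) = \tau(T)$ is immediate. For $r > 0$, setting $S = T - rI_\fA$ and using unitality, $\varphi(T) - rI_\fA = \varphi(S)$, so it suffices to prove
\[
\tau(\varphi(S)_+) \leq \tau(S_+)
\]
for every self-adjoint $S \in \fA$. Since $S \leq S_+$, positivity of $\varphi$ yields $\varphi(S) \leq \varphi(S_+)$ with $\varphi(S_+) \geq 0$. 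Working in the GNS von Neumann algebra $\fM = \pi_\tau(\fA)''$ (to which $\tau$ extends faithfully), let $P \in \fM$ be the spectral projection of $\varphi(S)$ onto $(0, \infty)$. Then $\varphi(S)_+ = P\varphi(S)P$, and the following elementary trace fact closes the argument: if $A \leq B$, $B \geq 0$ in $\fM$, and $P \in \fM$ is a projection, then $\tau(PAP) \leq \tau(PBP) \leq \tau(B)$. The first inequality follows from $P(B-A)P \geq 0$; the second uses cyclicity and idempotency of $P$ to write $\tau(B) - \tau(PBP) = \tau((I-P)B(I-P)) \geq 0$. Taking $A = \varphi(S)$ and $B = \varphi(S_+)$ yields $\tau(\varphi(S)_+) \leq \tau(\varphi(S_+)) = \tau(S_+)$.

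\textbf{($\Leftarrow$)} Suppose $\varphi(T) \prec_\tau T$ for every $T \geq 0$. The equality of total integrals built into Definition \ref{defn:majorization}, combined with Theorem \ref{thm:properties-of-eigenvalue-functions}(\ref{part:trace}) applied to $f(x) = x$, yields $\tau(\varphi(T)) = \tau(T)$ for all $T \geq 0$; linearity extends this to all of $\fA$, so $\varphi$ is $\tau$-preserving. For unitality, set $R := \varphi(I_\fA) \geq 0$. From $R \prec_\tau I_\fA$ we get $\tau(R) = 1$, while Corollary \ref{cor:versions-of-majorization-for-positives}(3) applied to the convex function $f(x) = x^2$ gives $\tau(R^2) \leq \tau(I_\fA) = 1$. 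The Cauchy--Schwarz inequality for the faithful trace $\tau$ gives $\tau(R^2) \geq \tau(R)^2 = 1$, so
\[
\tau((R - I_\fA)^2) = \tau(R^2) - 2\tau(R) + 1 = 0,
\]
and faithfulness of $\tau$ forces $R = I_\fA$.

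\textbf{Main obstacle.} The substantive step is the trace inequality $\tau(\varphi(S)_+) \leq \tau(S_+)$ for self-adjoint $S$. Since $x \mapsto x_+$ is convex but not operator convex, standard Kadison-type inequalities do not apply directly to a merely positive $\varphi$; the approach above bypasses this by passing to spectral projections in the enveloping tracial von Neumann algebra $\pi_\tau(\fA)''$ and leveraging only positivity of $\varphi$ together with the tracial identity. The reverse direction is then comparatively soft, extracting unitality from a Cauchy--Schwarz equality and $\tau$-preservation from the first moment of the eigenvalue function.
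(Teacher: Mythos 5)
Your proposal is correct. The forward direction takes essentially the paper's route: both reduce to showing $\tau\bigl((\varphi(T)-rI_\fA)_+\bigr) \leq \tau\bigl((T-rI_\fA)_+\bigr)$ via the chain $\varphi(T)-rI_\fA = \varphi(T-rI_\fA) \leq \varphi\bigl((T-rI_\fA)_+\bigr)$ followed by the trace estimate "$A \leq B$, $B \geq 0$ implies $\tau(A_+) \leq \tau(B)$." The paper asserts this last step silently; you supply the spectral-projection argument in $\pi_\tau(\fA)''$ that justifies it, which is a genuine improvement in exposition, and you are right that it is the nontrivial point since $x \mapsto x_+$ is not operator convex.

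For unitality in the converse direction your argument is different from the paper's. The paper uses Theorem \ref{thm:properties-of-eigenvalue-functions}(\ref{part:decreasing-and-continuous},\ref{part:norm-at-zero}) to bound $\|\varphi(I_\fA)\|$ via the behavior of $\lambda^\tau_{\varphi(I_\fA)}$ near $s = 0$, obtaining $0 \leq \varphi(I_\fA) \leq I_\fA$, and then applies faithfulness of $\tau$ to $I_\fA - \varphi(I_\fA) \geq 0$. You instead apply Corollary \ref{cor:versions-of-majorization-for-positives}(3) with $f(x) = x^2$ to get $\tau(R^2) \leq 1$, combine with $\tau(R) = 1$, and conclude $\tau\bigl((R-I_\fA)^2\bigr) \leq 0$, forcing $R = I_\fA$ by faithfulness. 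Both are valid; your route avoids the eigenvalue-function calculus but your invocation of Cauchy--Schwarz is superfluous: from $\tau(R^2) \leq 1$ and $\tau(R) = 1$ alone one gets $\tau\bigl((R-I_\fA)^2\bigr) = \tau(R^2) - 2\tau(R) + 1 \leq 0$ directly, and positivity of $(R-I_\fA)^2$ plus faithfulness finish it; the lower bound $\tau(R^2) \geq 1$ is never needed.
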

\begin{proof}
Suppose $\varphi$ is unital, positive, and $\tau$-preserving.  Let $T \in \fA$ be positive.  Then $\tau(\varphi(T)) = \tau(T)$.  Furthermore, for all $r > 0$ notice
\[
\varphi(T) - r I_\fA = \varphi(T - rI_\fA) \leq \varphi((T - rI_\fA)_+)
\]
so
\[
\tau((\varphi(T) - r I_\fA)_+) \leq \tau(\varphi((T - rI_\fA)_+)) = \tau((T - r I_\fA)_+).
\]
Hence Corollary \ref{cor:versions-of-majorization-for-positives} implies that $\varphi(T) \prec_\tau T$.

Conversely, suppose $\varphi : \fA \to \fA$ is a positive map such that $\varphi(T) \prec_\tau T$ for all positive operators $T \in \fA$.  By part (\ref{part:trace}) of Theorem \ref{thm:properties-of-eigenvalue-functions}, 
\[
\tau(\varphi(T)) = \int^1_0 \lambda^\tau_{\varphi(T)}(s) \, ds = \int^1_0 \lambda^\tau_{T}(s) \, ds = \tau(T)
\]
for all positive operators $T \in \fA$.  Hence $\varphi$ is $\tau$-preserving.   Since $\lambda^\tau_{I_\fA}(s) = 1$ for all $s \in [0,1)$, by parts (\ref{part:decreasing-and-continuous}, \ref{part:norm-at-zero}) of Theorem \ref{thm:properties-of-eigenvalue-functions},
\[
\left\|\varphi(I_\fA)\right\| = \lim_{t \searrow 0} \frac{1}{t} \int^t_0 \lambda^\tau_{\varphi(I_\fA)}(s) \, ds \leq \lim_{t \searrow 0} \frac{1}{t} \int^t_0 \lambda^\tau_{I_\fA}(s) \, ds = 1.
\]
Hence $0 \leq \varphi(I_\fA) \leq I_\fA$.  If $\varphi(I_\fA) \neq I_\fA$, then
\[
0 = \tau(I_\fA) - \tau(\varphi(I_\fA)) = \tau(I_\fA - \varphi(I_\fA)) > 0,
\]
a clear contradiction.  Hence $\varphi(I_\fA) = I_\fA$.
\end{proof}

There are many other forms of majorization for elements of $L_\infty[0,1]$.  We note the following notion, which is used in Section \ref{sec:Other-Majorizations}.
\begin{defn}
\label{defn:absolute-submajorization}
Let $\fA$ be a unital C$^*$-algebra with a faithful tracial state $\tau$.  For $T, S \in \fA$, it is said that $T$ \emph{(absolutely) submajorizes} $S$ \emph{with respect to} $\tau$, denoted $S \prec^w_\tau T$, if
\begin{align*}
\int^t_0 \mu^\tau_S(s) \, ds &\leq \int^t_0 \mu^\tau_T(s) \, ds \text{ for all } t \in [0,1].
\end{align*}
\end{defn}

\section{Scalars in Convex Hulls}
\label{sec:Simplicity}

In this section, we will demonstrate for certain unital C$^*$-algebras $\fA$ with a faithful tracial state $\tau$ that $\tau(T)I_\fA \in \cconv(\U(T))$ for all self-adjoint $T \in \fA$ (see Theorem \ref{thm:scalars-in-convex-hulls}).  Combined with the Dixmier property, this implies these C$^*$-algebras are simple; that is, have no closed ideals (see Theorem \ref{thm:simple-C-algebras}).  We begin with definitions and examples of C$^*$-algebras for which these results apply.

\begin{defn}
A unital C$^*$-algebra $\fA$ is said to have real rank zero if the set of invertible self-adjoint operators of $\fA$ is dense in the set of self-adjoint operators.  Equivalently, by \cite{BP1991}, $\fA$ has real rank zero if and only if every self-adjoint element of $\fA$ can be approximated by self-adjoint elements with finite spectrum.  
Also $\fA$ is said to have stable rank one if the set of invertible elements is dense in $\fA$.
\end{defn}

\begin{defn}
Let $\fA$ be a unital C$^*$-algebra and let $P, Q \in \fA$ be projections.  It is said that $P$ and $Q$ are Murray-von Neumann equivalent (or simply equivalent), denoted $P \sim Q$, if there exists an element $V \in \fA$ such that $P = V^*V$ and $Q = VV^*$.  It is said that $P$ is equivalent to a subprojection of $Q$, denoted $P \lesssim Q$, if there exists a projection $Q' \leq Q$ such that $P \sim Q'$.
\end{defn}

\begin{defn}
Let $\fA$ be a unital C$^*$-algebra with a faithful tracial state $\tau$.  Then:
\begin{enumerate}
\item $\fA$ is said to have \emph{strong comparison of projections with respect to $\tau$} if for all projections $P, Q \in \fA$, $\tau(P) \leq \tau(Q)$ implies $P \lesssim Q$.
\item $\fA$ is said to have \emph{strict comparison of projections with respect to $\tau$} if for all projections $P, Q \in \fA$, $\tau(P) < \tau(Q)$ implies $P \lesssim Q$.
\end{enumerate}
\end{defn}

\begin{rem}
Note $\fA$ having strong (strict) comparison of projections with respect to $\tau$ is precisely saying that (FCQ1) (respectively (FCQ2)) of \cite{B1988} has an affirmative answer for $\fA$, provided $\tau$ is the only tracial state on $\fA$.  Furthermore, notice if $\fA$ has strong comparison of projections with respect to $\tau$, then $P \sim Q$ if and only if $\tau(P) = \tau(Q)$.  
\end{rem}

There are several C$^*$-algebras that are known to have the above properties.

\begin{exam}
\label{exam:properties0}
Type II$_1$ factors are well-known to be unital C$^*$-algebras that are simple, have real rank zero, and have strong comparison of projections with respect to a faithful tracial state, which happens to be unique.
\end{exam}
\begin{exam}
\label{exam:properties1}
It is not difficult to verify that UHF C$^*$-algebras and the Bunce-Deddens algebras (specific direct limits of $\M_n(C(\mathbb{T}))$) are unital, simple, real rank zero C$^*$-algebras that have strong comparison of projections with respect to a faithful tracial state, which happens to be unique.  However, as mentioned in \cite{B1988}, there exists unital, simple, AFD C$^*$-algebras with unique tracial states that do not have strong comparison of projections.
\end{exam}
\begin{exam}
\label{exam:properties2}
As mentioned in \cite{ST1992}, irrational rotation algebras and, more generally, simple non-commutative tori for which the map from K$_0$ to $\bR$ induced by the tracial state is faithful are examples of unital, simple, real rank zero C$^*$-algebras that have strong comparison of projections with respect to a faithful tracial state, which happens to be unique.
\end{exam}
\begin{exam}
\label{exam:properties3}
More generally, if $\fA$ is a unital, simple, C$^*$-algebra with real rank zero, stable rank one, and a tracial state $\tau$ such that the induced map $\tau_* : K_0(\fA) \to \bR$ defined by $\tau_*([x]_0) = \tau(x)$ is injective, then $\fA$ will have strong comparison of projections with respect to $\tau$ by cancellation.  In particular, \cite{E1993} can be used to produce examples.
\end{exam}
\begin{exam}
\label{exam:properties4}
In \cite{P2005}, it was demonstrated free minimal actions of $\mathbb{Z}^d$ on Cantor sets give rise to cross product C$^*$-algebras that have real rank zero, stable rank one, and strict comparison of projections with respect to their tracial states.
\end{exam}
\begin{exam}
\label{exam:properties5}
For certain tracial reduced free product C$^*$-algebras, \cite{A1982} implies simplicity, \cite{DHR1997} implies stable rank one, and \cite{DR2000} implies real rank zero and strict comparison of projections. 
\end{exam}

Notice that all of the C$^*$-algebras presented above are simple.  This turns out to be no coincidence.  To see this, we prove the following result.
\begin{thm}
\label{thm:scalars-in-convex-hulls}
Let $\fA$ be a unital C$^*$-algebra with real rank zero.  Suppose $\tau$ is a faithful tracial state on $\fA$ such that either:
\begin{enumerate}[(a)]
\item $\fA$ has strong comparison of projections with respect to $\tau$, or \label{ass:strong}
\item $\fA$ has strict comparison of projections with respect to $\tau$ and for every $n \in \bN$ there exists a projection $P \in \fA$ such that $0 < \tau(P) < \frac{1}{n}$. \label{ass:strict}
\end{enumerate}
Then $\tau(T) I_\fA \in \cconv(\U(T))$ for all self-adjoint $T \in \fA$.
\end{thm}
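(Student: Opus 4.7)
My plan is to reduce to finite spectrum using real rank zero, then iteratively contract the spread of $T$ via a Halmos-style averaging: pair subprojections of the maximal and minimal spectral projections using the comparison hypothesis, swap them by unitary conjugation, and average. Each round strictly reduces the spread, and taking a suitable limit yields $\tau(T)I_\fA \in \cconv(\U(T))$. Concretely, by real rank zero, given $\varepsilon > 0$ I approximate $T$ in norm within $\varepsilon$ by a self-adjoint $T' = \sum_{k=1}^{n}\lambda_k P_k$ with finite spectrum, where $\lambda_1 > \cdots > \lambda_n$ and $\{P_k\}$ are pairwise orthogonal projections summing to $I_\fA$. Since unitary conjugation is norm-preserving and $|\tau(T) - \tau(T')| \leq \varepsilon$, it suffices to show $\tau(T')I_\fA \in \cconv(\U(T'))$.

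\textbf{The swap and iteration.} Using (a) or (b), I find subprojections $P_1' \leq P_1$ and $P_n' \leq P_n$ that are Murray--von Neumann equivalent with common trace close to $\min(\tau(P_1), \tau(P_n))$. Under (a), strong comparison directly provides equivalent subprojections of any common trace $\leq \min(\tau(P_1), \tau(P_n))$; under (b), strict comparison together with the small-projection hypothesis allows a common trace arbitrarily close to this minimum. Letting $V$ implement $P_1' \sim P_n'$, the operator $U := V + V^* + (I_\fA - P_1' - P_n')$ is a self-adjoint unitary in $\fA$ that swaps $P_1'$ with $P_n'$ while fixing the complement. A direct computation gives
\[
\tfrac{1}{2}(T' + U^* T' U) \in \conv(\U(T')),
\]
and on $P_1'$ and $P_n'$ this element replaces $\lambda_1$ and $\lambda_n$ by their mean $(\lambda_1 + \lambda_n)/2$, leaving the other spectral pieces intact. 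Performing such swaps simultaneously for all ``extreme pairs'' in one round and then iterating, I produce a sequence in $\conv(\U(T'))$ whose spreads decrease strictly and whose norm limit is a scalar $\nu I_\fA \in \cconv(\U(T'))$. Because trace is preserved at every stage, $\nu = \tau(T')$, giving the desired conclusion.

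\textbf{Main obstacle.} The hard part is ensuring that the iterative spread reduction actually converges to a single point, and here the two hypotheses contribute distinct difficulties. Under (a), spread reduction by a factor bounded away from $1$ per round follows from the availability of \emph{exact} equivalent cuts; a secondary technicality is that a discrete trace range on projections forces $\fA$ to be essentially matricial, where classical averaging applies directly, while a dense trace range allows any desired common trace. Under (b), failure of equal trace to imply Murray--von Neumann equivalence forces the matching in the swap to be only approximate: small leftover pieces from the mismatch of traces must be tracked across iterations, with an estimate showing their cumulative norm contribution vanishes in the limit. This quantitative control---showing that the per-round spread reduction stays bounded below by a fixed factor while the accumulated residuals vanish---is the principal technical obstacle, and is exactly why both strong comparison and the combination of strict comparison with small projections appear in the hypothesis.
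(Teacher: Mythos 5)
Your opening reduction to finite spectrum via real rank zero matches the paper, and your pairwise "Halmos swap" $\tfrac12(T' + U^*T'U)$ is a correct computation. However, there is a genuine gap in the convergence argument, and it is precisely where you flag the "main obstacle." The assertion that one round of swaps reduces the spread by a factor bounded away from $1$ is false in general. The issue is the residual: if $\tau(P_1) \neq \tau(P_n)$, the swap leaves a nonzero piece $P_1 - P_1'$ (or $P_n - P_n'$) still carrying the extreme eigenvalue, and the new spread can be arbitrarily close to the old one. Concretely, take $T' = 1\cdot P_1 + 0.1\cdot P_2 + 0\cdot P_3$ with $\tau(P_1)=0.5$, $\tau(P_2)=0.49$, $\tau(P_3)=0.01$: after swapping $P_3$ against a subprojection of $P_1$, the spectrum is $\{1,\,0.5,\,0.1\}$ on projections of traces $\{0.49,\,0.02,\,0.49\}$, so the spread only drops from $1$ to $0.9$, and the number of spectral values has increased. "Strict decrease" of the spread does not force convergence to a scalar, and no quantitative estimate is supplied. (Your spread-halving claim is correct, and easy, for two-point spectrum; the problem is that you never reduce to that case.) The treatment of hypothesis (b) — tracking "cumulative residuals" — is likewise only a sketch of a strategy rather than an argument.

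The paper avoids all of this by taking a structurally different route. It first reduces to the two-point case $T = aP + b(I-P)$ by iteratively applying the two-point result on corner compressions $(P_i+P_j)\fA(P_i+P_j)$ (Lemma \ref{lem:trace-for-finite-spectrum}), which genuinely shrinks the number of distinct spectral values instead of only averaging extremes. Then, for two-point $T$, rather than a single pairwise swap, it constructs a full (possibly nonunital) copy of $\M_{k}(\bC)$ from $k$ Murray–von Neumann equivalent copies of $P$ inside $I_\fA - P$, where $k$ comes from the division algorithm $1 = k\tau(P) + r$, and averages over the entire unitary group of $\M_k(\bC)$ in one step to reach the compression's trace. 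The leftover projection of trace $r$ drives a recursion whose associated scalar sequences $(a_n)$, $(b_n)$ are interleaved and monotone (Lemma \ref{lem:two-point-spectrum-strong}), hence converge, and the nondecrease of the divisors $k_n$ (Lemma \ref{lem:division-algorithm}) forces a common limit. That monotonicity — not a per-step contraction of spread — is what makes the limit argument go through; it has no analogue in your swap scheme. If you want to salvage your approach, you would need to (i) first collapse to two distinct spectral values before iterating, and (ii) under (b), replace the vague "track residuals" step with the paper's device of handling the remainder $r_n = 0$ case via case~(\ref{case:strict-one-less}) of Lemma~\ref{lem:constructing-matrix-algebras} and the small-projection hypothesis.
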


Once Theorem \ref{thm:scalars-in-convex-hulls} is established, we easily obtain the following.
\begin{thm}
\label{thm:simple-C-algebras}
If $\fA$ and $\tau$ are as in the hypotheses of Theorem \ref{thm:scalars-in-convex-hulls}, then $\fA$ is simple and $\tau$ is the unique tracial state on $\fA$.
\end{thm}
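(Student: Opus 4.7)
The plan is to deduce Theorem \ref{thm:simple-C-algebras} from Theorem \ref{thm:scalars-in-convex-hulls} by invoking the Dixmier property and the characterization of C$^*$-algebras with the Dixmier property mentioned in the introduction.

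First, I would apply Theorem \ref{thm:scalars-in-convex-hulls} to conclude that $\tau(T) I_\fA \in \cconv(\U(T))$ for every self-adjoint $T \in \fA$. Since $\tau(T) I_\fA$ lies in the centre of $\fA$, this gives
\[
\cconv(\U(T)) \cap Z(\fA) \neq \emptyset
\]
for every self-adjoint $T \in \fA$, where $Z(\fA)$ denotes the centre of $\fA$. By the result of \cite{R1983} cited in the introduction, the Dixmier property can be verified on self-adjoint elements alone, so $\fA$ has the Dixmier property.

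Next, I would invoke \cite{HZ1984} (again, cited in the introduction), which states that a unital C$^*$-algebra has the Dixmier property if and only if it is simple and admits at most one tracial state. From this, $\fA$ is simple. Moreover, since $\tau$ is by hypothesis a faithful tracial state on $\fA$, $\fA$ admits exactly one tracial state, namely $\tau$.

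There is essentially no obstacle here: the theorem is a clean corollary of Theorem \ref{thm:scalars-in-convex-hulls} together with the previously established machinery of the Dixmier property. All the work has already gone into verifying that scalars sit inside the closed convex hulls of unitary orbits; after that, invoking \cite{R1983} and \cite{HZ1984} delivers both simplicity and uniqueness of the trace at once.
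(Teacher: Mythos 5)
Your proof is correct, but it takes a genuinely different route from the paper's. You invoke the machinery the paper surveys in its introduction: Theorem \ref{thm:scalars-in-convex-hulls} shows $\tau(T)I_\fA \in \cconv(\U(T))$ for self-adjoint $T$, so by \cite{R1983} the Dixmier property holds (verification on self-adjoints suffices), and then \cite{HZ1984} delivers both simplicity and uniqueness of the trace at once. The paper instead gives a direct, self-contained argument attributed to \cite{R1982}: for simplicity, it takes a non-zero positive $T$ in a putative ideal $\I$, observes that $\cconv(\U(T)) \subseteq \I$ because $\I$ is a closed two-sided ideal, and concludes $\tau(T)I_\fA \in \I$ with $\tau(T) \neq 0$ by faithfulness, so $\I = \fA$; for trace uniqueness, it notes via Lemma \ref{lem:easy-direction-for-convex-hulls} that any other tracial state $\tau_0$ is constant on $\cconv(\U(T))$, hence $\tau_0(T) = \tau_0(\tau(T)I_\fA) = \tau(T)$ for all self-adjoint $T$. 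Your approach is shorter if one accepts \cite{HZ1984} as a black box; the paper's avoids the deeper external theorem and makes the mechanism transparent. One small point worth flagging in your version: the paper's introduction phrases the Haagerup--Zsid\'{o} criterion as ``simple and at most one \emph{faithful} tracial state,'' while the standard statement (and the one you quote) drops ``faithful''; these agree because in a simple unital C$^*$-algebra every tracial state is automatically faithful (its left kernel is a closed two-sided ideal not containing $I_\fA$), so your conclusion that $\tau$ is the unique tracial state follows cleanly either way.
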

\begin{proof}
The following argument can be found in \cite{R1982} but is repeated for convenience of the reader.  Suppose $\I$ is a non-zero ideal in $\fA$.  Let $T \in \I \setminus \{0\}$ be positive.  Therefore $\tau(T) I_\fA \in \cconv(\U(T)) \subseteq \I$ by Theorem \ref{thm:scalars-in-convex-hulls}.  As $\tau$ is faithful, $\tau(T) \neq 0$ so $\I = \fA$.  Hence $\fA$ is simple.

Suppose $\tau_0$ is another tracial state on $\fA$.  By Lemma \ref{lem:easy-direction-for-convex-hulls}, $\tau_0(S) = \tau_0(T)$ for all $S \in \cconv(\U(T))$.  Hence Theorem \ref{thm:scalars-in-convex-hulls} implies 
\[
\tau_0(T) = \tau_0(\tau(T)I_\fA) = \tau(T).
\]
As this holds for all self-adjoint $T \in \fA$, we obtain that $\tau_0 = \tau$.
\end{proof}
\begin{rem}
If $\fA$ is a unital, infinite dimensional C$^*$-algebra with real rank zero and a faithful tracial state $\tau$, then it is possible to verify for all $n \in \bN$ that there exists a projection $P \in \fA$ such that $0 < \tau(P) < \frac{1}{n}$.
\end{rem}

\begin{exam}
To see why strict comparison of projections without arbitrarily small projections is not sufficient in Theorem \ref{thm:simple-C-algebras}, consider the C$^*$-algebra $\fA = \bC \oplus \bC$ with the faithful tracial state $\tau((a,b)) = \frac{1}{2}(a+b)$.  It is clear that $\fA$ is a unital C$^*$-algebra with real rank zero and strict comparison of projections with respect to $\tau$. However, $\fA$ is not simple.
\end{exam}

\begin{rem}
There are non-simple C$^*$-algebras with faithful tracial states.  Indeed \cite{M2000} produces a unital non-separable C$^*$-algebra with a faithful tracial state whereas \cite{O2013} produces a unital, separable, nuclear, non-simple C$^*$-algebra with a faithful tracial state.
\end{rem}

Note the following easily verified lemma which will be used often without citation.
\begin{lem}
\label{lem:convex-hull-transitive}
Let $\fA$ be a unital C$^*$-algebra and let $T, S, R \in \fA$.  If $T \in \cconv(\U(S))$ and $S \in \cconv(\U(R))$, then  $T \in \cconv(\U(R))$.
\end{lem}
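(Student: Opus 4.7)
The plan is to reduce the lemma to the single observation that $\cconv(\U(R))$ is invariant under conjugation by unitaries of $\fA$, and then appeal to convexity and closedness. More concretely, I would carry out the following two steps.

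\textbf{Step 1: unitary invariance of $\cconv(\U(R))$.} For any unitary $W \in \fA$, the map $X \mapsto W^*XW$ is an isometry of $\fA$, and it sends $\U(R)$ into itself, since $(VW)^*R(VW) \in \U(R)$ whenever $V$ is a unitary in $\fA$. By linearity it therefore sends $\conv(\U(R))$ into $\conv(\U(R))$, and by norm-continuity it sends $\cconv(\U(R))$ into $\cconv(\U(R))$. In particular, every unitary conjugate of $S$ lies in $\cconv(\U(R))$ once $S$ does.

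\textbf{Step 2: approximation argument.} Since $T \in \cconv(\U(S))$, choose a sequence of finite convex combinations
\[
T_n = \sum_{k=1}^{m_n} t_{n,k}\, U_{n,k}^* S\, U_{n,k}
\]
converging in norm to $T$. By Step 1 and the hypothesis $S \in \cconv(\U(R))$, each $U_{n,k}^* S U_{n,k}$ lies in $\cconv(\U(R))$. Since $\cconv(\U(R))$ is convex, $T_n \in \cconv(\U(R))$; since it is closed, $T = \lim_n T_n \in \cconv(\U(R))$.

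There is no serious obstacle: the content of the lemma is exactly the unitary invariance of $\cconv(\U(R))$ observed in Step 1, which is why the author labels it as ``easily verified.'' The only point worth flagging is that one must handle convex combinations first and then pass to norm limits, rather than trying to conjugate a limit directly, but this is handled transparently by the isometry $X \mapsto W^*XW$.
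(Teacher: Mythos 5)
Your proof is correct, and since the paper explicitly declines to supply an argument (labeling the lemma ``easily verified''), it is safe to say yours is the intended one: unitary invariance of $\cconv(\U(R))$ via the isometry $X \mapsto W^*XW$, followed by convexity and closedness. Both steps are carried out carefully, including the point about conjugating finite convex combinations before passing to limits.
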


To prove Theorem \ref{thm:scalars-in-convex-hulls}, it will suffice to prove the theorem for self-adjoint operators with finite spectrum by the assumption that $\fA$ has real rank zero.  Combined with the following remark, it will suffice to consider self-adjoint operators with two points in their spectra.
\begin{rem}
Let $\fA$ be a unital C$^*$-algebra and let $P \in \fA$ be a non-zero projection.  If $\fA$ has real rank zero, then $P\fA P$ is a unital C$^*$-algebra of real rank zero by \cite{BP1991}.  Furthermore, if $\tau$ is a faithful tracial state on $\fA$ satisfying hypothesis (\ref{ass:strong}) (respectively (\ref{ass:strict})) of Theorem \ref{thm:scalars-in-convex-hulls}, then $\tau_P : P \fA P \to \bC$ defined by $\tau_P(PTP) = \frac{1}{\tau(P)} \tau(PTP)$ is a faithful tracial state on $P\fA P$ satisfying hypothesis (\ref{ass:strong}) (respectively (\ref{ass:strict})) of Theorem \ref{thm:scalars-in-convex-hulls}.  Thus the hypotheses of Theorem \ref{thm:scalars-in-convex-hulls} are all preserved under compressions.  We will continue throughout the remainder of the paper to use $\tau_P$ for the tracial state defined above.
\end{rem}

To prove Theorem \ref{thm:scalars-in-convex-hulls} for self-adjoint operators with two points in their spectra, we will use equivalence of projections to construct matrix algebras and apply results on majorization for self-adjoint matrices, specifically part (\ref{part:convex}) of Theorem \ref{thm:majorization-in-factors}, to average part of one spectral projection with the other.  Using a back-and-forth-type argument, we eventually obtain an operator in $\conv(\U(T))$ that is almost $\tau(T)I_\fA$. 

As $\tau(\fA)$ may not equal $[0,1]$, we may only divide projections up based on the size of another projection.  As such, the following division algorithm result will be of use to us and is easily verified.
\begin{lem}
\label{lem:division-algorithm}
Let $t \in (0, \frac{1}{2}]$ and write $1 = k_1 t + r_1$ where $k_1 \in \bN$ and $0 \leq r_1 < t$.  Then $k_1 \geq 2$ and $0 \leq r_1 < \frac{1}{k_1+1}$.  Furthermore, if $r_1 \neq 0$ and $1 = k_2 r_1 + r_2$ for some $k_2 \in \bN$ and $0 \leq r_2 \leq r_1$, then $k_2 \geq k_1$.
\end{lem}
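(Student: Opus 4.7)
The lemma is a purely arithmetic statement, so the plan is to extract each conclusion directly from the defining equation $1 = k_1 t + r_1$ and the remainder bound $r_1 < t$, with no need for anything from the C$^*$-algebra machinery.

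First I would handle the bound $k_1 \geq 2$ by a one-line contradiction. If $k_1 = 1$, then $1 = t + r_1$, but $r_1 < t$ would force $1 < 2t \leq 1$ since $t \leq \tfrac{1}{2}$, which is impossible. Hence $k_1 \geq 2$.

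Next I would derive $r_1 < \tfrac{1}{k_1 + 1}$ algebraically. From $1 = k_1 t + r_1$ we get $t = \tfrac{1 - r_1}{k_1}$, and the remainder inequality $r_1 < t$ becomes $k_1 r_1 < 1 - r_1$, i.e.\ $(k_1 + 1) r_1 < 1$, which rearranges to $r_1 < \tfrac{1}{k_1 + 1}$ as required.

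For the final assertion, I would combine the previous bound with the second division. Assume $r_1 \neq 0$ and $1 = k_2 r_1 + r_2$ with $0 \leq r_2 \leq r_1$. Then
\[
1 = k_2 r_1 + r_2 \leq (k_2 + 1)\, r_1 < \frac{k_2 + 1}{k_1 + 1},
\]
so $k_1 + 1 < k_2 + 1$, giving $k_2 > k_1$, and in particular $k_2 \geq k_1$.

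No step is really an obstacle; the only thing to watch is that the bound $r_1 < \tfrac{1}{k_1+1}$ is strict (even when $r_2 = r_1$ is allowed), which is precisely what lets the chain of inequalities in the last step be strict and deliver the needed comparison of $k_1$ and $k_2$.
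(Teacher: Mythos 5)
Your proof is correct. The paper gives no argument for this lemma (it is simply labeled ``easily verified''), so there is nothing to compare against, but your elementary verification is exactly the kind of argument intended: the contradiction for $k_1 = 1$ via $1 = t + r_1 < 2t \leq 1$, the rearrangement of $r_1 < t = \frac{1-r_1}{k_1}$ into $(k_1+1)r_1 < 1$, and the chain $1 \leq (k_2+1)r_1 < \frac{k_2+1}{k_1+1}$ all check out. In fact your last step establishes the strict inequality $k_2 > k_1$, slightly stronger than the stated $k_2 \geq k_1$, which is harmless.
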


The following lemma will be our method of constructing matrix algebras.  However, the embedding of each matrix algebra into $\fA$ need not be a unital embedding.
\begin{lem}
\label{lem:constructing-matrix-algebras}
Let $\fA$ be a unital C$^*$-algebra with a faithful tracial state $\tau$ and let $P \in \fA$ be a projection with $\tau(P) \in \left(0,\frac{1}{2}\right]$.    Write $1 = k \tau(P) + r$ where $k \in \bN$ and $0 \leq r < \tau(P)$.  If
\begin{enumerate}
\item $\fA$ has strong comparison of projections with respect to $\tau$ and $\ell = k-1$, or
\item $\fA$ has strict comparison of projections with respect to $\tau$, $r \neq 0$, and $\ell = k-1$, or \label{case:strict}
\item $\fA$ has strict comparison of projections with respect to $\tau$, and $\ell = k-2$, \label{case:strict-one-less}
\end{enumerate}
then there exists pairwise orthogonal subprojections $\{P_j\}^{\ell}_{j=1}$ of $I_\fA - P$ such that $\{P\} \cup \{P_j\}^{\ell}_{j=1}$ are equivalent in $\fA$.
\end{lem}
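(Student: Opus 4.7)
The plan is to construct the projections $P_1, P_2, \ldots, P_\ell$ recursively, at each stage producing a subprojection of what remains outside $P \cup P_1 \cup \cdots \cup P_{j-1}$ that is equivalent to $P$. The entire argument rests on comparison of projections applied to the appropriate trace inequality at each step.

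More concretely, set $R_0 := I_\fA - P$ and, once $P_1, \ldots, P_{j-1}$ have been produced as pairwise orthogonal subprojections of $I_\fA - P$ each equivalent to $P$, set
\[
R_j := I_\fA - P - \sum_{i=1}^{j-1} P_i,
\]
so that $\tau(R_j) = 1 - j\tau(P)$. I would then invoke the appropriate comparison hypothesis to conclude $P \lesssim R_j$, producing a subprojection $P_j \leq R_j$ with $P_j \sim P$. By construction, $P_j$ is orthogonal to $P$ and to $P_1, \ldots, P_{j-1}$, so the induction proceeds.

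What remains is to verify, in each of the three cases, that the trace inequality needed to apply comparison does hold, using $1 = k\tau(P) + r$ with $0 \leq r < \tau(P)$ and $\tau(P) > 0$ (faithfulness of $\tau$ combined with $\tau(P) > 0$). For case (a) one needs $\tau(P) \leq \tau(R_j)$ for $j = 1, \ldots, k-1$, i.e.\ $(j+1)\tau(P) \leq 1$, which follows from $(j+1)\tau(P) \leq k\tau(P) = 1 - r \leq 1$. For case (b) one needs $\tau(P) < \tau(R_j)$ for $j = 1, \ldots, k-1$, and here $(j+1)\tau(P) \leq k\tau(P) = 1 - r < 1$ since $r > 0$; this is precisely why the hypothesis $r \neq 0$ is required. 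For case (c) one needs $\tau(P) < \tau(R_j)$ only for $j = 1, \ldots, k-2$, and $(j+1)\tau(P) \leq (k-1)\tau(P) = 1 - r - \tau(P) < 1$ since $\tau(P) > 0$, so no assumption on $r$ is needed — the price is dropping one projection from the chain.

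The argument is essentially bookkeeping, so there is no serious obstacle; the only real subtlety is being careful about the split between strong and strict comparison and why the loss of one projection in case (c) is exactly what allows us to drop the hypothesis $r \neq 0$.
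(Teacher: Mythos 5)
Your proof is correct and follows the same recursive construction as the paper: at each stage compare $\tau(P)$ with the trace of the remaining projection $R_j$ and peel off a subprojection equivalent to $P$, with the three cases distinguished by exactly the trace inequalities you verify. You are in fact slightly more explicit than the paper about the arithmetic at each stage and about why dropping one projection in case (c) compensates for allowing $r=0$, but the argument is the same.
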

\begin{proof}
Notice $\tau(I_\fA - P) = (k-1) \tau(P) + r$.  Since $k\geq 2$, $\tau(P) \leq \tau(I_\fA - P)$ with strict inequality when $r \neq 0$.  Therefore, by assumptions, there exists a subprojection $P_1$ of $I_\fA - P$ such that $P_1 \sim P$.  If $k \geq 3$ (and $\ell \geq 2$), there exists a subprojection $P_2$ of $I_\fA - P - P_1$ such that $P_2 \sim P$.  By repeating this argument, we obtain pairwise orthogonal subprojections $\{P_j\}^{\ell}_{j=1}$ of $I_\fA - P$ such that $P_j \sim P$ for all $j$.  As Murray-von Neumann equivalence is an equivalence relation, the result follows.
\end{proof}

We now divide the prove of Theorem \ref{thm:scalars-in-convex-hulls} for $T$ with two point spectra into two parts: Lemma \ref{lem:two-point-spectrum-strong} proves the result when $\fA$ has strong comparison of projections, and Lemma \ref{lem:two-point-spectrum-strict} will modify the argument to obtain the result in the other case.  In that which follows, $\diag(a_1, \ldots, a_n)$ denotes the diagonal $n \times n$ matrix with diagonal entries $a_1, \ldots, a_n$.

\begin{lem}
\label{lem:two-point-spectrum-strong}
Let $\fA$ be a unital C$^*$-algebra with real rank zero that has strong comparison of projections with respect to a faithful tracial state $\tau$.  If $P \in \fA$ is a projection, $a,b \in \bR$, and $T = a P + b (I_\fA - P)$, then $\tau(T) I_\fA \in \cconv(\U(T))$.
\end{lem}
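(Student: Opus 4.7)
The plan is to build $\tau(T)I_\fA$ by an iterative averaging procedure over finite-dimensional matrix subalgebras of $\fA$ produced from $P$ via Lemma \ref{lem:constructing-matrix-algebras}. We may assume $a \neq b$, else $T = \tau(T)I_\fA$ already, and we may assume $\tau(P) \in (0, \tfrac{1}{2}]$: if $\tau(P) \in \{0, 1\}$ then $T$ is already scalar (as $\tau$ is faithful), and if $\tau(P) > \tfrac{1}{2}$ we replace $P$ by $I_\fA - P$ and swap $a, b$. By Lemma \ref{lem:division-algorithm}, write $1 = k\tau(P) + r$ with $k \geq 2$ and $0 \leq r < \tau(P)$, and by Lemma \ref{lem:constructing-matrix-algebras} choose pairwise orthogonal subprojections $P_1, \ldots, P_{k-1}$ of $I_\fA - P$ each equivalent to $P_0 := P$. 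Set $Q := I_\fA - \sum_{j=0}^{k-1} P_j$; then $\tau(Q) = r$, and $Q = 0$ precisely when $r = 0$.

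The partial isometries implementing $P_j \sim P_0$ generate a unital copy of $\M_k(\bC)$ inside $(I_\fA - Q)\fA(I_\fA - Q)$ with diagonal matrix units $P_0, \ldots, P_{k-1}$; under this embedding, $T(I_\fA - Q) = aP_0 + b(P_1 + \cdots + P_{k-1})$ corresponds to $\diag(a, b, \ldots, b)$. By Example \ref{exam:majorization-self-adjoint-matrices} and part (\ref{part:convex}) of Theorem \ref{thm:majorization-in-factors} applied to $\M_k(\bC)$, the scalar $c(I_\fA - Q)$ with $c := (a + (k-1)b)/k$ lies in the closed convex hull of the unitary orbit of this diagonal inside $\M_k(\bC)$. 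Since extending any such matrix unitary $u$ by $Q$ gives a unitary $u + Q \in \fA$ that fixes $TQ = bQ$, we conclude
\[
S_1 \ := \ c(I_\fA - Q) + bQ \ = \ cI_\fA + (b-c)Q \ \in \ \cconv(\U_\fA(T)).
\]
Then $S_1$ is self-adjoint with $\tau(S_1) = \tau(T)$ and spectrum $\{c, b\}$ of spread $|c - b| = |a - b|/k \leq |a-b|/2$.

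If $r = 0$ then $Q = 0$ and $S_1 = cI_\fA = \tau(T)I_\fA$, ending the argument. Otherwise $\tau(Q) = r \in (0, \tfrac{1}{2})$, and we iterate the construction with $(S_1, Q)$ in place of $(T, P)$; by Lemma \ref{lem:convex-hull-transitive} every resulting operator stays in $\cconv(\U(T))$. After $n$ iterations we obtain $S_n \in \cconv(\U(T))$ with two-point spectrum, trace $\tau(T)$, and spectral spread at most $|a - b|/2^n$; since $\tau(T)$ lies in the convex hull of any such two-point spectrum, $\|S_n - \tau(T)I_\fA\| \leq |a-b|/2^n \to 0$, and hence $\tau(T)I_\fA \in \cconv(\U(T))$.

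The main obstacle is managing the iteration cleanly: the matrix subalgebras embedded at each stage are typically non-unital (this is exactly why the remainder $Q$ reappears in the argument), and one must verify at each stage that the new remainder projection still has trace in $(0, \tfrac{1}{2}]$ so that Lemmas \ref{lem:division-algorithm} and \ref{lem:constructing-matrix-algebras} remain applicable. The geometric $1/2^n$ rate relies only on the uniform lower bound $k_n \geq 2$ on the division-algorithm quotients given by Lemma \ref{lem:division-algorithm}, so no finer control of the remainders is needed.
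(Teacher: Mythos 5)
Your proof is correct and follows the paper's overall strategy: iteratively construct (generally non-unital) matrix subalgebras from $P$ via Lemma \ref{lem:division-algorithm} and Lemma \ref{lem:constructing-matrix-algebras}, average the restriction of the operator there using part (\ref{part:convex}) of Theorem \ref{thm:majorization-in-factors}, patch back the remainder block, and iterate on the shrinking remainder projection. Where you genuinely diverge from the paper is in the convergence argument. The paper keeps two scalar sequences $(a_n)$ and $(b_n)$ corresponding to a ``back-and-forth'' relabeling of the two spectral values, shows both are monotone and bounded, and then splits into cases depending on whether the sequence of division-algorithm quotients $(k_n)$ is bounded or unbounded to conclude $\lim a_n = \lim b_n$. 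You instead observe that a single pass replaces the spread $|a-b|$ by $|a-b|/k \le |a-b|/2$, since Lemma \ref{lem:division-algorithm} forces every quotient to satisfy $k_n \ge 2$, so the spectral spread of $S_n$ decays geometrically to zero regardless of how $(k_n)$ behaves; combined with $\tau(S_n) = \tau(T)$ lying between the two eigenvalues of $S_n$, this gives $\|S_n - \tau(T)I_\fA\| \le |a-b|/2^n$ directly. This is a cleaner closing argument: it eliminates the monotonicity bookkeeping, the relabeling, and the bounded/unbounded dichotomy, at no cost in generality. One small point worth making explicit if you polish this: Lemma \ref{lem:division-algorithm} actually gives the remainder trace $r < \frac{1}{k+1} \le \frac{1}{3}$, so the invariant $\tau(\text{remainder}) \in [0,\tfrac{1}{2})$ is automatic at every subsequent stage and never needs re-checking.
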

\begin{proof}
By interchanging $P$ and $I_\fA - P$, we may assume that $\tau(P) \leq \frac{1}{2}$.  Let $r_0 = \tau(P)$ and write $1 = k_1 r_0 + r_1$ where $k_1 \in \bN$, $k_1 \geq 2$, and $0 \leq r_1 \leq \min\{r_0, \frac{1}{k_1+1}\} < \frac{1}{2}$.  By Lemma \ref{lem:constructing-matrix-algebras} there pairwise orthogonal subprojections $\{Q_j\}^{k_1-1}_{j=1}$ of $I_\fA - P$ such that $\{P\} \cup \{Q_j\}^{k-1}_{j=1}$ are equivalent in $\fA$.  Let $P_1 = I_\fA - P - \sum^{k_1-1}_{j=1} Q_j$.  Using the equivalence of $\{P\} \cup \{Q_j\}^{k-1}_{j=1}$, a copy of $\M_{k_1}(\bC)$ may be constructed  in $\fA$ with unit $I_\fA - P_1$.  Using this matrix subalgebra, $T$ can be viewed as the operator
\[
T = \diag(a, b, \ldots, b) \oplus b P_1 \in \M_{k_1}(\bC) \oplus P_1 \fA P_1 \subseteq \fA.
\]
Since any self-adjoint matrix majorizes its normalized trace (see Lemma \ref{lem:easy-direction-for-convex-hulls}), we obtain by Theorem \ref{thm:majorization-in-factors} that
\[
\frac{a + (k_1-1)b}{k_1} I_{k_1}\in \cconv(\U(\diag(a, b, \ldots, b)))
\]
where the unitary orbit is computed in $\M_{k_1}(\bC)$.  Therefore, if $a_1 = \frac{a + (k_1-1)b}{k_1}$, we obtain by using a direct sum argument that
\[
T_1 := a_1(I_\fA - P_1) + bP_1 \in \cconv(\U(T)).
\]

Notice $\tau(P_1) = r_1$.  If $r_1 = 0$, the proof is complete (as $\tau(T_1) = \tau(T)$).  Otherwise, by writing $1 = k_2 r_1 + r_2$ where $k_2 \in \bN$, $k_2 \geq k_1$, and $0 \leq r_2 \leq \min\{r_1, \frac{1}{k_2+1}\}$, and by repeating the above argument, there exists a projection $P_2 \in \fA$ such that $\tau(P_2) = r_2$ and
\[
T_2 :=  a_1 P_2 + \frac{b + (k_2-1)a_1}{k_2}(I_\fA - P_2) \in  \cconv(\U(T_1)) \subseteq \cconv(\U(T)).
\]
Notice if $r_2 = 0$, the proof is again complete.

Repeat the above process ad infinitum.  Notice that the proof is complete if the process ever terminates via a zero remainder.  As such, we may assume that we have found a non-decreasing sequence $(k_n)_{n\geq 1} \subseteq \bN$ with $k_1 \geq 2$, a sequence $(r_n)_{n\geq 1} \subseteq \left(0, \frac{1}{2}\right]$ with $1 = k_{n+1} r_n + r_{n+1}$, projections $\{P_n\}_{n\geq 1} \subseteq \fA$ with $\tau(P_n) = r_n$, sequences $(a_n)_{n\geq 1}, (b_n)_{n\geq 1} \subseteq \bR$ such that
\[
a_{n+1} = \frac{a_n + (k_{2n+1} - 1)b_n}{k_{2n+1}} \qqand b_{n+1} = \frac{b_n + (k_{2n+2} - 1)a_{n+1}}{k_{2n+2}},
\] 
and operators
\[
T_{2n} = a_n P_{2n} + b_n (I_\fA - P_{2n}) \qqand T_{2n+1} = b_n P_{2n+1} + a_{n+1} (I_\fA - P_{2n+1})
\]
such that $T_n \in \cconv(\U(T))$ for all $n$.  

If $a \leq b$, it is elementary to verify that
\[
a \leq a_1 \leq a_2 \leq \cdots \leq b_2 \leq b_1 \leq b.
\]
Similarly, if $b \leq a$, then
\[
b \leq b_1 \leq b_2 \leq \cdots \leq a_2 \leq a_1 \leq a.
\]
As a result, $(a_n)_{n\geq 1}$ and $(b_n)_{n\geq 1}$ are bounded monotone sequence of $\bR$ and thus converge.  Let
\[
a' = \lim_{n\to \infty} a_n \qqand b' = \lim_{n\to \infty} b_n.
\]
If the non-decreasing sequence $(k_n)_{n\geq 1}$ is bounded, using the fact that $k_1 \geq 2$ and the relations between $a_n$ and $b_n$, we obtain $a' = b'$.  If $(k_n)_{n\geq 1}$ is unbounded, then by using the fact that
\[
\lim_{m\to \infty} \left| c - \frac{c + md}{m+1}\right| = |c-d|
\]
we again obtain $a' = b'$.

Let $\epsilon > 0$ and choose $n$ such that $|a_n - a'| < \epsilon$ and $|b_n - a'| < \epsilon$.  Then $\left\|T_{2n} - a' I_\fA\right\| < \epsilon$ so
\[
\dist\left(a' I_\fA, \cconv(\U(T))\right) \leq \epsilon.
\]
Hence $a' I_\fA \in \cconv(\U(T))$.  Since every element of $\cconv(\U(T))$ has trace equal to $\tau(T)$, we obtain $a' = \tau(T)$ thereby completing the result.
\end{proof}

\begin{lem}
\label{lem:two-point-spectrum-strict}
Let $\fA$ be a unital C$^*$-algebra with real rank zero and property (\ref{ass:strict}) of Theorem \ref{thm:scalars-in-convex-hulls} with respect to a faithful tracial state $\tau$.  If $P \in \fA$ is a projection, $a,b \in \bR$, and $T = a P + b (I_\fA - P)$, then $\tau(T) I_\fA \in \cconv(\U(T))$.
\end{lem}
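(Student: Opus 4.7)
My approach is to mirror the proof of Lemma \ref{lem:two-point-spectrum-strong}, running the same iterative construction but substituting cases (\ref{case:strict}) and (\ref{case:strict-one-less}) of Lemma \ref{lem:constructing-matrix-algebras} for case (\ref{ass:strong}) as appropriate. After interchanging $P$ and $I_\fA - P$, I may assume $\tau(P) \le \tfrac{1}{2}$.

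Assume first that $\tau(P) < \tfrac{1}{2}$. At each step I have $T_n = a_n(I_\fA - P_n) + b_n P_n$ with $\tau(P_n) = r_n$, and write $1 = k_{n+1} r_n + r_{n+1}$. If $r_{n+1} \neq 0$, case (\ref{case:strict}) of Lemma \ref{lem:constructing-matrix-algebras} supplies the same rank-$k_{n+1}$ matrix subalgebra used in Lemma \ref{lem:two-point-spectrum-strong}, so the analysis there carries over verbatim. If $r_{n+1} = 0$, i.e.\ $r_n = 1/k_{n+1}$, the assumption $r_n < \tfrac12$ forces $k_{n+1} \geq 3$; case (\ref{case:strict-one-less}) then provides a rank-$(k_{n+1}-1)$ matrix subalgebra, and averaging $\diag(b_n, a_n, \ldots, a_n)$ in that subalgebra (via Theorem \ref{thm:majorization-in-factors} applied to $\M_{k_{n+1}-1}(\bC)$) produces a new $T_{n+1} \in \cconv(\U(T))$. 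A direct calculation with $\alpha_n := a_n - \tau(T)$ and $\beta_n := b_n - \tau(T)$, using $\tau(T_{n+1}) = \tau(T)$, gives
\[
\|T_{n+1} - \tau(T) I_\fA\| \;\leq\; \tfrac{1}{k_{n+1}-1}\|T_n - \tau(T) I_\fA\| \;\leq\; \tfrac12 \|T_n - \tau(T) I_\fA\|.
\]
The monotone-sequence argument of Lemma \ref{lem:two-point-spectrum-strong} handles the case (\ref{case:strict}) steps exactly as before, while the case (\ref{case:strict-one-less}) steps contract the norm geometrically, so $T_n \to \tau(T) I_\fA$ in norm and $\tau(T) I_\fA \in \cconv(\U(T))$.

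The remaining possibility is $\tau(P) = \tfrac12$, where case (\ref{case:strict-one-less}) degenerates ($\ell = 0$) and no reduction is available. Here I exploit the arbitrarily-small-projections half of hypothesis (\ref{ass:strict}). The corner $P \fA P$ inherits hypothesis (\ref{ass:strict}) with respect to $\tau_P$ (strict comparison is inherited directly, and a projection in $\fA$ of trace strictly less than $\tau(P)$ is subequivalent to $P$ by strict comparison, hence can be taken inside $P$); combined with real rank zero this produces a projection $P_1 \leq P$ with $0 < \tau(P_1) < \tfrac12$. The corner $(I_\fA - P_1)\fA(I_\fA - P_1)$ inherits every hypothesis of the lemma, and the restriction $T|_{(I_\fA - P_1)\fA(I_\fA - P_1)} = a(P - P_1) + b(I_\fA - P)$ is a two-point operator whose smaller spectral projection has trace
\[
\tau_{I_\fA - P_1}(P - P_1) \;=\; \frac{\tfrac12 - \tau(P_1)}{1 - \tau(P_1)} \;<\; \tfrac12.
\]
The first half of the proof therefore applies inside this corner, and extending each corner-unitary by the identity on $P_1$ lifts the resulting membership to $T' := a P_1 + c(I_\fA - P_1) \in \cconv(\U(T))$, where $c$ is the rescaled trace of the restriction. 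Since $\tau(P_1) < \tfrac12$, the first half of the proof applies a second time to $T'$; combined with $\tau(T') = \tau(T)$ and Lemma \ref{lem:convex-hull-transitive}, this yields $\tau(T) I_\fA \in \cconv(\U(T))$.

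The decisive obstacle is precisely the boundary value $\tau(P) = \tfrac12$: strict comparison gives no unitary exchanging $P$ with $I_\fA - P$, and naive partial-swap averaging leaves an irreducible residual of norm $|a-b|/2$. The role of arbitrarily small projections is to split $P$ non-trivially so that the two-step corner reduction above returns the problem to the cases the iteration can handle.
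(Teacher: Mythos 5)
Your proof is correct and follows essentially the same strategy as the paper: running the iteration of Lemma \ref{lem:two-point-spectrum-strong} using cases (\ref{case:strict}) and (\ref{case:strict-one-less}) of Lemma \ref{lem:constructing-matrix-algebras}, and handling $\tau(P) = \frac{1}{2}$ by a corner reduction that exploits arbitrarily small projections. Your explicit geometric-contraction bound $\|T_{n+1} - \tau(T)I_\fA\| \le \frac{1}{k_{n+1}-1}\|T_n - \tau(T)I_\fA\|$ for the case-(\ref{case:strict-one-less}) steps, and your choice of corner ($(I_\fA - P_1)\fA(I_\fA - P_1)$ with $P_1 \le P$, rather than the paper's $(P + P_0)\fA(P + P_0)$ with $P_0 \le I_\fA - P$), are minor variations on the same argument.
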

\begin{proof}
Notice, by case (\ref{case:strict}) of Lemma \ref{lem:constructing-matrix-algebras}, that the recursive algorithm in the proof of Lemma \ref{lem:two-point-spectrum-strong} works at the $n^{\mathrm{th}}$ stage in this setting provided $r_n \neq 0$. Therefore, if $r_n \neq 0$ for all $n \in \bN$, the proof is complete.  Otherwise, if $n$ is the first number in the algorithm for which $r_n = 0$, notice $r_{n-1} = \frac{1}{k_n}$.  Thus it suffices to prove the result in the case that $\tau(P) = \frac{1}{k}$ for some $k \in \bN$ with $k \geq 2$.

If $k \geq 3$, we can apply the algorithm in Lemma \ref{lem:two-point-spectrum-strong} by viewing the remainders as being $\frac{1}{k}$ instead of zero.  Indeed the proof of Lemma \ref{lem:two-point-spectrum-strong} may be adapted using case (\ref{case:strict-one-less}) instead of case (\ref{case:strict}) of Lemma \ref{lem:constructing-matrix-algebras} to construct $(k_n-1) \times (k_n - 1)$ matrix algebras (instead of $k_n \times k_n$) and by using the new scalars
\[
a_{n+1} = \frac{a_n + (k_{2n+1} - 2)b_n}{k_{2n+1}-1} \qqand b_{n+1} = \frac{b_n + (k_{2n+2} - 2)a_{n+1}}{k_{2n+2}-1}.
\] 
The remainder of the proof then follows as in Lemma \ref{lem:two-point-spectrum-strong}.  Thus it remains to prove the result in the case $\tau(P) = \frac{1}{2}$.

Since $\fA$ has property (\ref{ass:strict}), there exists a projection $P_0 \leq I_\fA - P$ with $\tau(P_0) < \frac{1}{2}$.  Consider
\[
T_0 = aP + b P_0 \in (P + P_0) \fA (P + P_0).
\]
As $(P + P_0) \fA (P + P_0)$ satisfies the assumptions of this lemma and since 
\[
\tau_{(P + P_0)}(P) = \frac{1}{\tau(P+P_0)} \tau(P) \neq \frac{1}{2},
\]
the above cases imply there exists $\alpha_0 \in \bR$ such that $\alpha_0 (P + P_0) \in \cconv(\U(T_0))$ where $\cconv(\U(T_0))$ is computed in $(P + P_0) \fA (P + P_0)$.  Consequently
\[
\alpha_0 (P + P_0) + b (I_\fA - P - P_0) \in \cconv(\U(T))
\]
by a direct sum argument.  As $\tau(P + P_0) \neq \frac{1}{2}$, the above cases imply there exists $\alpha \in \bR$ such that $\alpha I_\fA\in \cconv(\U(T))$.  As every element of $\cconv(\U(T))$ has trace $\tau(T)$, $\alpha = \tau(T)$ completing the result.
\end{proof}

\begin{lem}
\label{lem:trace-for-finite-spectrum}
Let $\fA$ and $\tau$ be as in the hypotheses of Theorem \ref{thm:scalars-in-convex-hulls}.  If $T \in \fA$ is a self-adjoint operator with finite spectrum, then $\tau(T) I_\fA \in \cconv(\U(T))$.
\end{lem}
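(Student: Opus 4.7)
The plan is to proceed by induction on the cardinality $n$ of the spectrum of $T$. The base cases $n=1$ (where $T = \tau(T)I_\fA$ and the conclusion is trivial) and $n=2$ (Lemmas \ref{lem:two-point-spectrum-strong} and \ref{lem:two-point-spectrum-strict}) have already been handled, so assume the result for self-adjoint operators with at most $n-1$ points in their spectrum, and write $T = \sum_{k=1}^n \lambda_k P_k$ with the $\lambda_k$ distinct real numbers and $\{P_k\}_{k=1}^n$ nonzero pairwise orthogonal projections summing to $I_\fA$.

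The idea is to use the inductive hypothesis to collapse the contribution of $P_2, \ldots, P_n$ to a single scalar on the complementary projection, reducing to the two-point case. To this end, consider the corner $\fB := (I_\fA - P_1)\fA(I_\fA - P_1)$ equipped with its normalized induced tracial state $\tau_{I_\fA - P_1}$. By the remark following Theorem \ref{thm:scalars-in-convex-hulls}, $\fB$ inherits the hypotheses of that theorem from $\fA$ (in case (\ref{ass:strict}), one verifies that arbitrarily small projections persist in $\fB$ by using strict comparison in $\fA$ to embed sufficiently small nonzero projections of $\fA$ into $I_\fA - P_1$). The self-adjoint operator $S := \sum_{k=2}^n \lambda_k P_k \in \fB$ has at most $n-1$ points in its spectrum, so the inductive hypothesis applied in $\fB$ provides
\[
\mu(I_\fA - P_1) \in \cconv(\U(S)), \qquad \mu := \frac{\tau(S)}{\tau(I_\fA - P_1)},
\]
where the closed convex hull and unitary orbit are computed inside $\fB$. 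Since every unitary $U \in \fB$ extends to the unitary $U + P_1 \in \fA$, which fixes $P_1$ and acts as $U$ on $\fB$, averaging such extensions and passing to norm limits inside $\fA$ yields
\[
\lambda_1 P_1 + \mu(I_\fA - P_1) \in \cconv(\U(T)).
\]

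The operator on the left has at most two distinct values in its spectrum, so Lemma \ref{lem:two-point-spectrum-strong} or Lemma \ref{lem:two-point-spectrum-strict}, applied in $\fA$, produces a scalar $\alpha I_\fA$ in its closed convex hull of unitaries; by Lemma \ref{lem:easy-direction-for-convex-hulls} this $\alpha$ must equal $\tau(\lambda_1 P_1 + \mu(I_\fA - P_1)) = \lambda_1 \tau(P_1) + \tau(S) = \tau(T)$. An application of Lemma \ref{lem:convex-hull-transitive} then closes the induction. The substantive work was already carried out in the two-point lemmas; the only remaining point to check is that the compression $\fB$ satisfies the hypotheses of Theorem \ref{thm:scalars-in-convex-hulls}, which is a routine verification.
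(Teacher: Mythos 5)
Your proposal is correct and takes essentially the same route as the paper: both reduce to the two-point lemmas via the compression remark, a direct-sum argument, and transitivity of closed convex hulls. The paper phrases it as iteratively applying the two-point lemma to $\alpha_1 P_1 + \alpha_2 P_2$ in $(P_1+P_2)\fA(P_1+P_2)$ and repeating $n-1$ times, whereas you set it up as a formal induction collapsing $P_2,\ldots,P_n$ inside $(I_\fA - P_1)\fA(I_\fA - P_1)$ and then finishing with one application of the two-point lemma; these are the same idea reorganized.
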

\begin{proof}
By assumption there exist pairwise orthogonal non-zero projections $\{P_k\}^n_{k=1}$ and scalars $\{\alpha_k\}^n_{k=1} \subseteq \bR$ such that $T = \sum^n_{k=1} \alpha_k P_k$.  By applying Lemma \ref{lem:two-point-spectrum-strong} or \ref{lem:two-point-spectrum-strict} to $\alpha_1 P_1 + \alpha_2 P_2$ in $(P_1 +P_2) \fA (P_1 + P_2)$ and by appealing to a direct sum argument, there exists a $\beta_0 \in \bR$ such that
\[
\beta_0 (P_1 + P_2) + \sum^n_{k=3} \alpha_k P_k \in \cconv(\U(T)).
\]
By iterating this argument another $n - 2$ times, there exists a $\beta \in \bR$ such that $\beta I_\fA \in \cconv(\U(T))$.  As every element of $\cconv(\U(T))$ has trace $\tau(T)$, $\beta = \tau(T)$ completing the result.
\end{proof}

\begin{proof}[Proof of Theorem \ref{thm:scalars-in-convex-hulls}]
Let $T \in \fA$ be self-adjoint.  Let $\epsilon > 0$.  Since $\fA$ has real rank zero, there exists a self-adjoint operator $T_0 \in \fA$ with finite spectrum such that $\left\|T - T_0\right\| < \epsilon$.  Notice this implies $\dist\left(R, \cconv(\U(T))\right) \leq \epsilon$ for all $R \in \cconv(\U(T_0))$.

By Lemma \ref{lem:trace-for-finite-spectrum}, $\tau(T_0) I_\fA \in \cconv(\U(T_0))$.  Since $|\tau(T_0) - \tau(T)| < \epsilon$, we obtain
\[
\dist\left(\tau(T) I_\fA, \cconv(\U(T))\right) < 2\epsilon.
\]
As $\epsilon$ was arbitrary, the result follows.
\end{proof}

\begin{rem}
\label{rem:short-proof-of-simple}
Using the above ideas, there is a simple proof that an infinite dimensional C$^*$-algebra satisfying the assumptions of Theorem \ref{thm:scalars-in-convex-hulls} must be simple.  Indeed suppose $\fA$ is such a C$^*$-algebra and $\I$ is a non-zero ideal.  Note $\I$ is hereditary and thus has real rank zero as hereditary C$^*$-subalgebras of $\fA$ have real rank zero (see \cite{BP1991}*{Corollary 2.8}).  Thus the unitization of $\I$ contains a non-zero projection and thus $\I$ contains a non-zero projection. 

Note the set of projections contained in $\I$ is closed under taking subprojections (as $\I$ is hereditary) and is closed under Murray-von Neumann equivalence (as $\I$ is an ideal).  Therefore, by part (\ref{case:strict-one-less}) of Lemma \ref{lem:constructing-matrix-algebras}, there exists a projection $P \in \I$ with $\tau(P) \geq \frac{1}{2}$.

If $\tau(P) = \frac{1}{2}$, choose a non-zero projection $P' \leq P$ with $\tau(P') < \frac{1}{2}$ and a subprojection $Q$ of $I_\fA - P$ with $\tau(Q) = \tau(P')$ such that $Q \sim P'$.  Hence $Q \in \I$ so $P + Q \in \I$.  As $\tau(P + Q) > \frac{1}{2}$, we have reduced to the case $\tau(P) > \frac{1}{2}$.

If $\tau(P) > \frac{1}{2}$, then $I_\fA - P$ is equivalent to a subprojection of $P$ and thus $I_\fA - P \in \I$.  Since $P \in \I$, this implies $I_\fA \in \I$ so $\I = \fA$. 
\end{rem}

\section{Convex Hulls of Unitary Orbits}
\label{sec:Convex-Hulls-Of-Unitary-Orbits}

In this section, we will demonstrate the following theorem which characterizes $\cconv(\U(T))$ for self-adjoint $T$ in various C$^*$-algebras using the notion of majorization.

\begin{thm}
\label{thm:classification-of-convex-hull}
Let $\fA$ be a unital C$^*$-algebra with real rank zero that has strong comparison of projections with respect to a faithful tracial state $\tau$.   If $T \in \fA$ is self-adjoint, then
\[
\cconv(\U(T)) = \{S \in \fA \, \mid \, S^* = S, S \prec_\tau T\}.
\]
\end{thm}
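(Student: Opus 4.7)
The containment $\cconv(\U(T)) \subseteq \{S \in \fA : S^* = S,\, S \prec_\tau T\}$ is already given by Lemma \ref{lem:easy-direction-for-convex-hulls}, so the plan is to establish the reverse inclusion: given a self-adjoint $S \in \fA$ with $S \prec_\tau T$, I want to show $S \in \cconv(\U(T))$.

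My first move would be to reduce to the case where both $S$ and $T$ have finite spectrum. Using real rank zero, $T$ can be approximated in norm by an operator $T'$ with finite spectrum, and part (\ref{part:difference}) of Theorem \ref{thm:properties-of-eigenvalue-functions} controls how much the eigenvalue function can move. A symmetric finite-spectrum approximation of $S$, combined with small trace-correcting shifts of the form $T' + \delta I_\fA$, should produce finite-spectrum operators $S', T'$ with $S' \prec_\tau T'$ and $\|S - S'\|$, $\|T - T'\|$ both arbitrarily small. Since conjugation by unitaries is isometric, $\cconv(\U(\cdot))$ depends Hausdorff-continuously on the operator, so the general case follows once the finite-spectrum case is handled.

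In the finite-spectrum case, write $T = \sum_k \alpha_k P_k$ and $S = \sum_j \beta_j Q_j$ with strictly decreasing $\alpha_k, \beta_j$. Forming the common refinement of the partitions of $[0,1]$ associated with $\lambda^\tau_T$ and $\lambda^\tau_S$ produces lengths $\ell_1, \ldots, \ell_N$ with $\sum_l \ell_l = 1$ and constants $\tilde\alpha_l, \tilde\beta_l$. Using real rank zero and strong comparison, I would build pairwise orthogonal projections $\{F_l\}_{l=1}^N$ with $\tau(F_l) = \ell_l$ refining $\{P_k\}$ so that $T = \sum_l \tilde\alpha_l F_l$, and a second family $\{F_l'\}$ refining $\{Q_j\}$ with $S = \sum_l \tilde\beta_l F_l'$. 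Strong comparison then yields a unitary $U \in \fA$ with $U F_l U^* = F_l'$; after replacing $T$ by $UTU^*$, both $S$ and $T$ live in the abelian C$^*$-subalgebra generated by $\{F_l'\}$. Approximating each $\ell_l$ by a rational with common denominator $M$ and using strong comparison to subdivide each $F_l'$ into pairwise equivalent projections of trace $1/M$ embeds the situation inside a matrix subalgebra $\M_M(\bC) \subseteq \fA$ in which $S$ and $T$ both become diagonal. The matrix case of Theorem \ref{thm:majorization-in-factors} then gives $S \in \conv(\U_{\M_M(\bC)}(T)) \subseteq \conv(\U(T))$, and taking limits absorbs the rational approximations.

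The main obstacle is this last construction: producing a common refinement of two unrelated spectral decompositions, aligning them by a unitary, and subdividing into equal-trace projections, all with uniform error control so that the matrix-majorization step survives passage to the various limits. Each individual move is routine under strong comparison plus real rank zero, but the bookkeeping is delicate, and care must be taken that the finite-spectrum reduction in the first paragraph is compatible with the refinement procedure in the second. Once the matrix-subalgebra picture is in place, the remainder is a direct application of the finite-dimensional majorization theorem.
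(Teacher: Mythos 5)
The easy inclusion and the idea of reducing to finite spectrum are on the right track, and your construction of the common-refinement projections $\{F_l\}$ and $\{F_l'\}$ with $\tau(F_l)=\tau(F_l')$ is essentially the paper's Lemma \ref{lem:getting-equal-trace-projections}. But there is a genuine gap in the next step.

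You propose to subdivide each $F_l'$ into pairwise equivalent projections of trace $1/M$ so as to embed both $S$ and $UTU^*$ into a matrix subalgebra $\M_M(\bC)\subseteq\fA$ and invoke finite-dimensional majorization. The hypotheses do not guarantee that $\fA$ contains any projection of trace $1/M$. Strong comparison only says that $\tau(P)\le\tau(Q)$ forces $P\lesssim Q$; it says nothing about which values in $[0,1]$ actually arise as traces of projections. In an irrational rotation algebra $A_\theta$, for instance, the projection traces lie in $(\mathbb{Z}+\mathbb{Z}\theta)\cap[0,1]$, so there is no projection of trace $\tfrac12$. Consequently there is no system of $M$ equivalent orthogonal projections summing to $I_\fA$, and the required unital copy of $\M_M(\bC)$ does not exist. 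Relaxing to a non-unital copy (so the $M$ equivalent projections only fill out some $Q<I_\fA$ with $\tau(I_\fA-Q)$ small) does not help: the leftover projection still has norm $1$, so $T$ and $S$ are not close in \emph{norm} to operators supported in $\M_M(\bC)\oplus (I_\fA-Q)\fA(I_\fA-Q)$ in a way that preserves the majorization you need. This is exactly the obstruction the paper is built around: it never constructs a large matrix algebra at once, but instead performs iterated $2\times2$ pinchings (Lemma \ref{lem:pinching}), which require only $P\lesssim I_\fA-P$ or the reverse — always available under strong comparison — together with Theorem \ref{thm:scalars-in-convex-hulls}, whose back-and-forth division-algorithm argument is what handles the irrational-trace bookkeeping your approach elides.

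A secondary, smaller gap: your finite-spectrum reduction via a trace-correcting shift $T'+\delta I_\fA$ fixes the equality $\tau(S')=\tau(T')$ but does not obviously restore the inequalities $\int_0^t\lambda^\tau_{S'}\le\int_0^t\lambda^\tau_{T'}$ for all $t$. The paper's Lemma \ref{lem:reduction-to-finite-spectrum} instead chooses the finite-spectrum approximants by averaging $\lambda^\tau_S$ and $\lambda^\tau_T$ over a common interval partition (cf.\ Example \ref{exam:averaging}); this simultaneously controls the norm error, gives $T'\prec_\tau T$ and $S'\prec_\tau S$, and, crucially, preserves $S'\prec_\tau T'$ whenever $S\prec_\tau T$.
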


Before proceeding, we briefly outline the approach to the proof.  First, we reduce to the case that $T$ and $S$ have finite spectrum.  This is done by showing $T$ and $S$ can be approximated by self-adjoint operators $T'$ and $S'$ such that $S' \prec_\tau T'$.  We then demonstrate a `pinching' on self-adjoint operators $T'$ with exactly two points in their spectrum to show that all convex combinations of $T'$ and $\tau(T')I_\fA$ are in $\cconv(\U(T'))$.  Appealing to a specific decomposition result and by progressively applying pinchings, the result is obtained. 

We begin with the decomposition result.

\begin{lem}
\label{lem:getting-equal-trace-projections}
Let $\fA$ and $\tau$ be as in Theorem \ref{thm:classification-of-convex-hull}.  Suppose $S, T \in \fA$ are self-adjoint operators with finite spectrum.  Then there exists two collections of pairwise orthogonal non-zero projections $\{P_k\}^n_{k=1}$ and $\{Q_k\}^n_{k=1}$ with 
\[
\sum^n_{k=1} P_k= \sum^n_{k=1} Q_k = I_\fA \qand
\tau(P_k) = \tau(Q_k) \text{ for all }k
\]
and scalars $\{\alpha_k\}^n_{k=1}, \{\beta_k\}^n_{k=1} \subseteq \bR$ with $\alpha_k \geq \alpha_{k+1}$ and $\beta_k \geq \beta_{k+1}$ such that
\[
T = \sum^n_{k=1} \alpha_k P_k \qqand S = \sum^n_{k=1} \beta_k Q_k.
\]
\end{lem}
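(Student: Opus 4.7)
The plan is to construct common refinements of the spectral decompositions of $T$ and $S$ whose pieces have matching traces. Begin by writing $T = \sum_{i=1}^m \alpha'_i P'_i$ and $S = \sum_{j=1}^{m'} \beta'_j Q'_j$, where $\alpha'_1 > \cdots > \alpha'_m$ and $\beta'_1 > \cdots > \beta'_{m'}$ are the distinct spectral values in decreasing order and $\{P'_i\}$, $\{Q'_j\}$ are pairwise orthogonal families of non-zero projections summing to $I_\fA$. Set $T_i = \sum_{i' \le i} \tau(P'_{i'})$ and $S_j = \sum_{j' \le j} \tau(Q'_{j'})$, and let $0 = u_0 < u_1 < \cdots < u_n = 1$ be the sorted union of $\{T_i\}_{i=0}^m$ and $\{S_j\}_{j=0}^{m'}$ with duplicates removed. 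For each $k \in \{1, \ldots, n\}$, let $i_k$ and $j_k$ be the unique indices with $[u_{k-1}, u_k] \subseteq [T_{i_k-1}, T_{i_k}] \cap [S_{j_k-1}, S_{j_k}]$; these sequences are non-decreasing in $k$ by construction.

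Next, I will inductively produce pairwise orthogonal projections $P_1, \ldots, P_n$ and $Q_1, \ldots, Q_n$ with $P_k \le P'_{i_k}$, $Q_k \le Q'_{j_k}$, and $\tau(P_k) = \tau(Q_k) = u_k - u_{k-1}$. At step $k$, the remaining portion $\hat P = P'_{i_k} - \sum_{\ell < k,\, i_\ell = i_k} P_\ell$ is a subprojection of $P'_{i_k}$ with trace $T_{i_k} - u_{k-1}$, and similarly $\hat Q = Q'_{j_k} - \sum_{\ell < k,\, j_\ell = j_k} Q_\ell$ has trace $S_{j_k} - u_{k-1}$. Since $u_k - u_{k-1} = \min(T_{i_k}, S_{j_k}) - u_{k-1} = \min(\tau(\hat P), \tau(\hat Q))$, one of $\hat P$, $\hat Q$ already has the required trace $u_k - u_{k-1}$; strong comparison of projections with respect to $\tau$ then furnishes an equivalent subprojection inside the other, and these are taken to be $P_k$ and $Q_k$. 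Faithfulness of $\tau$ together with $u_k - u_{k-1} > 0$ ensures $P_k, Q_k \neq 0$.

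Finally, set $\alpha_k = \alpha'_{i_k}$ and $\beta_k = \beta'_{j_k}$. At each step, the index $i_{k+1}$ strictly exceeds $i_k$ exactly when $\hat P$ was exhausted in step $k$, so the projections $\{P_k : i_k = i\}$ are orthogonal subprojections of $P'_i$ whose traces sum to $\tau(P'_i)$ and whose sum is $P'_i$; hence $\sum_k P_k = I_\fA$ and $T = \sum_k \alpha_k P_k$. Monotonicity $\alpha_k \geq \alpha_{k+1}$ follows because $(\alpha'_i)$ is strictly decreasing and $(i_k)$ is non-decreasing, and the analogous conclusions hold on the $S$-side. The chief technical obstacle is the existence at each step of subprojections of precisely the trace $u_k - u_{k-1}$, which is resolved by the choice of $u_k$: it forces $u_k - u_{k-1}$ to coincide with the trace of one of the two currently unused portions, reducing existence to a single direct application of strong comparison.
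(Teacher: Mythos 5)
Your proposal is correct and takes essentially the same approach as the paper: both refine the two spectral decompositions by repeatedly matching the smaller of the two "current" blocks (using strong comparison with respect to $\tau$ to split the larger one) until both decompositions share a common trace partition. Your presentation is a more formalized version of the same greedy refinement, organizing it around the merged partition points $u_0<\cdots<u_n$ rather than stating the recursion informally, but there is no substantive difference in the argument.
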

\begin{proof}
Since $T$ and $S$ have finite spectrum, there exists two collections of pairwise orthogonal non-zero projections $\{P'_k\}^m_{k=1}$ and $\{Q'_k\}^l_{k=1}$ with $\sum^m_{k=1} P_k= \sum^l_{k=1} Q_k = I_\fA$ and scalars $\{\alpha'_k\}^m_{k=1}, \{\beta_k\}^l_{k=1} \subseteq \bR$ with $\alpha'_k > \alpha'_{k+1}$ and $\beta'_k > \beta'_{k+1}$ such that
\[
T = \sum^m_{k=1} \alpha'_k P'_k \qqand S = \sum^l_{k=1} \beta'_k Q'_k.
\]

Suppose $\tau(P'_1) \geq \tau(Q'_1)$.  Since $\fA$ has strong comparison of projections, there exists a projection $P_1 \in \fA$ such that $\tau(P_1) = \tau(Q'_1)$ and $P_1\leq P'_1$.  Letting $Q_1 = Q'_1$, we have
\[
T = \alpha'_1 P_1 + \alpha'_1 (P'_1 - P_1) + \sum^m_{k=2} \alpha'_k P'_k \qqand S = \beta'_1Q_1 + \sum^l_{k=2} \beta'_k Q'_k.
\]
Similarly, if $\tau(P'_1) \leq \tau(Q'_1)$, there exists a projection $Q_1 \in \fA$ such that $\tau(Q_1) = \tau(P'_1)$ and $Q_1\leq Q'_1$.  Letting $P_1 = P'_1$, we have
\[
T = \alpha'_1 P_1 + \sum^m_{k=2} \alpha'_k P'_k \qqand S = \beta'_1Q_1  + \beta'_1 (Q'_1 - Q_1)+ \sum^l_{k=2} \beta'_k Q'_k.
\]
By repeating this argument at most another $m + l - 1$ times (for the next iteration, using $P'_2$ and $Q'_2$ when $\tau(P'_1) = \tau(Q'_1)$ and otherwise using $P'_1 - P_1$ and $Q'_2$ in the first case and $P'_2$ and $Q'_1 - Q_1$ in the second case), the result follows.
\end{proof}

The following result enables us to reduce Theorem \ref{thm:classification-of-convex-hull} to the case of self-adjoint operators with finite spectrum.  More is demonstrated than is needed for Theorem \ref{thm:classification-of-convex-hull} in order to facilitate results in Section \ref{sec:Other-Majorizations}.

\begin{lem}
\label{lem:reduction-to-finite-spectrum}
Let $\fA$ and $\tau$  be as in Theorem \ref{thm:classification-of-convex-hull}.   If $S, T \in \fA$ are self-adjoint operators, then for every $\epsilon > 0$ there exists self-adjoint operators $S', T' \in \fA$ with finite spectrum such that
\[
\left\|T - T'\right\| < \epsilon, \qqand \left\|S - S'\right\| < \epsilon.
\]
Furthermore:
\begin{enumerate}
\item $T' \prec_\tau T$ and $S' \prec_\tau S$.
\item If $S, T \geq 0$, then $S', T' \geq 0$.
\item If $S \prec_\tau T$, then $S' \prec_\tau T'$.
\item If $S, T \geq 0$ and $S \prec^w_\tau T$, then  $S' \prec^w_\tau T'$.
\item If $\lambda^\tau_{S}(s) \leq \lambda^\tau_{T}(s)$ for all $s \in [0,1)$, then $\lambda^\tau_{S'}(s) \leq \lambda^\tau_{T'}(s)$ for all $s \in [0,1)$.
\end{enumerate}
\end{lem}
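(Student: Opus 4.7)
The plan is: use real rank zero to pass to finite-spectrum approximants $\tilde T, \tilde S$, use Lemma \ref{lem:getting-equal-trace-projections} to put them in a common spectral refinement, and then \emph{replace the scalars} in those decompositions by the averages of $\lambda^\tau_T$ and $\lambda^\tau_S$ over the resulting partition of $[0,1)$. The averaging scheme (whose engine is Example \ref{exam:averaging}) will simultaneously handle all five conclusions.

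In detail, I would first choose self-adjoint $\tilde T, \tilde S\in\fA$ with finite spectrum such that $\|T-\tilde T\|, \|S-\tilde S\|<\epsilon/2$. Applying Lemma \ref{lem:getting-equal-trace-projections} yields pairwise orthogonal non-zero projections $\{P_k\}_{k=1}^n$ and $\{Q_k\}_{k=1}^n$ summing to $I_\fA$, with $\tau(P_k)=\tau(Q_k)$, and scalars $\alpha_1\ge\cdots\ge\alpha_n$, $\beta_1\ge\cdots\ge\beta_n$ such that $\tilde T=\sum_k\alpha_k P_k$ and $\tilde S=\sum_k\beta_k Q_k$. Set $s_k=\sum_{j\le k}\tau(P_j)$, so $0=s_0<s_1<\cdots<s_n=1$ by faithfulness of $\tau$. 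Define
\[
\alpha'_k=\frac{1}{s_k-s_{k-1}}\int_{s_{k-1}}^{s_k}\lambda^\tau_T(s)\,ds,\qquad \beta'_k=\frac{1}{s_k-s_{k-1}}\int_{s_{k-1}}^{s_k}\lambda^\tau_S(s)\,ds,
\]
and put $T'=\sum_k\alpha'_k P_k$, $S'=\sum_k\beta'_k Q_k$. Because $\lambda^\tau_T,\lambda^\tau_S$ are non-increasing (Theorem \ref{thm:properties-of-eigenvalue-functions}(\ref{part:decreasing-and-continuous})), so are $(\alpha'_k)$ and $(\beta'_k)$, and Example \ref{exam:finite-spectrum-eigenvalue-functions} then gives $\lambda^\tau_{T'}(s)=\alpha'_k$ and $\lambda^\tau_{S'}(s)=\beta'_k$ on $[s_{k-1},s_k)$.

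For the norm estimate, Theorem \ref{thm:properties-of-eigenvalue-functions}(\ref{part:difference}) gives $|\lambda^\tau_T(s)-\alpha_k|\le\|T-\tilde T\|<\epsilon/2$ on $[s_{k-1},s_k)$, hence $|\alpha'_k-\alpha_k|<\epsilon/2$; since the $P_k$ are orthogonal projections summing to $I_\fA$, $\|\tilde T-T'\|=\max_k|\alpha'_k-\alpha_k|<\epsilon/2$, so $\|T-T'\|<\epsilon$, and likewise $\|S-S'\|<\epsilon$. Conclusion (1) is immediate from Example \ref{exam:averaging} applied to $\lambda^\tau_T$ and $\lambda^\tau_S$. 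Conclusion (2) holds because $T,S\ge 0$ force $\lambda^\tau_T,\lambda^\tau_S\ge 0$, whence $\alpha'_k,\beta'_k\ge 0$. Conclusion (5) is immediate since pointwise domination $\lambda^\tau_S\le\lambda^\tau_T$ passes to averages, giving $\beta'_k\le\alpha'_k$.

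The main work is conclusions (3) and (4). By the averaging identities, $\int_0^{s_k}\lambda^\tau_{T'}=\int_0^{s_k}\lambda^\tau_T$ and $\int_0^{s_k}\lambda^\tau_{S'}=\int_0^{s_k}\lambda^\tau_S$ hold at every partition point $s_k$, so the hypothesis $S\prec_\tau T$ (respectively $S\prec^w_\tau T$) transfers instantly to $\int_0^{s_k}\lambda^\tau_{S'}\le\int_0^{s_k}\lambda^\tau_{T'}$ at each $s_k$, with equality at $s_n=1$ in the $\prec_\tau$ case. Between consecutive partition points $\lambda^\tau_{T'}$ and $\lambda^\tau_{S'}$ are constant, so $t\mapsto\int_0^t(\lambda^\tau_{T'}-\lambda^\tau_{S'})$ is affine on each $[s_{k-1},s_k]$, and an affine function non-negative at both endpoints is non-negative throughout, finishing (3) and (4). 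The anticipated obstacle — transferring majorization through the approximation — is precisely what the common partition from Lemma \ref{lem:getting-equal-trace-projections} and this affine-interpolation observation resolve in one stroke.
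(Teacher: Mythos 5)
Your proof is correct and follows essentially the same route as the paper: approximate by finite-spectrum operators, refine via Lemma \ref{lem:getting-equal-trace-projections}, and replace the scalars by cell averages of the eigenvalue functions. Your affine-interpolation observation for conclusions (3) and (4) is a clean repackaging of the paper's two-case analysis (slope non-negative versus negative on each subinterval), but it is the same underlying idea.
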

\begin{proof}
Let $\epsilon > 0$.  Since $\fA$ has real rank zero, there exists self-adjoint operators $T_0, S_0 \in \fA$ with finite spectrum such that 
\[
\left\|T - T_0\right\| \leq \frac{\epsilon}{2} \qqand \left\|S - S_0\right\| \leq \frac{\epsilon}{2}.
\]
Let $\{P_k\}^n_{k=1}$, $\{Q_k\}^n_{k=1}$, $\{\alpha_k\}^n_{k=1}$, and $\{\beta_k\}^n_{k=1}$ be as in the conclusions of Lemma \ref{lem:getting-equal-trace-projections} so that
\[
T_0 = \sum^n_{k=1} \alpha_k P_k \qqand S_0 = \sum^n_{k=1} \beta_k Q_k,
\]
and, for each $k \in \{0, 1,\ldots, n\}$, let $s_k = \sum^k_{j=1} \tau(P_j)$.  Notice $s_k < s_{k+1}$ for all $k$, $s_0 = 0$, $s_n = 1$, and $\lambda^\tau_{T_0}(s) = \alpha_k$ and $\lambda^\tau_{S_0}(s) = \beta_k$ for all $s \in [s_{k-1}, s_k)$ by Example \ref{exam:finite-spectrum-eigenvalue-functions}.   For each $k \in \{1,\ldots, n\}$, let
\[
\alpha'_k = \frac{1}{s_k - s_{k-1}} \int^{s_k}_{s_{k-1}} \lambda^\tau_{T}(s) \, ds \qqand \beta'_k = \frac{1}{s_k - s_{k-1}} \int^{s_k}_{s_{k-1}} \lambda^\tau_{S}(s) \, ds,
\]
and let 
\[
T' = \sum^n_{k=1} \alpha'_k P_k \qqand S' = \sum^n_{k=1} \beta'_k Q_k.
\]

We claim $T'$ and $S'$ are the desired self-adjoint operators.  Indeed Example \ref{exam:averaging} implies $T' \prec_\tau T$ and $S' \prec_\tau S$.  Furthermore, if $S, T \geq 0$, then $\lambda^\tau_{S}(s)$ and $\lambda^\tau_T(s)$ are non-negative functions by part (\ref{part:norm-at-zero}) of Theorem \ref{thm:properties-of-eigenvalue-functions}.  Consequently $\alpha'_k, \beta'_k \geq 0$ for all $k$, so $S', T' \geq 0$.

To see that $\left\|T - T'\right\| < \epsilon$, it suffices to show that $\left\|T_0 - T'\right\| \leq \frac{\epsilon}{2}$.  For each $k$, notice
\begin{align*}
\left|\alpha_k - \alpha'_k\right| &\leq \frac{1}{s_k - s_{k-1}} \int^{s_k}_{s_{k-1}} |\alpha_k - \lambda^\tau_T(s)| \, ds \\
&= \frac{1}{s_k - s_{k-1}} \int^{s_k}_{s_{k-1}} |\lambda^\tau_{T_0}(s) - \lambda^\tau_T(s)| \, ds \\
&\leq \frac{1}{s_k - s_{k-1}} \int^{s_k}_{s_{k-1}} \left\|T_0 - T\right\| \, ds= \left\|T_0 - T\right\| < \frac{\epsilon}{2}
\end{align*}
by part (\ref{part:difference}) of Theorem \ref{thm:properties-of-eigenvalue-functions}.  As this holds for all $k$, we obtain $\left\|T_0 - T'\right\| \leq \frac{\epsilon}{2}$.  The same arguments show $\left\|S - S'\right\| < \epsilon$.

Suppose $S \prec_\tau T$.  Notice, by part (\ref{part:decreasing-and-continuous}) of Theorem \ref{thm:properties-of-eigenvalue-functions}, that $\alpha'_k \geq \alpha'_{k+1}$ and $\beta'_k \geq \beta'_{k+1}$ for all $k$.  Consequently $\lambda^\tau_{T'}(s) = \alpha'_k$ and $\lambda^\tau_{S'}(s) = \beta'_k$ for all $s \in [s_{k-1}, s_k)$ by Example \ref{exam:finite-spectrum-eigenvalue-functions}.   This along with the definition of $\alpha'_k$ and $\beta'_k$ implies
\[
\int^{s_k}_{s_{k-1}} \lambda^\tau_{T'}(s) \, ds = \int^{s_k}_{s_{k-1}} \lambda^\tau_T(s) \, ds \qqand\int^{s_k}_{s_{k-1}} \lambda^\tau_{S'}(s) \, ds = \int^{s_k}_{s_{k-1}} \lambda^\tau_S(s) \, ds
\]
for all $k$.   In particular, by adding integrals, we obtain
\[
\int^{1}_{0} \lambda^\tau_{T'}(s) \, ds = \int^{1}_{0} \lambda^\tau_T(s) \, ds =\int^{1}_{0} \lambda^\tau_S(s) \, ds =    \int^{1}_{0} \lambda^\tau_{S'}(s) \, ds.
\]
For an arbitrary $t\in [0,1]$, choose $k \in \{1,\ldots, n\}$ such that $t \in [s_{k-1}, s_{k}]$ and notice
\[
\int^t_0 \lambda^\tau_{T'}(s) - \lambda^\tau_{S'}(s) \, ds = \int^{s_{k-1}}_0 \lambda^\tau_{T}(s) - \lambda^\tau_{S}(s) \, ds + \int^{t}_{s_{k-1}} \lambda^\tau_{T'}(s) - \lambda^\tau_{S'}(s) \, ds.
\]
To see the left-hand-side is always non-negative, we note that $\lambda^\tau_{T'}(s) - \lambda^\tau_{S'}(s)$ is constant on $[s_{k-1}, s_k)$.  If $\lambda^\tau_{T'}(s) - \lambda^\tau_{S'}(s) \geq 0$ on $[s_{k-1}, s_k)$, then
\[
\int^t_0 \lambda^\tau_{T'}(s) - \lambda^\tau_{S'}(s) \, ds \geq  \int^{s_{k-1}}_0 \lambda^\tau_{T}(s) - \lambda^\tau_{S}(s) \, ds \geq 0.
\]
Otherwise $\lambda^\tau_{T'}(s) - \lambda^\tau_{S'}(s) < 0$ on $[s_{k-1}, s_k)$ so
\begin{align*}
\int^t_0 \lambda^\tau_{T'}(s) - \lambda^\tau_{S'}(s) \, ds &\geq \int^{s_{k-1}}_0 \lambda^\tau_{T}(s) - \lambda^\tau_{S}(s) \, ds + \int^{s_k}_{s_{k-1}} \lambda^\tau_{T'}(s) - \lambda^\tau_{S'}(s) \, ds\\
&= \int^{s_{k-1}}_0 \lambda^\tau_{T}(s) - \lambda^\tau_{S}(s) \, ds + \int^{s_k}_{s_{k-1}} \lambda^\tau_{T}(s) - \lambda^\tau_{S}(s) \, ds \geq 0
\end{align*}
Hence $S' \prec_\tau T'$ when $S \prec_\tau T$.

If $S, T \geq 0$ and $S \prec^w_\tau T$, then the proof that $S' \prec^w_\tau T'$ follows from the above proof (ignoring the part that shows $\int^1_0 \lambda^\tau_{S'}(s) \, ds = \int^1_0 \lambda^\tau_{T'}(s) \, ds$).

If $\lambda^\tau_{S}(s) \leq \lambda^\tau_{T}(s)$ for all $s \in [0,1)$, then $\beta'_k \leq \alpha'_k$ for all $k$ and thus $\lambda^\tau_{S'}(s) \leq \lambda^\tau_{T'}(s)$ for all $s \in [0,1)$ by Example \ref{exam:finite-spectrum-eigenvalue-functions}.
\end{proof}

The following result for elements of $\M_2(\bC)$ is referred to as a pinching.
\begin{lem}
\label{lem:pinching}
Let $\fA$ and $\tau$  be as in Theorem \ref{thm:classification-of-convex-hull}.   If $P \in \fA$ is a projection, $a,b \in \bR$, and $T = a P + b (I_\fA - P)$, then for all $t \in [0,1]$, 
\[
tT + (1-t) \tau(T) I_\fA = (at + \tau(T)(1-t)) P + (bt + \tau(T)(1-t)) (I_\fA - P) \in \cconv(\U(T)).
\]
\end{lem}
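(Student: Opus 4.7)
The plan is to observe that this lemma is essentially an immediate corollary of Lemma \ref{lem:two-point-spectrum-strong}, combined with the convexity of $\cconv(\U(T))$. The first step is to unpack the equality
\[
tT + (1-t)\tau(T) I_\fA = (at + \tau(T)(1-t)) P + (bt + \tau(T)(1-t)) (I_\fA - P),
\]
which follows by direct computation from $T = aP + b(I_\fA - P)$ together with $I_\fA = P + (I_\fA - P)$. So all that needs to be shown is that the left-hand side lies in $\cconv(\U(T))$.

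For this, I would note two containments. First, $T = I_\fA^* T I_\fA \in \U(T) \subseteq \cconv(\U(T))$. Second, since $T = aP + b(I_\fA - P)$ is a self-adjoint operator with two-point spectrum and the hypotheses of the present lemma coincide with those of Lemma \ref{lem:two-point-spectrum-strong} (namely, $\fA$ is a unital C$^*$-algebra of real rank zero with strong comparison of projections with respect to the faithful tracial state $\tau$), that lemma applies directly to give $\tau(T) I_\fA \in \cconv(\U(T))$.

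Finally, $\cconv(\U(T))$ is a closed convex subset of $\fA$, so any convex combination of its elements lies in it. Taking the convex combination $tT + (1-t)\tau(T)I_\fA$ of the two points identified above completes the argument. There is no serious obstacle here: all of the real work was carried out in Lemma \ref{lem:two-point-spectrum-strong}, and the present statement just records the observation that once the endpoint $\tau(T)I_\fA$ has been captured inside $\cconv(\U(T))$, the entire segment from $T$ to $\tau(T)I_\fA$ is captured as well.
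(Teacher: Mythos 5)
Your proof is correct, and it takes a genuinely shorter route than the paper's.  You observe that $T \in \U(T) \subseteq \cconv(\U(T))$, that $\tau(T) I_\fA \in \cconv(\U(T))$ by Lemma~\ref{lem:two-point-spectrum-strong} (whose hypotheses coincide exactly with those in force here), and that $\cconv(\U(T))$, being the norm closure of the convex set $\conv(\U(T))$ in a Banach space, is convex; the whole segment from $T$ to $\tau(T) I_\fA$ is therefore contained in $\cconv(\U(T))$, and the algebraic identity in the statement is a one-line expansion of $I_\fA = P + (I_\fA - P)$.  This is strictly more elementary than the paper's argument, which builds a copy of $\M_2(\bC)$ via strong comparison of projections, pushes $\diag(a,b)$ to $\diag(a',b'')$ using Theorem~\ref{thm:majorization-in-factors}, and then invokes Theorem~\ref{thm:scalars-in-convex-hulls} on the corner $(I_\fA - P)\fA(I_\fA - P)$ together with a trace computation to conclude.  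The paper's route is more explicitly constructive and rehearses the matrix-pinching and direct-sum manipulations that reappear in Lemma~\ref{lem:reduction-argument}, but as a proof of the stated claim it establishes nothing beyond what your convexity observation already gives.
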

\begin{proof}
Fix $t \in[0,1]$ and let 
\[
a' = at + \tau(T)(1-t) \qqand b' = bt + \tau(T)(1-t).
\]
Since $\tau(T) = a \tau(P) + b \tau(I_\fA - P) \in \conv(\{a,b\})$, we obtain that $a', b' \in \conv(\{a,b\})$.  

By interchanging $P$ and $I_\fA - P$, we may assume that $\tau(P) \leq \frac{1}{2}$.  Since $\fA$ has strong comparison of projections, there exists a projection $Q \in \fA$ such that $Q \sim P$ and $Q \leq I_\fA - P$.  Consequently, using the partial isometry implementing the equivalence of $P$ and $Q$, a copy of $\M_2(\bC)$ may be constructed in $(P+Q)\fA (P+Q)$ so that $P$ and $Q$ are the two diagonal rank one projections.  Hence $T$ can be viewed as the operator
\[
T = (aP +bQ) \oplus b (I_\fA - P - Q) \in \M_2(\bC) \oplus (I_\fA - P - Q) \fA (I_\fA - P - Q) \subseteq \fA.
\]

Choose $b'' \in \bR$ so that $b'' + a' = a + b$.  Notice $b'' \in \conv(\{a, b\})$ as $a' \in \conv(\{a,b\})$.  Using Example \ref{exam:majorization-self-adjoint-matrices}, we see that
\[
\diag(a', b'') \prec_{\frac{1}{2} \tr} \diag(a, b)
\]
where $\frac{1}{2} \tr$ is the normalized trace on $\M_2(\bC)$ (which agrees with $\tau_{P+Q}$).  Thus Theorem \ref{thm:majorization-in-factors} along with a direct sum argument implies that
\[
a'P + b''Q + b (I_\fA - P - Q) \in \cconv(\U(T)).
\]

By applying Theorem \ref{thm:scalars-in-convex-hulls} to $b''Q + b (I_\fA - P - Q)$ in $(I_\fA - P)\fA (I_\fA - P)$ and by applying a direct sum argument, we obtain that
\[
a' P + b''' (I_\fA - P) \in \cconv(\U(T))
\]
for some $b''' \in \bR$.  As every element of $\cconv(\U(T))$ has trace $\tau(T)$, one can verify that $b''' = b'$.
\end{proof}

The following result contains the main technical details necessary for a recursive argument in the proof of Theorem \ref{thm:classification-of-convex-hull}.  In particular, it will enable us to systematically apply pinchings.

\begin{lem}
\label{lem:reduction-argument}
Let $\fA$ and $\tau$ be as in Theorem \ref{thm:classification-of-convex-hull}.  Suppose  $\{P_k\}^n_{k=1}$ is a collection of pairwise orthogonal projections with $\sum^n_{k=1} P_k= I_\fA$, $\{\alpha_k\}^n_{k=1}, \{\beta_k\}^n_{k=1} \subseteq \bR$ with $\beta_k \geq \beta_{k+1}$ for all $k$, and
\[
T = \sum^n_{k=1} \alpha_k P_k \qqand S = \sum^n_{k=1} \beta_k P_k.
\]
Suppose further that $S \prec_\tau T$ and there exists a $j$ such that $\alpha_k \geq \beta_1$ for all $k < j$, $\alpha_j < \beta_1$, and $\alpha_k \geq \alpha_{k+1}$ for all $k \geq j$.  Then there exists $\{\alpha'_k\}^n_{k=1} \subseteq \bR$ such that $\alpha'_1 = \beta_1$, $\alpha'_k = \alpha_k \geq \beta_1$ for all $1 < k < j$, $\alpha'_k \geq \alpha'_{k+1}$ for all $k \geq j$, and
\[
T' = \sum^n_{k=1} \alpha'_k P_k \in \cconv(\U(T)).
\]
Furthermore, if $Q = \sum^n_{k=2} P_k$, then $QSQ \prec_{\tau_Q}  QT'Q$ in $Q\fA Q$.
\end{lem}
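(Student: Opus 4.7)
The plan is to build $T'$ by iteratively pinching $P_1$ against the lower projections $P_j, P_{j+1}, \ldots, P_m$ for a carefully chosen $m$, and then verify the majorization through a partial-sum estimate. I choose $m \in \{j, j+1, \ldots, n\}$ to be the least integer satisfying
\[
\alpha_1 \tau(P_1) + \sum_{k=j}^m \alpha_k \tau(P_k) \leq \beta_1 \left(\tau(P_1) + \sum_{k=j}^m \tau(P_k)\right),
\]
which exists because at $m = n$ the inequality reduces to $\tau(T) = \tau(S) \leq \beta_1$. Defining $\beta'_m$ by the trace-preservation identity
\[
\beta_1 \left(\tau(P_1) + \sum_{k=j}^{m-1} \tau(P_k)\right) + \beta'_m \tau(P_m) = \alpha_1 \tau(P_1) + \sum_{k=j}^m \alpha_k \tau(P_k),
\]
the minimality of $m$ forces $\alpha_m \leq \beta'_m \leq \beta_1$. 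Set $\alpha'_1 = \beta_1$, $\alpha'_k = \alpha_k$ for $1 < k < j$, $\alpha'_k = \beta_1$ for $j \leq k < m$, $\alpha'_m = \beta'_m$, and $\alpha'_k = \alpha_k$ for $k > m$; the ordering conditions on $\alpha'_k$ follow by inspection.

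To realise $T' = \sum_k \alpha'_k P_k$ inside $\cconv(\U(T))$, I apply Lemma \ref{lem:pinching} iteratively in successively larger compressions. Writing $R_{l-1} := P_1 + \sum_{k=j}^{l-1} P_k$, at stage $l$ the current operator has a common coefficient $c_{l-1}$ on $R_{l-1}$ (with $c_{j-1} := \alpha_1$) and $\alpha_l$ on $P_l$; for $l = j, \ldots, m-1$ a full pinching in $(R_{l-1} + P_l)\fA(R_{l-1} + P_l)$ equalises these coefficients. At stage $l = m$, the minimality of $m$ ensures that the pool coefficient $c_{m-1}$ satisfies $c_{m-1} \geq \beta_1$ while the coefficient after a full pinching would be $\leq \beta_1$, so an appropriate partial pinching yields coefficient $\beta_1$ on $R_{m-1}$ and $\beta'_m$ on $P_m$. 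Lemma \ref{lem:convex-hull-transitive} then gives $T' \in \cconv(\U(T))$.

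For $QSQ \prec_{\tau_Q} QT'Q$, I note that $S = \beta_1 P_1 + QSQ$ and $T' = \beta_1 P_1 + QT'Q$ share the same $P_1$-coefficient, yielding the algebraic identity
\[
\tau((X - r I_\fA)_+) = (\beta_1 - r)_+ \tau(P_1) + \tau(Q)\, \tau_Q((QXQ - rQ)_+)
\]
for $X \in \{S, T'\}$, so (after a shift to positivity and applying Corollary \ref{cor:versions-of-majorization-for-positives}) the claim is equivalent to $S \prec_\tau T'$. I prove the latter via the partial-sum inequality $\Sigma_k := \sum_{i=1}^k (\alpha'_i - \beta_i) \tau(P_i) \geq 0$ for every $k \in \{0, 1, \ldots, n\}$. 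When $k < m$, each summand is non-negative because $\alpha_i \geq \beta_1 \geq \beta_i$ for $1 < i < j$ and $\beta_1 \geq \beta_i$ for $j \leq i$. When $k \geq m$, the mass-preservation identity gives $\sum_{i=1}^k \alpha'_i \tau(P_i) = \sum_{i=1}^k \alpha_i \tau(P_i)$; and since every $\alpha_i$ with $i < j$ exceeds $\beta_1 > \alpha_j \geq \alpha_{j+1} \geq \ldots$, one has $\int_0^{s_k} \lambda^\tau_T(s)\,ds = \sum_{i=1}^k \alpha_i \tau(P_i)$ at $s_k := \sum_{i=1}^k \tau(P_i)$, so $S \prec_\tau T$ at $t = s_k$ yields $\Sigma_k \geq 0$. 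Finally, combining $\Sigma_k \geq 0$ with the Ky Fan-type lower bound $\int_0^{\tau(E)} \lambda^\tau_{T'}(s)\,ds \geq \tau(T' E)$ for projections $E$ in the GNS completion (via Lemma \ref{lem:eigenvalue-functions-come-from-vN-algebras}), applied with $E = \sum_{i=1}^{k-1} P_i + F$ for a suitable subprojection $F \leq P_k$, together with piecewise-linear interpolation in $t$, gives $\int_0^t \lambda^\tau_S \leq \int_0^t \lambda^\tau_{T'}$ for all $t$. The main technical obstacle is the case $k = m$ of the partial-sum inequality, where the tight choice of $m$ and $\beta'_m$ combined with $S \prec_\tau T$ at $t = s_m$ is essential.
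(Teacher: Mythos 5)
Your proof is correct, and it takes a genuinely different route from the paper's.  The paper builds $T'$ by iteratively applying Lemma~\ref{lem:pinching} to the \emph{pair} $(P_1, P_l)$ for $l = j, j+1, \ldots$ (a full pinching each time until a final partial pinching brings $\alpha'_1$ down to $\beta_1$), whereas you pinch a \emph{growing pool} $R_{l-1} = P_1 + P_j + \cdots + P_{l-1}$ against $P_l$, with a single partial pinching at the threshold index $m$; the resulting $T'$ is different (yours has $\alpha'_j = \cdots = \alpha'_{m-1} = \beta_1$, while the paper's $\alpha'_j, \ldots$ form a strictly decreasing run), but both satisfy the conclusion.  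More significantly, the paper establishes $S \prec_\tau T'$ and $QSQ \prec_{\tau_Q} QT'Q$ by two separate, long case-by-case integral computations, whereas you observe that since $\alpha'_1 = \beta_1$ one has the exact decomposition $\tau((X - rI_\fA)_+) = (\beta_1 - r)_+\tau(P_1) + \tau(Q)\,\tau_Q((QXQ - rQ)_+)$ for $X \in \{S, T'\}$, so that (after a shift to positivity and via Corollary~\ref{cor:versions-of-majorization-for-positives}) the two majorizations are \emph{equivalent}, and you only need to verify one.  Your verification via the partial sums $\Sigma_k \geq 0$ (trivial for $k < m$, and for $k \geq m$ obtained by pulling back to $S \prec_\tau T$ at $t = s_k$ using the mass-preservation identity) is also cleaner than the paper's.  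What your route buys is a shorter, more conceptual verification; what the paper's route buys is that it never needs the Ky~Fan inequality or any appeal to the GNS completion.

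One technical remark on the last step: your interpolation to non-integer $t$ via ``$E = \sum_{i=1}^{k-1} P_i + F$ for a suitable subprojection $F \leq P_k$'' is not guaranteed to work, because $\pi_\tau(\fA)''$ need not be diffuse and such $F$ of arbitrary trace may fail to exist.  However, this is easily repaired without subprojections: on each $[s_{k-1}, s_k]$ the function $\Phi(t) := \int_0^t \lambda^\tau_{T'}(s)\,ds - \int_0^t \lambda^\tau_S(s)\,ds$ has derivative $\lambda^\tau_{T'}(t) - \beta_k$, which is non-increasing, so $\Phi$ is concave on $[s_{k-1}, s_k]$; since you have shown $\Phi(s_{k-1}) \geq 0$ and $\Phi(s_k) \geq 0$, concavity (i.e.\ $\Phi$ lies above the non-negative secant) gives $\Phi \geq 0$ on the whole interval.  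With this replacement your argument is complete.
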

\begin{proof}
Note $j \geq 2$ by Example \ref{exam:finite-spectrum-eigenvalue-functions} along with the fact that $S \prec_\tau T$.  In addition, note $\alpha_1 > \alpha_j$.

Consider 
\[
T_0 = \alpha_1 P_1 + \alpha_j P_j \in (P_1 + P_j) \fA (P_1 + P_j).
\]
If $\beta_1 \in [\tau_{P_1 + P_j}(T_0), \alpha_1]$, write $\beta_1 = t \alpha_1 + (1-t) \tau_{P_1 + P_j}(T_0)$ with $t \in [0,1]$ and let
\[
\alpha'_1 = \beta_1, \quad \alpha'_j = t \alpha_j + (1-t) \tau_{P_1 + P_j}(T_0), \qand \alpha'_k = \alpha_k \text{ for all }k \neq 1, j.
\]
Otherwise, if $\beta_1 \notin [\tau_{P_1 + P_j}(T_0), \alpha_1]$, let 
\[
\alpha'_1 = \alpha'_j = \tau_{P_1 + P_j}(T_0), \qand \alpha'_k = \alpha_k \text{ for all }k \neq 1, j.
\]
Notice, in this later case, that $\alpha'_1 = \alpha'_j > \beta_1$.  Furthermore, in both cases,
\[
\alpha'_1 \tau(P_1) + \alpha'_j \tau(P_j) = \alpha_1 \tau(P_1) + \alpha_j \tau(P_j).
\]

If $T' = \sum^n_{k=1} \alpha'_k P_k$, then by applying Lemma \ref{lem:pinching} to $T_0\in (P_1 + P_j) \fA (P_1 + P_j)$ and by appealing to a direct sum argument, we obtain $T' \in \cconv(\U(T))$.

We claim that $S \prec_{\tau} T'$.  For each $k \in \{0, 1,\ldots, n\}$, let $s_k = \sum^k_{j=1} \tau(P_j)$.  Notice $s_k < s_{k+1}$ for all $k$, $s_0 = 0$, $s_n = 1$, and $\lambda^\tau_{T}(s) = \alpha_k$ and $\lambda^\tau_{S}(s) = \beta_k$ for all $s \in [s_{k-1}, s_k)$ by Example \ref{exam:finite-spectrum-eigenvalue-functions}.   Notice, in both of the above cases, that $\alpha'_k \geq \beta_1$ for all $k < j$ and $\alpha'_k \geq \alpha'_{k+1}$ for all $k \geq j$ (as $\alpha'_j \geq \alpha_j$).  Therefore, Definition \ref{defn:majorization} and Example \ref{exam:finite-spectrum-eigenvalue-functions} imply that $\lambda^\tau_{T'}(s) = \alpha'_k = \lambda^\tau_{T}(s)$ for all $s \in [s_{k-1}, s_k)$ with $k > j$,  
\[
\int^{s_j}_0 \lambda^\tau_{T'}(s) \, ds = \sum^j_{k=1} \alpha'_k \tau(P_k) = \sum^j_{k=1} \alpha_k \tau(P_k) = \int^{s_j}_0 \lambda^\tau_{T}(s) \, ds,
\]
and $\lambda^\tau_{T'}(s) \geq \beta_1$ for all $s < s_{j-1}$.
Consequently, if $t \in [0, s_{j-1}]$, we see that
\[
\int^t_0 \lambda^\tau_{T'}(s) - \lambda^\tau_S(s) \, ds \geq \int^t_0 \beta_1 - \beta_1 \, ds = 0.
\]

For $t \in [s_{j-1}, s_j)$, we will need to divide the proof into two cases.  First, if $\alpha'_j \geq \beta_j$, then $\alpha'_k \geq \beta_j$ for all $k < j$.  Consequently $\lambda^\tau_{T'}(s) \geq \beta_j$ on $[0, s_{j})$  so
\begin{align*}
\int^t_0 \lambda^\tau_{T'}(s) - \lambda^\tau_S(s) \, ds &=\int^{s_{j-1}}_0 \lambda^\tau_{T'}(s) - \lambda^\tau_S(s) \, ds + \int^t_{s_{j-1}} \lambda^\tau_{T'}(s) - \beta_j \, ds\geq 0 + 0.
\end{align*}
Otherwise suppose $\alpha'_j < \beta_j$. Notice $\alpha'_k < \alpha'_j < \beta_j \leq \beta_1 \leq \alpha'_l$ for all $k \geq j$ and $l < j$.  Thus
\[
\int^{s_{j-1}}_0 \lambda^\tau_{T'}(s)   \, ds = \sum^{j-1}_{k=1} \alpha'_k \tau(P_k)
\]
and $\lambda^{\tau}_{T'}(s) = \alpha'_j$ for all $s \in [s_{j-1}, s_j)$.  Consequently
\begin{align*}
\int^t_0 \lambda^\tau_{T'}(s) - \lambda^\tau_S(s) \, ds &= \sum^{j-1}_{k=1} (\alpha'_k - \beta_k) \tau(P_k) + \int^t_{s_{j-1}} \alpha'_j - \beta_j \, ds \\
& \geq \sum^{j-1}_{k=1} (\alpha'_k - \beta_k) \tau(P_k) + \int^{s_j}_{s_{j-1}} \alpha'_j - \beta_j \, ds \\
&= \sum^j_{k=1} (\alpha'_k - \beta_k) \tau(P_k)\\
&= \sum^j_{k=1} (\alpha_k - \beta_k) \tau(P_k) = \int^{s_j}_{0} \lambda^\tau_{T}(s) - \lambda^\tau_S(s) \, ds \geq 0.
\end{align*}

Finally, if $t \geq s_j$, then
\begin{align*}
\int^t_0 \lambda^\tau_{T'}(s) - \lambda^\tau_S(s) \, ds &= \int^{s_j}_0 \lambda^\tau_{T'}(s) - \lambda^\tau_S(s) \, ds + \int^t_{s_{j}} \lambda^\tau_{T'}(s) - \lambda^\tau_S(s) \, ds \\
&= \int^{s_j}_0 \lambda^\tau_{T}(s) - \lambda^\tau_S(s) \, ds + \int^t_{s_{j}} \lambda^\tau_{T}(s) - \lambda^\tau_S(s) \, ds \geq 0
\end{align*}
with equality when $t = 1$.  Thus the proof that $S \prec_\tau T'$ is complete.

Postponing the discussion of the $\alpha'_1 \neq \beta_1$ case, we demonstrate that if $\alpha'_1 = \beta_1$ then $QSQ \prec_{\tau_Q}  QT'Q$ in $Q\fA Q$.  For each $k \in \{1,\ldots, n\}$, let $s'_k = \sum^k_{j=2} \tau_Q(P_j)$.  Notice $s'_k < s'_{k+1}$ for all $k$, $s'_1 = 0$, $s'_n = 1$, and $\lambda^{\tau_Q}_{QSQ}(s) = \beta_k$ for all $s \in [s'_{k-1}, s'_k)$ by Example \ref{exam:finite-spectrum-eigenvalue-functions}.   In the case $\alpha'_1 = \beta_1$, we note that $\alpha'_j \leq \beta_1 \leq \alpha'_l$ for all $l < j$, and $\alpha'_k \geq \alpha'_{k+1}$ for all $k \geq j$.  Consequently,  $\lambda^{\tau_Q}_{QT'Q}(s) \geq \beta_1$ for all $s < s'_{j-1}$, $\lambda^{\tau_Q}_{T'}(s) = \alpha'_k$ for all $s \in [s'_{k-1}, s'_k)$ with $k \geq j$, and
\[
\int^{s'_{j-1}}_0 \lambda^{\tau_Q}_{Q T' Q}(s) \, ds = \sum^{j-1}_{k=2} \alpha'_k \tau_Q(P_k).
\]
Moreover, one can verify that
\[
 \lambda^{\tau_Q}_{QT'Q}\left(\frac{s - \tau(P_1)}{\tau(Q)}  \right) = \lambda^{\tau}_{T'}(s) \qqand  \lambda^{\tau_Q}_{QSQ}\left(\frac{s - \tau(P_1)}{\tau(Q)}  \right)= \lambda^{\tau}_{S}(s)
\]
for all $s \geq s_j$.

If $t < s'_{j-1}$, then
\[
\int^t_0 \lambda^{\tau_Q}_{QT'Q}(s) - \lambda^{\tau_Q}_{QSQ}(s) \, ds \geq \int^t_0 \beta_1 - \beta_2 \, ds \geq 0.
\]
If $t \in [s'_{j-1}, s'_j]$, we see that 
\begin{align*}
&\int^t_0 \lambda^{\tau_Q}_{QT'Q}(s) - \lambda^{\tau_Q}_{QSQ}(s) \, ds \\
&= \int^{s'_{j-1}}_0 \lambda^{\tau_Q}_{QT'Q}(s) - \lambda^{\tau_Q}_{QSQ}(s) \, ds + \int^t_{s'_{j-1}} \lambda^{\tau_Q}_{QT'Q}(s) - \lambda^{\tau_Q}_{QSQ}(s) \, ds\\
&= \frac{1}{\tau(Q)} \sum^{j-1}_{k=2} (\alpha'_k - \beta_k) \tau(P_k) + \int^t_{s'_{j-1}} \alpha'_j - \beta_j \, ds \\
&= \frac{1}{\tau(Q)} \sum^{j-1}_{k=1} (\alpha'_k - \beta_k) \tau(P_k) + \int^t_{s'_{j-1}} \alpha'_j - \beta_j \, ds \\
&= \frac{1}{\tau(Q)} \int^{s_{j-1}}_0 \lambda^{\tau}_{T'}(s) - \lambda^{\tau}_{S}(s) \, ds + \int^t_{s'_{j-1}} \alpha'_j - \beta_j \, ds .
\end{align*}
In particular, for $t = s'_j$, we see that
\begin{align*}
\int^{s'_j}_0 \lambda^{\tau_Q}_{QT'Q}(s) - \lambda^{\tau_Q}_{QSQ}(s) \, ds &= \frac{1}{\tau(Q)} \int^{s_{j-1}}_0 \lambda^{\tau}_{T'}(s) - \lambda^{\tau}_{S}(s) \, ds + \int^{s'_j}_{s'_{j-1}} \alpha'_j - \beta_j \, ds \\
&= \frac{1}{\tau(Q)} \int^{s_{j-1}}_0 \lambda^{\tau}_{T'}(s) - \lambda^{\tau}_{S}(s) \, ds + (\alpha'_j - \beta_j) \frac{\tau(P_j)}{\tau(Q)} \\
&= \frac{1}{\tau(Q)} \int^{s_{j}}_0 \lambda^{\tau}_{T'}(s) - \lambda^{\tau}_{S}(s) \, ds.
\end{align*}
If $\alpha'_j \geq \beta_j$, then
\[
\int^t_0 \lambda^{\tau_Q}_{QT'Q}(s) - \lambda^{\tau_Q}_{QSQ}(s) \, ds \geq \frac{1}{\tau(Q)} \int^{s_{j-1}}_0 \lambda^{\tau}_{T'}(s) - \lambda^{\tau}_{S}(s) \, ds \geq 0
\]
for all $t \in [s'_{j-1}, s'_j]$.  Otherwise $\alpha'_j < \beta_j$ and
\begin{align*}
\int^t_0 \lambda^{\tau_Q}_{QT'Q}(s) - \lambda^{\tau_Q}_{QSQ}(s) \, ds &\geq \int^{s'_j}_0 \lambda^{\tau_Q}_{QT'Q}(s) - \lambda^{\tau_Q}_{QSQ}(s) \, ds \\
&= \frac{1}{\tau(Q)} \int^{s_{j}}_0 \lambda^{\tau}_{T'}(s) - \lambda^{\tau}_{S}(s) \, ds \geq 0.
\end{align*}
Finally, if $t > s'_j$, 
\begin{align*}
&\int^t_0 \lambda^{\tau_Q}_{QT'Q}(s) - \lambda^{\tau_Q}_{QSQ}(s) \, ds \\
&= \int^{s'_{j}}_0 \lambda^{\tau_Q}_{QT'Q}(s) - \lambda^{\tau_Q}_{QSQ}(s) \, ds + \int^t_{s'_j} \lambda^{\tau_Q}_{QT'Q}(s) - \lambda^{\tau_Q}_{QSQ}(s) \, ds\\
&= \frac{1}{\tau(Q)} \int^{s_{j}}_0 \lambda^{\tau}_{T'}(s) - \lambda^{\tau}_{S}(s) \, ds 
\\ & \quad + \frac{1}{\tau(Q)}\int^{\tau(Q)t + \tau(P_1)}_{s_j} \lambda^{\tau_Q}_{QT'Q}\left(\frac{s - \tau(P_1)}{\tau(Q)}  \right) - \lambda^{\tau_Q}_{QSQ}\left(\frac{s - \tau(P_1)}{\tau(Q)}  \right) \, ds \\
&= \frac{1}{\tau(Q)} \int^{s_{j}}_0 \lambda^{\tau}_{T'}(s) - \lambda^{\tau}_{S}(s) \, ds  + \frac{1}{\tau(Q)} \int^{\tau(Q)t + \tau(P_1)}_{s_j} \lambda^{\tau}_{T'}(s) - \lambda^{\tau}_{S}(s) \, ds  \geq 0
\end{align*}
with equality to zero when $t = 1$.  Hence $QSQ \prec_{\tau_Q}  QT'Q$ in $Q\fA Q$.

To complete the proof, we notice the proof is complete when $\beta_1 \in [\tau_{P_1 + P_j}(T_0), \alpha_1]$ (i.e. the $\alpha'_1 = \beta_1$ case).  Otherwise, repeat the above proof with $j$ replaced with $j+1$ and $T$ replaced with $T'$.  Note we end up obtaining that $\alpha'_j \geq \alpha'_{j+1}$ under this recursion as the first iteration yields $\alpha'_1 = \alpha'_j$ and the second iteration would average $\alpha'_1$ with $\alpha'_{j+1} \leq \alpha_j < \alpha'_j$ to yield $\alpha''_k$ with $\alpha''_{j} = \alpha'_j > \alpha''_{j+1}$.  This process must eventually obtain $\alpha'_1 = \beta_1$ by reaching the case that $\beta_1 \in [\tau_{P_1 + P_j}(T_0), \alpha_1]$ for if we must apply the proof with $j = n$ and we produce a self-adjoint operator $T'$ with $S \prec_\tau T'$, $\alpha'_1 > \beta_1$, and $\alpha'_k \geq \beta_1 \geq \beta_l$ for all $k$ and $l$, we have a contradiction to the fact that $S \prec_\tau T'$ (which guarantees $\tau(S) = \tau(T')$).  Furthermore, note we obtain $QSQ \prec_{\tau_Q} QT'T$ at the last step of this iterative process.
\end{proof}

\begin{proof}[Proof of Theorem \ref{thm:classification-of-convex-hull}]
Let $T \in \fA$ be self-adjoint.  Note the inclusion
\[
\cconv(\U(T)) \subseteq \{S \in \fA \, \mid \, S^* = S, S \prec_\tau T\}
\]
follows by Lemma \ref{lem:easy-direction-for-convex-hulls}.

To prove the other inclusion, let $S \in \fA$ be self-adjoint with $S \prec_\tau T$.  By Lemma \ref{lem:reduction-to-finite-spectrum}, we may assume without loss of generality that $S$ and $T$ have finite spectrum. 

 Let $\{P_k\}^n_{k=1}$, $\{Q_k\}^n_{k=1}$, $\{\alpha_k\}^n_{k=1}$, and $\{\beta_k\}^n_{k=1}$ be as in Lemma \ref{lem:getting-equal-trace-projections} so that
\[
T = \sum^n_{k=1} \alpha_k Q_k \qqand S = \sum^n_{k=1} \beta_k P_k.
\]
Since $\fA$ has strong comparison of projections, there exists a unitary $U \in \fA$ such that $U^*Q_kU = P_k$ for all $k$.  Hence $U^*TU = \sum^n_{k=1} \alpha_k P_k$.  Since $\lambda^\tau_{U^*TU}(s) = \lambda^\tau_{T}(s)$ for all $s \in [0,1)$, $S \prec_\tau U^*TU$.  Consequently, Example \ref{exam:finite-spectrum-eigenvalue-functions} and Definition \ref{defn:majorization} implies $\alpha_1 \geq \beta_1 \geq \beta_n \geq \alpha_n$.  

If $\alpha_1 = \alpha_n$, then $T = S = \tau(T) I_\fA$ and there is nothing to prove.  Otherwise, we may apply Lemma \ref{lem:reduction-argument} to obtain, for some $\{\alpha'_k\}^n_{k=2} \subseteq \bR$,  that
\[
T' = \beta_1 P_1 + \sum^n_{k=2} \alpha'_k P_k \in\cconv(\U(U^*TU))
\qand
QSQ \prec_{\tau_Q} QT'Q \text{ in } Q \fA Q,
\]
where $Q = \sum^n_{k=2} P_k$.  In addition, note Lemma \ref{lem:reduction-argument} produces $\{\alpha'_k\}^n_{k=2}$ so that $QSQ$ and $QT'Q$ in $Q \fA Q$ are either equal or satisfy the hypotheses of Lemma \ref{lem:reduction-argument}; that is, $QSQ \prec_{\tau_Q} QT'Q$, $\alpha'_{k+1} \leq \alpha'_k$ for all $k \geq j$, $\alpha'_k = \alpha_k \geq \beta_1 \geq \beta_2$ for all $1 < k < j$, and, if $j = 2$, $\alpha'_2 \geq \beta_2 \geq \beta_n \geq \alpha'_n$ by Definition \ref{defn:majorization} and Example \ref{exam:finite-spectrum-eigenvalue-functions}.  Therefore, by applying Lemma \ref{lem:reduction-argument} at most another $n-1$ times, we obtain that
\[
S \in \cconv(\U(U^*TU))= \cconv(\U(T)). \qedhere
\]
\end{proof}

\section{Classification of Additional Sets}
\label{sec:Other-Majorizations}

In this section, we will study additional sets based on eigenvalue and singular value functions in C$^*$-algebras satisfying the hypotheses of Theorem \ref{thm:classification-of-convex-hull}.  We begin by studying the distance between unitary orbits of self-adjoint operators.  The following result is the main result of \cite{ST1992}.  We provide a different (but very similar) proof using the technology of this paper.

\begin{thm}[see \cite{ST1992}]
\label{thm:distance-between-unitary-orbits}
Let $\fA$ be a unital C$^*$-algebra with real rank zero that has strong comparison of projections with respect to a faithful tracial state $\tau$.   If $S, T \in \fA$ are self-adjoint, then
\[
\dist(\U(S), \U(T)) = \sup\{ |\lambda^\tau_S(s) - \lambda^\tau_T(s)| \, \mid \, s \in [0,1)\}.
\]
In particular, $S$ and $T$ are approximately unitarily equivalent if and only if $\lambda^\tau_S(s) = \lambda^\tau_T(s)$ for all $s \in [0,1)$ if and only if $S \prec_\tau T$ and $T \prec_\tau S$.
\end{thm}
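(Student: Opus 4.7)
The strategy is to establish both inequalities using Theorem \ref{thm:properties-of-eigenvalue-functions}, the refinement in Lemma \ref{lem:getting-equal-trace-projections}, and the ability under strong comparison to assemble partial isometries implementing projection equivalences into a single unitary. The lower bound follows almost immediately: for every pair of unitaries $U, V \in \fA$, parts (\ref{part:difference}) and (\ref{part:unitary}) of Theorem \ref{thm:properties-of-eigenvalue-functions} give
\[
\|U^*SU - V^*TV\| \geq |\lambda^\tau_{U^*SU}(s) - \lambda^\tau_{V^*TV}(s)| = |\lambda^\tau_S(s) - \lambda^\tau_T(s)|
\]
for all $s \in [0,1)$, so $\dist(\U(S), \U(T))$ dominates the supremum on the right.

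For the reverse inequality I would first handle the finite-spectrum case. Applying Lemma \ref{lem:getting-equal-trace-projections}, write $T = \sum_{k=1}^n \alpha_k P_k$ and $S = \sum_{k=1}^n \beta_k Q_k$ with $\tau(P_k) = \tau(Q_k)$ and with the $\alpha_k$'s and $\beta_k$'s non-increasing. Strong comparison of projections yields partial isometries $V_k \in \fA$ with $V_k^*V_k = P_k$ and $V_kV_k^* = Q_k$, whose sum $U = \sum_{k=1}^n V_k$ is a unitary satisfying $U P_k U^* = Q_k$. Consequently $S - UTU^* = \sum_{k=1}^n (\beta_k - \alpha_k) Q_k$ is self-adjoint with norm $\max_k |\beta_k - \alpha_k|$, which Example \ref{exam:finite-spectrum-eigenvalue-functions} identifies with $\sup_s |\lambda^\tau_S(s) - \lambda^\tau_T(s)|$.

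To pass to general self-adjoint $S, T$, fix $\epsilon > 0$ and use real rank zero to select finite-spectrum self-adjoint $S', T' \in \fA$ with $\|S-S'\|, \|T-T'\| < \epsilon$. Part (\ref{part:difference}) of Theorem \ref{thm:properties-of-eigenvalue-functions} then gives $|\lambda^\tau_{S'}(s) - \lambda^\tau_{T'}(s)| \leq |\lambda^\tau_S(s) - \lambda^\tau_T(s)| + 2\epsilon$ uniformly in $s$. Feeding $S', T'$ into the finite-spectrum step produces a unitary $U$ with $\|S' - UT'U^*\| = \sup_s |\lambda^\tau_{S'}(s) - \lambda^\tau_{T'}(s)|$, and the triangle inequality yields
\[
\dist(\U(S), \U(T)) \leq \|S-S'\| + \|S' - UT'U^*\| + \|T'-T\| \leq \sup_s |\lambda^\tau_S(s) - \lambda^\tau_T(s)| + 4\epsilon.
\]
Letting $\epsilon \searrow 0$ closes the main equality.

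The ``in particular'' clauses are then immediate. Approximate unitary equivalence is the statement $\dist(\U(S), \U(T)) = 0$, which by the main equality and the right continuity in part (\ref{part:decreasing-and-continuous}) of Theorem \ref{thm:properties-of-eigenvalue-functions} is equivalent to $\lambda^\tau_S \equiv \lambda^\tau_T$ on $[0,1)$. Mutual majorization $S \prec_\tau T$ and $T \prec_\tau S$ forces $\int_0^t \lambda^\tau_S(s)\,ds = \int_0^t \lambda^\tau_T(s)\,ds$ for every $t \in [0,1]$, which together with right continuity and Definition \ref{defn:majorization} is equivalent to pointwise equality of the eigenvalue functions. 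I expect the main technical point to be the finite-spectrum alignment: specifically, producing a single unitary out of the family of partial isometries supplied by strong comparison and then carefully matching the intervals arising from Example \ref{exam:finite-spectrum-eigenvalue-functions} with the common refinement of Lemma \ref{lem:getting-equal-trace-projections}. Strict comparison of projections alone would be insufficient, since it only delivers subprojection equivalences rather than the equality $P_k \sim Q_k$ required to assemble $U$.
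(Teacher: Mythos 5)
Your proposal is correct and follows essentially the same route as the paper: the lower bound from parts (\ref{part:difference}) and (\ref{part:unitary}) of Theorem \ref{thm:properties-of-eigenvalue-functions}, the upper bound by approximating with finite-spectrum elements via real rank zero, invoking Lemma \ref{lem:getting-equal-trace-projections} to obtain a common refinement with matching traces, and using strong comparison to assemble the partial isometries into a single unitary aligning $P_k$ with $Q_k$. The only cosmetic difference is the order in which you interleave the approximation and the finite-spectrum alignment; the paper does them in one pass, but the content is identical.
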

\begin{proof}
By parts (\ref{part:difference}, \ref{part:unitary}) of Theorem \ref{thm:properties-of-eigenvalue-functions}, we have
\[
|\lambda^\tau_S(s) - \lambda^\tau_T(s)| = |\lambda^\tau_{U^*SU}(s) - \lambda^\tau_{V^*TV}(s)| \leq \left\|U^*SU - V^*TV\right\|
\]
for all unitaries $U, V \in \fA$ and $s \in [0,1)$.  Hence 
\[
\sup\{ |\lambda^\tau_S(s) - \lambda^\tau_T(s)| \, \mid \, s \in [0,1)\} \leq \dist(\U(S), \U(T)).
\]

For the other inclusion, fix $\epsilon > 0$.  Since $\fA$ has real rank zero, there exists self-adjoint operators $S', T' \in \fA$ with finite spectrum such that 
\[
\left\|S - S'\right\| < \epsilon \qqand \left\|T - T'\right\| < \epsilon.
\]
Note
\[
|\lambda^\tau_{T}(s) - \lambda^\tau_{T'}(s)| < \epsilon \qqand |\lambda^\tau_S(s) - \lambda^\tau_{S'}(s)| < \epsilon
\]
for all $s \in [0,1)$ by part (\ref{part:difference}) of Theorem \ref{thm:properties-of-eigenvalue-functions}.

Let $\{P_k\}^n_{k=1}$, $\{Q_k\}^n_{k=1}$, $\{\alpha_k\}^n_{k=1}$, and $\{\beta_k\}^n_{k=1}$ be as in  Lemma \ref{lem:getting-equal-trace-projections} so that
\[
T' = \sum^n_{k=1} \alpha_k P_k \qqand S' = \sum^n_{k=1} \beta_k Q_k.
\]
If $s_k = \sum^k_{j=1} \tau(Q_j)$ for all $k \in \{0, 1, \ldots, n\}$, Example \ref{exam:finite-spectrum-eigenvalue-functions} implies $\lambda^\tau_{T'}(s)= \alpha_k$ and $\lambda^\tau_{S'}(s) = \beta_k$ for all $s \in [s_{k-1}, s_k)$.  Furthermore, since $\tau(P_k) = \tau(Q_k)$ for all $k$ and since $\fA$ has strong comparison of projections, there exists a unitary $U \in \fA$ such that $U^*P_kU = Q_k$ for all $k$ and, consequently, $U^*T'U  = \sum^n_{k=1} \alpha_k Q_k$.  Hence
\begin{align*}
\left\|U^*TU - S\right\| &\leq 2\epsilon +  \left\|U^*T'U - S'\right\|\\
&= 2\epsilon + \sup\{|\alpha_k - \beta_k| \, \mid \, k \in \{1,\ldots, n\}\} \\
&= 2\epsilon + \sup\{|\lambda^\tau_{T'}(s) - \lambda^\tau_{S'}(s)| \, \mid \, s \in [0,1)\} \\
&\leq 4\epsilon + \sup\{|\lambda^\tau_{T}(s) - \lambda^\tau_{S}(s)| \, \mid \, s \in [0,1)\}.
\end{align*}
As $\epsilon > 0$ was arbitrary, the proof is complete.
\end{proof}

The following result is an adaptation of \cite{HN1991}*{Theorem 2.4}.
\begin{thm}
\label{thm:distance-to-convex-hull}
Let $\fA$ be a unital C$^*$-algebra with real rank zero that has strong comparison of projections with respect to a faithful tracial state $\tau$.   If $S, T \in \fA$ are self-adjoint, then
\[
\dist(S, \conv(\U(T))) = \sup_{ t \in (0, 1)} \frac{1}{t} \max\left\{ \int^t_0 \lambda^\tau_{S}(s) - \lambda^\tau_T(s) \, ds, \int^1_{1-t} \lambda^\tau_{T}(s) - \lambda^\tau_S(s) \, ds    \right\}.
\]
\end{thm}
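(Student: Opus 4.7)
Write $D(S,T)$ for the right-hand side of the asserted identity.  The plan is to establish the two inequalities separately: the $\geq$ direction follows directly from Lemma~\ref{lem:easy-direction-for-convex-hulls}, while the $\leq$ direction uses Theorem~\ref{thm:classification-of-convex-hull} to reformulate the problem and then reduces to a finite-dimensional construction.

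For $D(S,T) \leq \dist(S, \conv(\U(T)))$, fix $R = \sum_{k=1}^m t_k U_k^* T U_k \in \conv(\U(T))$.  By Lemma~\ref{lem:easy-direction-for-convex-hulls}, $R=R^*$ and $R \prec_\tau T$; in particular $\tau(R)=\tau(T)$, so for every $t\in(0,1)$,
\[
\int_0^t \lambda^\tau_R(s)\,ds \leq \int_0^t \lambda^\tau_T(s)\,ds \qand \int_{1-t}^1 \lambda^\tau_T(s)\,ds \leq \int_{1-t}^1 \lambda^\tau_R(s)\,ds.
\]
Combining with the Lipschitz estimate $|\lambda^\tau_S(s)-\lambda^\tau_R(s)|\leq\|S-R\|$ from part~(\ref{part:difference}) of Theorem~\ref{thm:properties-of-eigenvalue-functions} yields
\[
\int_0^t (\lambda^\tau_S - \lambda^\tau_T)\,ds \leq \int_0^t (\lambda^\tau_S - \lambda^\tau_R)\,ds \leq t\|S-R\|,
\]
and the analogous chain for the tail integral.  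Dividing by $t$, taking the supremum over $t$, and then the infimum over $R$, gives the inequality.

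For the reverse inequality, the fact that distance to a set equals distance to its closure, combined with Theorem~\ref{thm:classification-of-convex-hull}, gives
\[
\dist(S, \conv(\U(T))) = \inf\{\|S-R\| : R=R^*\in\fA,\ R \prec_\tau T\}.
\]
Given $\epsilon>0$, I would apply Lemma~\ref{lem:reduction-to-finite-spectrum} to replace $S$ and $T$ by self-adjoint $S',T'$ with finite spectrum and $S' \prec_\tau S$, $T' \prec_\tau T$, at the cost of $O(\epsilon)$-error in both $\|S-R\|$ and $D(S,T)$ (via part~(\ref{part:difference}) of Theorem~\ref{thm:properties-of-eigenvalue-functions}); transitivity of $\prec_\tau$ then lets me transport any $R' \prec_\tau T'$ back to the original problem.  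In this finite-spectrum case, Lemma~\ref{lem:getting-equal-trace-projections} together with strong comparison of projections allows me to conjugate by a unitary so that
\[
T = \sum_{k=1}^n \alpha_k P_k \qand S = \sum_{k=1}^n \beta_k P_k
\]
share the same pairwise orthogonal projections $\{P_k\}_{k=1}^n$, with $\alpha_1\geq\cdots\geq\alpha_n$ and $\beta_1\geq\cdots\geq\beta_n$.  The task reduces to finding scalars $\gamma_1\geq\cdots\geq\gamma_n$ such that $|\gamma_k-\beta_k|\leq D(S,T)$, $\sum_{j=1}^k \gamma_j\tau(P_j) \leq \sum_{j=1}^k \alpha_j\tau(P_j)$ for every $k$, and equality at $k=n$; by Example~\ref{exam:finite-spectrum-eigenvalue-functions} and Definition~\ref{defn:majorization}, the operator $R := \sum_{k=1}^n \gamma_k P_k$ then satisfies $R \prec_\tau T$ and $\|S-R\|\leq D(S,T)$.

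The main obstacle is precisely this finite-dimensional construction of $(\gamma_k)$.  My plan is a ``water-filling'' procedure: initializing $\gamma_k := \beta_k$, on any prefix where $\sum_{j\leq k}(\beta_j-\alpha_j)\tau(P_j) > 0$, shave the offending leading entries downward by an amount bounded by $D(S,T)$ (permissible because, by definition of $D$, the prefix excess is at most $D(S,T)\sum_{j\leq k}\tau(P_j)$); symmetrically, on suffixes where the total sum has been undershot, lift the trailing entries upward by at most $D(S,T)$ (permissible by the tail term in $D$) until the full-sum equality is restored.  Verifying that these two adjustments can be made compatibly while preserving monotonicity of $(\gamma_k)$ is the technical heart of the argument; that feasibility occurs exactly at the threshold $D(S,T)$ is what the two suprema in the definition of $D$ encode, paralleling the construction of \cite{HN1991}*{Theorem 2.4} in the II$_1$ factor setting.
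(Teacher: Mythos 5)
Your lower bound $D(S,T)\leq\dist(S,\conv(\U(T)))$ is correct and matches the paper's argument exactly. For the upper bound, your overall strategy — reduce to finite spectrum via Lemma~\ref{lem:getting-equal-trace-projections} and strong comparison, put $S$ and $T$ on a common set of projections, then build an intermediate operator $R=\sum_k\gamma_kP_k$ with $R\prec_\tau T$ and $\|S-R\|\leq D(S,T)$ — is precisely what the paper does. But you have left the construction of $(\gamma_k)$ as a sketch (the ``water-filling'' paragraph), and you acknowledge it as the technical heart. As written, that is a genuine gap: you have not shown that the prefix-shaving and suffix-lifting can be performed simultaneously within a uniform band of width $D(S,T)$ while preserving the non-increasing order, and the interaction between those two operations (needed to restore the full-sum equality $\sum_k\gamma_k\tau(P_k)=\sum_k\alpha_k\tau(P_k)$) is exactly where the work lies.

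The paper sidesteps this by invoking \cite{HN1991}*{Proposition~1.4(1)}: having verified the integral inequalities
\[
\int_0^t \bigl(\lambda^\tau_{S'}(s)-\alpha-2\epsilon\bigr)\,ds \leq \int_0^t \lambda^\tau_{T'}(s)\,ds,
\qquad
\int_t^1 \bigl(\lambda^\tau_{S'}(s)+\alpha+2\epsilon\bigr)\,ds \geq \int_t^1 \lambda^\tau_{T'}(s)\,ds,
\]
that cited result hands over a non-increasing $h\in L_\infty[0,1]$ with $\lambda^\tau_{S'}-\alpha-2\epsilon\leq h\leq\lambda^\tau_{S'}+\alpha+2\epsilon$ and $h\prec\lambda^\tau_{T'}$; averaging $h$ over the intervals $[s_{k-1},s_k)$ produces the scalars you call $\gamma_k$, with monotonicity and majorization both automatic (via Example~\ref{exam:averaging} and Example~\ref{exam:finite-spectrum-eigenvalue-functions}). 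So the construction you would need to carry out discretely is already available as a function-theoretic lemma, and the paper uses it in that form. Also note a small difference in the reduction step: you approximate using Lemma~\ref{lem:reduction-to-finite-spectrum} and then invoke transitivity $T'\prec_\tau T$ to pull the constructed $R'$ back into $\cconv(\U(T))$, whereas the paper approximates by arbitrary finite-spectrum $S',T'$ (no majorization of $T$ by $T'$ needed) and absorbs the errors in the final norm estimate. Both are workable; yours is arguably cleaner on that point, but it does not rescue the missing construction.
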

\begin{proof}
Let $\alpha$ be the quantity on the right-hand side of the desired equation.  Suppose $T' \in \conv(\U(T))$.  Then $T' \prec_\tau T$ by Lemma \ref{lem:easy-direction-for-convex-hulls}.  Consequently, by part (\ref{part:difference}) of Theorem \ref{thm:properties-of-eigenvalue-functions} and by Definition \ref{defn:majorization}, 
\begin{align*}
\left\|T' - S\right\| &\geq \frac{1}{t} \int^t_0 \lambda^\tau_{S}(s) - \lambda^\tau_{T'}(s) \, ds \geq \frac{1}{t} \int^t_0 \lambda^\tau_{S}(s) - \lambda^\tau_{T}(s) \, ds \text{ and}\\
\left\|T' - S\right\| &\geq \frac{1}{t} \int^1_{1-t} \lambda^\tau_{T'}(s) - \lambda^\tau_{S}(s) \, ds \geq \frac{1}{t} \int^1_{1-t} \lambda^\tau_{T}(s) - \lambda^\tau_{S}(s) \, ds.
\end{align*}
Therefore $\dist(S, \conv(\U(T))) \geq \alpha$.

For the other inequality, first suppose $\alpha \leq 0$. Then
\[
\int^t_0 \lambda^\tau_{S}(s) - \lambda^\tau_T(s) \, ds \leq 0 \qqand \int^1_{1-t} \lambda^\tau_{T}(s) - \lambda^\tau_S(s) \, ds \leq 0
\]
for all $t \in (0, 1)$.  The first inequality implies 
\[
\int^t_0 \lambda^\tau_{S}(s) \, ds\leq \int^t_0 \lambda^\tau_T(s) \, ds
\]
for all $t \in [0,1]$, and by letting $t$ tend to $1$, the second inequality then implies
\[
\int^1_0 \lambda^\tau_{S}(s) \,ds = \int^1_0 \lambda^\tau_T(s) \, ds.
\]
Consequently, $\alpha = 0$ and $S \prec_\tau T$.  Thus Theorem \ref{thm:classification-of-convex-hull} implies $S \in \cconv(\U(T))$ so equality is obtained in this case.

Otherwise, suppose $\alpha > 0$.  Let $\epsilon > 0$.  Since $\fA$ has real rank zero, there exists self-adjoint operators $S', T' \in \fA$ with finite spectrum such that 
\[
\left\|S - S'\right\| < \epsilon \qqand \left\|T - T'\right\| < \epsilon.
\]
In addition, by part (\ref{part:difference}) of Theorem \ref{thm:properties-of-eigenvalue-functions},
\[
|\lambda^\tau_{S}(s) - \lambda^\tau_{S'}(s)| < \epsilon \qqand |\lambda^\tau_{T}(s) - \lambda^\tau_{T'}(s)| < \epsilon
\]
for all $s \in [0, 1)$.  By the definition of $\alpha$, we obtain
\begin{align*}
\int^t_0 \lambda^\tau_{S'}(s) - \alpha - 2\epsilon \, ds &\leq \int^t_0 \lambda^\tau_{S}(s) - \alpha - \epsilon \, ds \leq \int^t_0 \lambda^\tau_T(s) -\epsilon \, ds \leq \int^t_0 \lambda^\tau_{T'}(s) \, ds\\
\int^1_t \lambda^\tau_{S'}(s) + \alpha + 2\epsilon \, ds &\geq \int^t_0\lambda^\tau_{S}(s) + \alpha + \epsilon \, ds \geq \int^t_0\lambda^\tau_{T}(s) + \epsilon \, ds \geq \int^t_0\lambda^\tau_{T'}(s) \, ds
\end{align*}
for all $t \in (0, 1)$.  Consequently, using non-increasing rearrangements and \cite{HN1991}*{Proposition 1.4(1)} applied to $f_1(s) = \lambda^\tau_{S'}(s) - \alpha - 2\epsilon$, $f_2(s) = \lambda^\tau_{S'}(s) + \alpha + 2\epsilon$, and $g(s) = \lambda^\tau_{T'}(s)$, there exists a real-valued, non-increasing function $h \in L_\infty[0,1]$ such that
\begin{equation}
f_1(s) \leq h(s) \leq f_2(s) \label{eqn:bounds-on-h}
\end{equation}
for all $s \in [0,1)$ and $h \prec \lambda^\tau_{T'}$.

Let $\{P_k\}^n_{k=1}$, $\{Q_k\}^n_{k=1}$, $\{\alpha_k\}^n_{k=1}$, and $\{\beta_k\}^n_{k=1}$ be as in  Lemma \ref{lem:getting-equal-trace-projections} so that
\[
T' = \sum^n_{k=1} \alpha_k P_k \qqand S' = \sum^n_{k=1} \beta_k Q_k.
\]
Furthermore, for $k \in \{0, 1, \ldots, n\}$, let $s_k = \sum^k_{j=1} \tau(Q_k)$, let
\[
\alpha'_k = \frac{1}{s_k - s_{k-1}} \int^{s_k}_{s_{k-1}} h(s) \, ds,
\]
and let $T_0 = \sum^n_{k=1} \alpha'_k P_k$.  Notice $\alpha'_k \geq \alpha'_{k+1}$ for all $k$ as $h$ is non-increasing.  Since $h \prec \lambda^\tau_{T'}$, Examples \ref{exam:finite-spectrum-eigenvalue-functions} and \ref{exam:averaging} imply that $T_0 \prec_\tau T$.  Hence Theorem \ref{thm:classification-of-convex-hull} implies $T_0 \in \cconv(\U(T'))$.  

Since $\fA$ has strong comparison of projections, there exists a unitary $U \in \fA$ such that $U^*P_kU = Q_k$ for all $k$.  Therefore $U^*T_0U = \sum^n_{k=1} \alpha'_k Q_k$.  However, due to the definition of $\alpha'_k$,  equation (\ref{eqn:bounds-on-h}), and  Example \ref{exam:finite-spectrum-eigenvalue-functions}, we see that
\[
\left\|U^*T_0U - S'\right\| \leq \alpha + 2\epsilon.
\]
Therefore, since $U^*T_0U \in \cconv(\U(T'))$, $\left\|T - T'\right\| < \epsilon$, and $\left\|S - S'\right\| < \epsilon$, we obtain that
\[
\dist(S, \conv(\U(T))) \leq \alpha + 4\epsilon
\]
thereby completing the proof.
\end{proof}

Since tracial states are norm continuous, Theorem \ref{thm:distance-to-convex-hull} immediately implies the following.

\begin{cor}
Let $\fA$ be a unital C$^*$-algebra with real rank zero that has strong comparison of projections with respect to a faithful tracial state $\tau$.   If $S, T \in \fA$ are self-adjoint, then
\[
\dist(\conv(\U(S)), \conv(\U(T))) = |\tau(S) - \tau(T)|.
\]
\end{cor}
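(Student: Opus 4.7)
The plan is to establish both inequalities separately, with the lower bound coming from the norm continuity of $\tau$ and the upper bound coming from Theorem \ref{thm:distance-to-convex-hull} applied to a scalar multiple of the identity.

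For the lower bound $\dist(\conv(\U(S)), \conv(\U(T))) \geq |\tau(S) - \tau(T)|$, I would observe that for any $S' \in \conv(\U(S))$ and $T' \in \conv(\U(T))$ the trace property gives $\tau(S') = \tau(S)$ and $\tau(T') = \tau(T)$. Since $\tau$ is a state on $\fA$ and hence has norm one,
\[
|\tau(S) - \tau(T)| = |\tau(S' - T')| \leq \|S' - T'\|,
\]
and taking the infimum over such $S', T'$ delivers the desired inequality.

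For the upper bound, the idea is to exploit that scalars in the closed convex hulls are available. By Theorem \ref{thm:scalars-in-convex-hulls}, $\tau(S) I_\fA \in \cconv(\U(S))$, so the infimum distance $\dist(\conv(\U(S)), \conv(\U(T)))$ is bounded above by $\dist(\tau(S) I_\fA, \conv(\U(T)))$. I would then apply Theorem \ref{thm:distance-to-convex-hull} with $\tau(S) I_\fA$ playing the role of $S$. The eigenvalue function of $\tau(S) I_\fA$ is identically $\tau(S)$, so the formula reduces to
\[
\dist(\tau(S) I_\fA, \conv(\U(T))) = \sup_{t \in (0,1)} \frac{1}{t}\max\{g(t), h(t)\}
\]
where $g(t) = \int_0^t (\tau(S) - \lambda^\tau_T(s))\,ds$ and $h(t) = \int_{1-t}^1 (\lambda^\tau_T(s) - \tau(S))\,ds$.

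Both $g$ and $h$ vanish at $0$. Because $\lambda^\tau_T$ is non-increasing (Theorem \ref{thm:properties-of-eigenvalue-functions}), the derivatives $g'(t) = \tau(S) - \lambda^\tau_T(t)$ and $h'(t) = \lambda^\tau_T(1-t) - \tau(S)$ are both non-decreasing, so $g$ and $h$ are convex on $(0,1)$. The standard secant-slope argument for convex functions vanishing at the origin shows that $g(t)/t$ and $h(t)/t$ are non-decreasing, hence the maximum of the two is non-decreasing and the supremum is attained in the limit $t \nearrow 1$. This limit equals $\max\{g(1), h(1)\} = \max\{\tau(S) - \tau(T),\, \tau(T) - \tau(S)\} = |\tau(S) - \tau(T)|$, completing the proof. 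The only place that requires any care is verifying that the seemingly two-sided formula in Theorem \ref{thm:distance-to-convex-hull} collapses to $|\tau(S) - \tau(T)|$ when applied to a scalar; the convexity monotone-ratio observation handles this cleanly and is the one non-routine ingredient.
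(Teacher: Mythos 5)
Your proof is correct, and both halves check out. The lower bound via $|\tau(S'-T')| \leq \|S'-T'\|$ (since $\|\tau\|=1$ and $\tau$ is constant on each convex hull) is exactly the role the ``norm continuity of tracial states'' plays in the paper's one-line remark. The upper bound is also valid: the convexity and secant-slope observation correctly shows that the Theorem \ref{thm:distance-to-convex-hull} expression, evaluated at $\tau(S)I_\fA$, is monotone in $t$ and peaks at $t\nearrow 1$, yielding $\max\{\tau(S)-\tau(T),\,\tau(T)-\tau(S)\}=|\tau(S)-\tau(T)|$.

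That said, you have made the upper bound harder than it needs to be. Theorem \ref{thm:scalars-in-convex-hulls} places $\tau(S)I_\fA \in \cconv(\U(S))$ \emph{and} $\tau(T)I_\fA \in \cconv(\U(T))$; since the distance between two sets equals the distance between their closures, this gives
\[
\dist(\conv(\U(S)), \conv(\U(T))) \leq \|\tau(S)I_\fA - \tau(T)I_\fA\| = |\tau(S)-\tau(T)|
\]
at once, with no need to invoke Theorem \ref{thm:distance-to-convex-hull} or analyze any supremum. The paper gives no explicit argument (the corollary is stated as an immediate consequence), so I cannot say which route the author had in mind, but the shorter argument is probably closer to ``immediate.'' Your convexity computation does have one small side benefit: it identifies the exact value $\dist(\tau(S)I_\fA, \conv(\U(T))) = |\tau(S)-\tau(T)|$ rather than merely an upper bound, though that extra precision is not needed for the corollary.
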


Using the technology of Section \ref{sec:Preliminaries}, we are also able to study arbitrary operators based on their singular value functions.  The following object will play the role of the singular value decomposition of matrices for infinite dimensional C$^*$-algebras.
\begin{defn}
For a unital C$^*$-algebra $\fA$ and an element $T \in \fA$, the \emph{closed two-sided unitary orbit} of $T$ is
\[
\N(T) = \overline{ \{UTV \, \mid \, U, V \text{ unitaries in }\fA\}  }.
\]
\end{defn}

Our goal is to classify closed two-sided unitary orbits using singular values.  We restrict to C$^*$-algebras with stable rank one as the following well-known lemma directly implies every operator almost has a polar decomposition.
\begin{lem}
\label{lem:close-absolute-values}
Let $\fA$ be a unital C$^*$-algebra and let $M, \epsilon > 0$. There exists a $0 < \delta < \epsilon$ such that if $A, B \in \fA$, $\left\|A\right\| \leq M$, and $\left\|A - B\right\| < \delta$, then $\left\| |A| - |B|\right\| < \epsilon$.
\end{lem}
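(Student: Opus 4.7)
The plan is to reduce the problem to the fact that $|X|=\sqrt{X^*X}$ and exploit that the scalar square-root function is uniformly continuous on compact intervals, then lift this uniform continuity from scalars to C$^*$-algebra elements via continuous functional calculus.

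First I would record the auxiliary estimate bounding $\|A^*A-B^*B\|$ by $\|A-B\|$.  Indeed, if $\|A\|\leq M$ and $\|A-B\|<\delta<\epsilon$ then $\|B\|\leq M+\epsilon$, and
\[
\|A^*A-B^*B\|\leq \|A^*\|\,\|A-B\|+\|A-B\|\,\|B\|\leq (2M+\epsilon)\,\delta.
\]
Thus $|A|^2=A^*A$ and $|B|^2=B^*B$ are positive operators of norm at most $R^2:=(M+\epsilon)^2$ that are close in norm whenever $\delta$ is small.  The task is then to deduce smallness of $\bigl\||A|-|B|\bigr\|=\bigl\|\sqrt{A^*A}-\sqrt{B^*B}\bigr\|$ from smallness of $\|A^*A-B^*B\|$.

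Next, I would invoke the standard uniform-continuity-lifting principle: for any continuous $f:[0,R^2]\to\bR$ and any $\eta'>0$ there exists $\eta>0$ such that whenever $P,Q\in\fA$ are positive with $\|P\|,\|Q\|\leq R^2$ and $\|P-Q\|<\eta$, one has $\|f(P)-f(Q)\|<\eta'$.  Applied to $f(t)=\sqrt{t}$, this gives the desired $\bigl\|\sqrt{A^*A}-\sqrt{B^*B}\bigr\|<\epsilon$.  To prove this lifting principle, I would use Stone--Weierstrass to approximate $f$ uniformly on $[0,R^2]$ by a polynomial $p$ within $\epsilon/3$, and note that for a fixed polynomial $p$ the map $X\mapsto p(X)$ is Lipschitz on the norm-ball of radius $R^2$, with a Lipschitz constant depending only on $p$ and $R^2$ (since $\|X^n-Y^n\|\leq n(R^2)^{n-1}\|X-Y\|$ by the telescoping identity $X^n-Y^n=\sum_{k=0}^{n-1}X^k(X-Y)Y^{n-1-k}$).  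A three-$\epsilon$ argument then yields the claim.

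Putting the two pieces together: given $\epsilon>0$, choose $\eta>0$ so that $\|P-Q\|<\eta$ (with $P,Q\geq 0$ of norm $\leq R^2$) implies $\|\sqrt{P}-\sqrt{Q}\|<\epsilon$, and then choose $\delta=\min\bigl\{\epsilon,\,\eta/(2M+\epsilon)\bigr\}$ so that $\|A-B\|<\delta$ forces $\|A^*A-B^*B\|<\eta$ by the estimate of the first paragraph.  The only mildly subtle point is the lifting step, but it is entirely standard; no genuine obstacle arises.
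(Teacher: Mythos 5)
The paper states this lemma as ``well-known'' and offers no proof, so there is no internal argument to compare against. Your proof is correct and entirely standard: the estimate $\|A^*A-B^*B\|\leq(2M+\epsilon)\|A-B\|$ on the ball of radius $M$ (with the automatic bound $\|B\|< M+\epsilon$), followed by the lift of uniform continuity of $t\mapsto\sqrt t$ on $[0,(M+\epsilon)^2]$ to positive elements via Stone--Weierstrass and the telescoping Lipschitz bound $\|X^n-Y^n\|\leq n(R^2)^{n-1}\|X-Y\|$, is watertight; the three-$\epsilon$ argument makes the choice of $\eta$ independent of $P,Q$, which is exactly the uniformity needed. For the record, one can shortcut the lifting step by citing the operator inequality $\|\sqrt P-\sqrt Q\|\leq\sqrt{\|P-Q\|}$ valid for positive $P,Q$ in any C$^*$-algebra (a consequence of operator monotonicity of the square root and $\sqrt 0=0$); combined with your first estimate this gives $\||A|-|B|\|\leq\sqrt{(2M+\epsilon)\delta}$ and hence an explicit $\delta$. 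Your route is more elementary and self-contained; the alternative is shorter but invokes a heavier fact. Either is a legitimate proof of the lemma.
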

\begin{cor}
\label{cor:almost-polar-decompositions}
Let $\fA$ be a unital C$^*$-algebra with stable rank one and let $T \in \fA$.  Then for all $\epsilon > 0$ there exists a unitary $U \in \fA$ such that $\left\|T - U|T|\right\| < \epsilon$.
\end{cor}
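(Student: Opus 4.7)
The plan is to approximate $T$ by an invertible element, exploit the existence of an honest polar decomposition for invertibles, and then use the continuity of the absolute value (Lemma \ref{lem:close-absolute-values}) to transfer the result back to $T$. Concretely, given $\epsilon > 0$, I will first invoke Lemma \ref{lem:close-absolute-values} with the parameter $M = \left\|T\right\| + 1$ and with $\epsilon/2$ in place of $\epsilon$ to obtain some $0 < \delta < \epsilon$ controlling how close absolute values must be.

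Next I would use the stable-rank-one hypothesis: the invertible elements of $\fA$ are dense, so I can choose an invertible $T' \in \fA$ with $\left\|T - T'\right\| < \min\{\delta, \epsilon/2\}$, in particular $\left\|T'\right\| \leq M$. Since $T'$ is invertible, $T'^*T'$ is a positive invertible element, so $|T'| = (T'^*T')^{1/2}$ is invertible in $\fA$, and I can define $U := T'|T'|^{-1}$. A short direct computation gives $U^*U = |T'|^{-1}(T'^*T')|T'|^{-1} = I_\fA$, and $UU^* T' = T'(T'^*T')^{-1}T'^*T' = T'$, so the invertibility of $T'$ forces $UU^* = I_\fA$. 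Hence $U$ is a unitary implementing the polar decomposition $T' = U|T'|$.

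Finally, combining these ingredients by the triangle inequality,
\[
\left\|T - U|T|\right\| \leq \left\|T - T'\right\| + \left\|U|T'| - U|T|\right\| = \left\|T - T'\right\| + \left\||T'| - |T|\right\| < \tfrac{\epsilon}{2} + \tfrac{\epsilon}{2} = \epsilon,
\]
where the second term is bounded using Lemma \ref{lem:close-absolute-values} applied to $T$ and $T'$ (both of norm at most $M$, and at distance less than $\delta$). This completes the proof. There is no real obstacle here; the only thing to be careful about is the verification that $T'|T'|^{-1}$ is genuinely a unitary rather than merely a partial isometry, which hinges on the invertibility of $T'$ in $\fA$.
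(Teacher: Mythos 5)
Your proof is correct and follows exactly the route the paper intends: approximate $T$ by an invertible element (using stable rank one), take the genuine polar decomposition of the invertible approximant, and transfer back via Lemma \ref{lem:close-absolute-values}; the paper's own proof is just the one-line version of this argument. One cosmetic remark: the parenthetical ``in particular $\left\|T'\right\| \leq M$'' is not guaranteed by your choice of $\delta$ when $\epsilon$ is large, but it is also not needed, since Lemma \ref{lem:close-absolute-values} only requires a norm bound on $A = T$, not on $B = T'$.
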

\begin{proof}
Use Lemma \ref{lem:close-absolute-values} along with the fact that invertible elements in unital C$^*$-algebras have polar decompositions.
\end{proof}
\begin{lem}
\label{lem:pointwise-convergence-of-singular-values}
Let $\fA$ be a unital C$^*$-algebra with a faithful tracial state $\tau$.  If $(T_n)_{n\geq 1} \subseteq \fA$ converges in norm to $T \in \fA$, then $\mu^\tau_T(s) = \lim_{n\to \infty} \mu^\tau_{T_n}(s)$ for all $s \in [0,1)$.
\end{lem}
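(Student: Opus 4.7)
The plan is very short: reduce from singular value functions to eigenvalue functions of positive operators via the definition $\mu^\tau_T(s)=\lambda^\tau_{|T|}(s)$, then combine the norm continuity of the absolute value operation (Lemma \ref{lem:close-absolute-values}) with the Lipschitz estimate $|\lambda^\tau_S(s)-\lambda^\tau_R(s)|\le\|S-R\|$ from part (\ref{part:difference}) of Theorem \ref{thm:properties-of-eigenvalue-functions}.

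In detail, I would first observe that since $(T_n)_{n\ge 1}$ converges in norm, it is bounded, say by some $M>0$. Fix $\epsilon>0$ and let $\delta>0$ be as in Lemma \ref{lem:close-absolute-values} for this $M$ and $\epsilon$. For all sufficiently large $n$, we have $\|T_n-T\|<\delta$ and $\|T_n\|\le M$, hence $\bigl\||T_n|-|T|\bigr\|<\epsilon$. Since $|T_n|$ and $|T|$ are self-adjoint, part (\ref{part:difference}) of Theorem \ref{thm:properties-of-eigenvalue-functions} gives
\[
\bigl|\mu^\tau_{T_n}(s)-\mu^\tau_T(s)\bigr|=\bigl|\lambda^\tau_{|T_n|}(s)-\lambda^\tau_{|T|}(s)\bigr|\le\bigl\||T_n|-|T|\bigr\|<\epsilon
\]
for all $s\in[0,1)$ and all sufficiently large $n$. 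This yields $\lim_{n\to\infty}\mu^\tau_{T_n}(s)=\mu^\tau_T(s)$, in fact uniformly in $s\in[0,1)$.

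There is no real obstacle here; the only point to be careful about is that the estimate from part (\ref{part:difference}) of Theorem \ref{thm:properties-of-eigenvalue-functions} requires self-adjoint operators, which is why the reduction through the absolute value via Lemma \ref{lem:close-absolute-values} is needed rather than a direct application to $T_n$ and $T$.
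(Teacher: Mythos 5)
Your proposal is correct and follows exactly the same route as the paper's proof: reduce to absolute values via $\mu^\tau_T=\lambda^\tau_{|T|}$, invoke Lemma \ref{lem:close-absolute-values} for norm continuity of $|\cdot|$, and then apply the Lipschitz estimate in part (\ref{part:difference}) of Theorem \ref{thm:properties-of-eigenvalue-functions}. You simply spell out the $\epsilon$--$\delta$ bookkeeping that the paper leaves implicit.
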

\begin{proof}
Recall $\mu^\tau_S(s) = \lambda^\tau_{|S|}(s)$ for all $S \in \fA$.  Since $T = \lim_{n\to \infty} T_n$, we obtain $|T| = \lim_{n\to \infty} |T_n|$ by Lemma \ref{lem:close-absolute-values}.  The result then follows by part (\ref{part:difference}) of Theorem \ref{thm:properties-of-eigenvalue-functions}.
\end{proof}

The following is a generalization of \cite{KS2015}*{Theorem 2.11} to C$^*$-algebras.
\begin{prop}
\label{prop:closed-two-sided-unitary-orbit}
Let $\fA$ be a unital C$^*$-algebra with real rank zero, stable rank one, and strong comparison of projections with respect to a faithful tracial state $\tau$.   If $S, T \in \fA$, then $S \in \N(T)$ if and only if $\mu^\tau_S(s) = \mu^\tau_T(s)$ for all $s \in [0,1)$.
\end{prop}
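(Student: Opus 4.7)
The plan is to prove the two directions separately, with the forward direction being essentially a continuity and unitary-invariance computation, and the reverse direction built out of Theorem \ref{thm:distance-between-unitary-orbits} applied to $|S|$ and $|T|$ together with approximate polar decompositions from Corollary \ref{cor:almost-polar-decompositions}.

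For the forward direction, suppose $S \in \N(T)$, so there exist unitaries $U_n, V_n \in \fA$ with $U_n T V_n \to S$ in norm. Because $|U_n T V_n|^2 = V_n^* T^* T V_n = V_n^* |T|^2 V_n$, the continuous functional calculus gives $|U_n T V_n| = V_n^* |T| V_n$. Then part (\ref{part:unitary}) of Theorem \ref{thm:properties-of-eigenvalue-functions} yields $\mu^\tau_{U_n T V_n}(s) = \lambda^\tau_{|T|}(s) = \mu^\tau_T(s)$ for every $s \in [0,1)$ and every $n$. Lemma \ref{lem:pointwise-convergence-of-singular-values} gives $\mu^\tau_S(s) = \lim_{n\to\infty} \mu^\tau_{U_nTV_n}(s) = \mu^\tau_T(s)$.

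For the reverse direction, assume $\mu^\tau_S = \mu^\tau_T$, which means $\lambda^\tau_{|S|} = \lambda^\tau_{|T|}$. Theorem \ref{thm:distance-between-unitary-orbits} then tells us that $|S|$ and $|T|$ are approximately unitarily equivalent: given any $\epsilon > 0$ there is a unitary $W \in \fA$ with $\left\| |S| - W^* |T| W \right\| < \epsilon/3$. Separately, since $\fA$ has stable rank one, Corollary \ref{cor:almost-polar-decompositions} yields unitaries $V_1, U_1 \in \fA$ with $\left\| S - V_1 |S| \right\| < \epsilon/3$ and $\left\| T - U_1 |T| \right\| < \epsilon/3$; multiplying this last inequality on the left by $U_1^*$ gives $\left\| |T| - U_1^* T \right\| < \epsilon/3$. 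Chaining the three estimates,
\[
\left\| S - V_1 W^* U_1^* T W \right\| \leq \left\| S - V_1 |S| \right\| + \left\| V_1 |S| - V_1 W^* |T| W \right\| + \left\| V_1 W^* |T| W - V_1 W^* U_1^* T W \right\| < \epsilon,
\]
since the unitaries $V_1$ and $V_1 W^*$ have norm one. Setting $U := V_1 W^* U_1^*$ and $V := W$ gives unitaries with $\left\| S - U T V \right\| < \epsilon$, so $S \in \N(T)$.

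I do not expect any serious obstacle here: the forward direction is purely a unitary-invariance plus continuity argument, and the reverse direction is a standard three-term interpolation where the only real input is combining the self-adjoint classification (Theorem \ref{thm:distance-between-unitary-orbits}) with approximate polar decomposition. The only point to be careful with is that both polar decompositions must be used in the right orientation, so that the intermediate operator $V_1 W^* U_1^* T W$ really is of the form $U T V$ with $U, V$ unitary; this is handled by applying Corollary \ref{cor:almost-polar-decompositions} to $T$ (to pull a unitary out on the left) and to $S$ (to pull a unitary out on the left as well) and sandwiching the approximate unitary equivalence of $|S|$ and $|T|$ in between.
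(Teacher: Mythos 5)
Your proof is correct and follows essentially the same route as the paper: the forward direction uses $|UTV| = V^*|T|V$ with unitary invariance and Lemma \ref{lem:pointwise-convergence-of-singular-values}, and the reverse direction combines Theorem \ref{thm:distance-between-unitary-orbits} applied to $|S|$ and $|T|$ with the approximate polar decompositions from Corollary \ref{cor:almost-polar-decompositions} in exactly the same three-term estimate. The only cosmetic difference is your use of $\epsilon/3$ in place of the paper's final bound of $3\epsilon$.
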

\begin{proof}
If $U, V \in \fA$ are unitaries, then 
\[
\mu^\tau_{UTV}(s) = \lambda^\tau_{|UTV|}(s) = \lambda^\tau_{V^*|T|V}(s) = \lambda^\tau_{|T|}(s) = \mu^\tau_T(s)
\]
for all $s \in [0,1)$ by part (\ref{part:unitary}) of Theorem \ref{thm:properties-of-eigenvalue-functions}.  Consequently, if $S \in \N(T)$, then $\mu^\tau_S(s) = \mu^\tau_T(s)$ for all $s \in [0,1)$ by Lemma \ref{lem:pointwise-convergence-of-singular-values}.

For the converse direction, suppose $\mu^\tau_S(s) = \mu^\tau_T(s)$ for all $s \in [0,1)$ and let $\epsilon > 0$.  By Corollary \ref{cor:almost-polar-decompositions}, there exists unitaries $U, V \in \fA$ such that
\[
\left\|T - U|T|\right\| < \epsilon \qqand \left\|S - V|S|\right\| < \epsilon.
\]
Furthermore, since
\[
\lambda^\tau_{|T|}(s) = \mu^\tau_T(s) = \mu^\tau_S(s) = \lambda^\tau_{|S|}(s)
\]
for all $s \in [0,1)$, Theorem \ref{thm:distance-between-unitary-orbits} implies there exists a unitary $W \in \fA$ such that $\left\|W^*|T|W - |S|\right\| < \epsilon$. Hence
\[
\left\|VW^*U^*TW - S\right\| \leq 2\epsilon + \left\|VW^*|T'|W - V|S'|\right\| < 3\epsilon.
\]
Since $\epsilon > 0$ was arbitrary, the proof is complete.
\end{proof}

Our next results provide descriptions of all operators whose eigenvalue (singular value) function is dominated by another operators eigenvalue (singular value) function.  In particular, these notions of majorization are related to Cuntz equivalence, but are significantly stronger (i.e. requiring bounded sequences for approximations).  The following result is a generalization of \cite{H1987}*{Theorems 3.1}.

\begin{prop}
\label{prop:compression-closure}
Let $\fA$ be a unital C$^*$-algebra with real rank zero that has strong comparison of projections with respect to a faithful tracial state $\tau$.   If $S, T \in \fA$ are positive operators, then
\[
S \in \overline{ \{A^*TA \, \mid \, A \in \fA, \left\|A \right\| \leq 1\}  }
\]
if and only if $\lambda^\tau_S(s) \leq \lambda^\tau_T(s)$ for all $s \in [0,1)$.
\end{prop}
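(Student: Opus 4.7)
The plan is to prove the two implications separately, with the forward direction being essentially immediate from the general machinery on eigenvalue functions and the converse mirroring the finite-spectrum reduction strategy employed earlier in the paper. For the ``only if'' direction, if $S = \lim_n A_n^* T A_n$ with $\|A_n\| \leq 1$, then part (\ref{part:conjugation}) of Theorem \ref{thm:properties-of-eigenvalue-functions} gives
\[
\lambda^\tau_{A_n^* T A_n}(s) \leq \|A_n\|^2 \lambda^\tau_T(s) \leq \lambda^\tau_T(s),
\]
and the Lipschitz bound in part (\ref{part:difference}) of the same theorem lets me pass to the limit to conclude $\lambda^\tau_S(s) \leq \lambda^\tau_T(s)$ for all $s \in [0,1)$.

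For the converse direction, assume $\lambda^\tau_S(s) \leq \lambda^\tau_T(s)$ and fix $\epsilon > 0$. I would first invoke Lemma \ref{lem:reduction-to-finite-spectrum} (whose conclusions are tailored to this setting) to produce positive operators $S', T' \in \fA$ of finite spectrum with $\|S - S'\| < \epsilon$, $\|T - T'\| < \epsilon$, and $\lambda^\tau_{S'}(s) \leq \lambda^\tau_{T'}(s)$ for all $s$. Lemma \ref{lem:getting-equal-trace-projections} then supplies common partitions $\{P_k\}_{k=1}^n$ and $\{Q_k\}_{k=1}^n$ of $I_\fA$ with $\tau(P_k) = \tau(Q_k)$, together with non-increasing sequences $\alpha_k, \beta_k$ so that $T' = \sum_k \alpha_k P_k$ and $S' = \sum_k \beta_k Q_k$. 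Positivity forces $\alpha_k, \beta_k \geq 0$, and Example \ref{exam:finite-spectrum-eigenvalue-functions} identifies the eigenvalue functions as step functions on the intervals $[s_{k-1}, s_k)$, whence the hypothesis $\lambda^\tau_{S'} \leq \lambda^\tau_{T'}$ translates into the clean pointwise inequality $0 \leq \beta_k \leq \alpha_k$. Strong comparison of projections now produces a unitary $U \in \fA$ with $U^* P_k U = Q_k$ for all $k$, so $U^* T' U = \sum_k \alpha_k Q_k$ is diagonalized on the $Q_k$'s.

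The explicit witness is then $A := U B$, where
\[
B := \sum_{k:\, \alpha_k > 0} \sqrt{\beta_k/\alpha_k}\, Q_k;
\]
the orthogonality of the $Q_k$'s together with $\beta_k \leq \alpha_k$ gives $\|B\|^2 = \max_k (\beta_k/\alpha_k) \leq 1$, hence $\|A\| \leq 1$, while a short direct calculation using $Q_k Q_\ell = \delta_{k\ell} Q_k$ shows $A^* T' A = B^* U^* T' U B = \sum_k \beta_k Q_k = S'$ exactly. A triangle inequality $\|A^* T A - S\| \leq \|T - T'\| + \|A^* T' A - S'\| + \|S' - S\| < 2\epsilon$ and arbitrariness of $\epsilon$ finish the argument. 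The only subtlety I anticipate is the indices where $\alpha_k = 0$, but there $\beta_k \leq \alpha_k = 0$ forces $\beta_k = 0$, so those summands are harmlessly omitted from $B$; beyond that bookkeeping, the construction of $A$ is essentially forced by the structure of the decomposition, making the reduction to finite spectrum and the matching of projections the real content of the proof.
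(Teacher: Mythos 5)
Your proposal is correct and follows essentially the same route as the paper: the forward implication via parts (\ref{part:conjugation}) and (\ref{part:difference}) of Theorem \ref{thm:properties-of-eigenvalue-functions}, and the converse by reducing to finite spectrum (Lemma \ref{lem:reduction-to-finite-spectrum}), invoking Lemma \ref{lem:getting-equal-trace-projections} and strong comparison to match the projections with a unitary $U$, and then writing down exactly the contraction $\sum_k \sqrt{\beta_k/\alpha_k}\,Q_k$ (with the degenerate indices dropped) so that conjugation lands precisely on $S'$. Your $A = UB$ is just the paper's $U A$ repackaged; the arguments coincide.
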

\begin{proof}
If $A \in \fA$ is such that $\left\|A\right\| \leq 1$, then
\[
\lambda^\tau_{A^*TA}(s) \leq \left\|A\right\|^2 \lambda^\tau_T(s) \leq \lambda^\tau_T(s)
\]
for all $s \in [0,1)$ by part (\ref{part:conjugation}) of Theorem \ref{thm:properties-of-eigenvalue-functions}.  Consequently, one direction follows from part (\ref{part:difference}) of Theorem \ref{thm:properties-of-eigenvalue-functions}.

For the other direction, suppose $\lambda^\tau_S(s) \leq \lambda^\tau_T(s)$ for all $s \in [0,1)$.  Let $\epsilon > 0$.  By Lemma \ref{lem:reduction-to-finite-spectrum} there exists positive operators $S', T' \in \fA$ with finite spectra such that $\left\|T - T'\right\| < \epsilon$, $\left\|S - S'\right\| < \epsilon$, and $\lambda^\tau_{S'}(s) \leq \lambda^\tau_{T'}(s)$ for all $s \in [0,1)$.  Let $\{P_k\}^n_{k=1}$, $\{Q_k\}^n_{k=1}$, $\{\alpha_k\}^n_{k=1}$, and $\{\beta_k\}^n_{k=1}$ be as in Lemma \ref{lem:getting-equal-trace-projections} so that
\[
T' = \sum^n_{k=1} \alpha_k P_k \qqand S' = \sum^n_{k=1} \beta_k Q_k.
\]
Since $T', S' \geq 0$, $\alpha_k, \beta_k \geq 0$ for all $k$.  Furthermore, Example \ref{exam:finite-spectrum-eigenvalue-functions} along with the fact that $\lambda^\tau_{S'}(s) \leq \lambda^\tau_{T'}(s)$ for all $s \in [0,1)$ implies $\beta_k \leq \alpha_k$ for all $k$.

Since $\fA$ has strong comparison of projections, there exists a unitary $U \in \fA$ such that $U^*P_kU = Q_k$ for all $k$ so that $U^*T'U = \sum^n_{k=1} \alpha_k Q_k$.  For each $k$, let
\[
\gamma_k = \left\{
\begin{array}{ll}
\sqrt{\frac{\beta_k}{\alpha_k}} & \mbox{if } \beta_k \neq 0 \\
0 & \mbox{if } \beta_k = 0 
\end{array} \right. .
\]
Consequently, if $A = \sum^n_{k=1} \gamma_k Q_k \in \fA$, then $\left\|A\right\| \leq 1$ and $A^*U^*T'UA = S'$.  Hence
\[
\left\|A^*U^*TUA - S\right\| \leq 2\epsilon + \left\|A^*U^*T'UA - S'\right\| = 2\epsilon.
\]
As $\epsilon > 0$, the result follows.
\end{proof}

\begin{prop}
Let $\fA$ be a unital C$^*$-algebra with real rank zero, stable rank one, and strong comparison of projections with respect to a faithful tracial state $\tau$.   If $S, T \in \fA$, then
\[
S \in \overline{ \{ATB \, \mid \, A, B \in \fA, \left\|A\right\|, \left\|B \right\| \leq 1\}  }
\]
if and only if $\mu^\tau_S(s) \leq \mu^\tau_T(s)$ for all $s \in [0,1)$.
\end{prop}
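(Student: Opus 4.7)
The proposition is a singular-value analogue of Proposition \ref{prop:compression-closure}, so the plan is to extract the positive case from that result and then use polar decompositions (made available by stable rank one) to translate between $T$ and $|T|$, and between $S$ and $|S|$.

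\textbf{Forward direction.} Suppose $S = \lim_{n\to\infty} A_n T B_n$ with $\|A_n\|, \|B_n\| \leq 1$. By part (\ref{part:mu-contractive}) of Theorem \ref{thm:properties-of-singular-value-functions},
\[
\mu^\tau_{A_n T B_n}(s) \leq \|A_n\|\|B_n\|\mu^\tau_T(s) \leq \mu^\tau_T(s)
\]
for all $s \in [0,1)$ and all $n$. Lemma \ref{lem:pointwise-convergence-of-singular-values} then yields $\mu^\tau_S(s) \leq \mu^\tau_T(s)$ for all $s \in [0,1)$.

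\textbf{Reverse direction.} Suppose $\mu^\tau_S(s) \leq \mu^\tau_T(s)$ for all $s \in [0,1)$, which unwinds to $\lambda^\tau_{|S|}(s) \leq \lambda^\tau_{|T|}(s)$. Since $|S|$ and $|T|$ are positive and $\fA$ satisfies the hypotheses of Proposition \ref{prop:compression-closure}, that result gives, for every $\epsilon > 0$, a contraction $C \in \fA$ with $\left\| |S| - C^*|T|C\right\| < \epsilon$. Next, applying Corollary \ref{cor:almost-polar-decompositions} to both $S$ and $T$, choose unitaries $U, V \in \fA$ with $\|T - U|T|\| < \epsilon$ and $\|S - V|S|\| < \epsilon$; the first of these rearranges to $\| |T| - U^*T\| < \epsilon$.

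\textbf{Assembly.} Set $A = VC^*U^*$ and $B = C$. Then $\|A\| \leq 1$ and $\|B\| \leq 1$. A triangle-inequality estimate gives
\[
\|S - ATB\| \leq \|S - V|S|\| + \|V\|\cdot\bigl\| |S| - C^*|T|C\bigr\| + \|V\|\|C\|^2 \cdot \bigl\| |T| - U^*T\bigr\| < 3\epsilon,
\]
so $S$ lies in the displayed closure. Since $\epsilon > 0$ was arbitrary, we are done.

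\textbf{Potential obstacle.} The only delicate point is that $|T|$ need not admit a genuine polar decomposition in $\fA$, which is exactly why the hypothesis of stable rank one is invoked through Corollary \ref{cor:almost-polar-decompositions}; with that tool in hand, the argument is a routine three-term approximation, and all of the majorization content is already encoded in the previous proposition.
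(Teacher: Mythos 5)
Your proposal is correct and matches the paper's own proof essentially line for line: the forward direction uses part~(\ref{part:mu-contractive}) of Theorem~\ref{thm:properties-of-singular-value-functions} together with Lemma~\ref{lem:pointwise-convergence-of-singular-values}, and the reverse direction combines Proposition~\ref{prop:compression-closure} on $|S|$ and $|T|$ with the near-polar-decompositions from Corollary~\ref{cor:almost-polar-decompositions}, assembling the same triple $V C^* U^* T C$ and the same $3\epsilon$ estimate.
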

\begin{proof}
If $A, B \in \fA$ are such that $\left\|A\right\|, \left\|B\right\| \leq 1$, then
\[
\mu^\tau_{ATB}(s) \leq \left\|A\right\|\left\|B\right\| \mu^\tau_T(s) \leq \mu^\tau_T(s)
\]
for all $s \in [0,1)$ by part (\ref{part:mu-contractive}) of Theorem \ref{thm:properties-of-singular-value-functions}.  Consequently, one direction follows from Lemma \ref{lem:pointwise-convergence-of-singular-values}.

For the other direction, suppose $\mu^\tau_S(s) \leq \mu^\tau_T(s)$ for all $s \in [0,1)$. Consequently $\lambda^\tau_{|S|}(s) \leq \lambda^\tau_{|T|}(s)$ for all $s \in [0,1)$.  Thus Proposition \ref{prop:compression-closure} implies for all $\epsilon > 0$ there exists an $A \in \fA$ with $\left\|A\right\| \leq 1$ such that $\left\||S| - A^*|T|A\right\| < \epsilon$.  Furthermore, Corollary \ref{cor:almost-polar-decompositions} implies there exists unitaries $U, V \in \fA$ such that $\left\|S - V|S|\right\| < \epsilon$ and $\left\|T - U|T|\right\| < \epsilon$.  Thus
\begin{align*}
\left\|S - VA^*U^*TA\right\| \leq \left\|S - VA^*|T|A\right\| + \epsilon \leq \left\|S - V|S|\right\| + 2\epsilon \leq 3\epsilon.
\end{align*}
The result follows.
\end{proof}

%

To complete this section, we desire to analyze the notion of (absolute) submajorization as defined in Definition \ref{defn:absolute-submajorization}.  In particular, we desire an analogue of \cite{H1987}*{Theorem 2.5(2)} for C$^*$-algebras.  The following useful lemma shows if one positive operator submajorizes an operator, then conjugating by a specific contractive operator almost yields majorization.
\begin{lem}
\label{lem:getting-majorization-from-absolute-majorization}
Let $\fA$ be a unital C$^*$-algebra with real rank zero and strong comparison of projections with respect to a faithful tracial state $\tau$.   If $S, T \in \fA$ are positive operators such that $S \prec^w_\tau T$, then for all $\epsilon > 0$ there exists positive operators $S', T' \in \fA$ and  an $A \in \fA$ with $\left\|A\right\| \leq 1$ such that 
\[
\left\|S - S'\right\| \leq \epsilon, \quad \left\|T - T'\right\| \leq \epsilon, \qand S' \prec_\tau A^*T'A.
\]
\end{lem}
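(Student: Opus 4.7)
The plan is to reduce to the case of finite spectrum using Lemma \ref{lem:reduction-to-finite-spectrum}, and then construct $A$ as the product of a unitary aligning the spectral projections of $S'$ and $T'$ with a contractive truncation that absorbs the trace deficit into a single spectral block. Applying parts~(2) and~(4) of Lemma \ref{lem:reduction-to-finite-spectrum} yields positive operators $S', T' \in \fA$ of finite spectrum with $\|S - S'\|, \|T - T'\| \leq \epsilon$ and $S' \prec^w_\tau T'$. An application of Lemma \ref{lem:getting-equal-trace-projections} then furnishes a decomposition
\[
T' = \sum_{k=1}^n \alpha_k P_k, \qquad S' = \sum_{k=1}^n \beta_k Q_k,
\]
with $\{P_k\}$ and $\{Q_k\}$ pairwise orthogonal projections summing to $I_\fA$, matched traces $\tau(P_k) = \tau(Q_k)$, and non-negative decreasing sequences $\alpha_k$ and $\beta_k$.

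Using strong comparison of projections, I will select a unitary $U \in \fA$ with $U^* P_k U = Q_k$ for every $k$, so that $U^* T' U = \sum_k \alpha_k Q_k$. A pure unitary conjugation cannot suffice because submajorization permits $\tau(S') < \tau(T')$, and uniform scaling $A = cI$ shrinks the leading eigenvalues of $T'$ below those of $S'$. Instead, assuming $\tau(T') > 0$ (otherwise $S' = T' = 0$ and $A = 0$ works), let $k_0$ be the smallest index for which $\sum_{k=1}^{k_0} \alpha_k \tau(Q_k) \geq \tau(S')$, and pick $c \in [0, 1]$ with
\[
\sum_{k=1}^{k_0-1} \alpha_k \tau(Q_k) + c^2 \alpha_{k_0} \tau(Q_{k_0}) = \tau(S').
\]
Set $A := U\bigl( \sum_{k=1}^{k_0-1} Q_k + c Q_{k_0} \bigr)$. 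Then $\|A\| \leq 1$ and a direct computation yields
\[
A^* T' A = \sum_{k=1}^{k_0-1} \alpha_k Q_k + c^2 \alpha_{k_0} Q_{k_0},
\]
whose eigenvalue function, by Example \ref{exam:finite-spectrum-eigenvalue-functions}, equals $\alpha_k$ on $[s_{k-1}, s_k)$ for $k < k_0$, equals $c^2 \alpha_{k_0}$ on $[s_{k_0-1}, s_{k_0})$, and vanishes thereafter, where $s_k = \sum_{j \leq k} \tau(Q_j)$.

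It then remains to verify $S' \prec_\tau A^* T' A$, which splits into three intervals. For $t \leq s_{k_0-1}$, $\int_0^t \lambda^\tau_{A^*T'A}$ coincides with $\int_0^t \lambda^\tau_{T'}$ and dominates $\int_0^t \lambda^\tau_{S'}$ by submajorization. On $[s_{k_0-1}, s_{k_0}]$ both eigenvalue functions are constant, so the difference of integrals is affine in $t$ and hence non-negative throughout once it is non-negative at both endpoints (the value at $s_{k_0}$ reduces to $\sum_{k=k_0+1}^n \beta_k \tau(Q_k) \geq 0$). For $t \geq s_{k_0}$, $\int_0^t \lambda^\tau_{A^*T'A} = \tau(S') \geq \int_0^t \lambda^\tau_{S'}$, with equality at $t = 1$ supplying the trace match required by Definition \ref{defn:majorization}. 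The principal obstacle is the initial choice of $A$: one must identify the correct hybrid of unitary alignment and partial truncation so that the trace deficit $\tau(T') - \tau(S')$ is absorbed in exactly one block without disturbing the ordering of eigenvalues; after locating $k_0$ and $c$, the remainder is a direct bookkeeping calculation on step functions.
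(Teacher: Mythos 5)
Your proof is correct and follows the paper's own approach essentially step for step: reduce to finite spectrum via Lemma \ref{lem:reduction-to-finite-spectrum}, decompose via Lemma \ref{lem:getting-equal-trace-projections}, truncate the last relevant spectral block by a scalar to make the traces match, and verify majorization by the same three-interval casework. The only cosmetic differences are that you insert a unitary $U$ aligning the $P_k$ with the $Q_k$ (the paper skips this, since eigenvalue functions are unitarily invariant so the comparison can be made with $A$ defined directly from the $P_k$'s), and that you carefully put $c$ in $A$ and $c^2$ in $A^*T'A$, which quietly repairs a small slip in the paper where $A = qP_{k'}+\sum_{k<k'}P_k$ is written but $A^*T'A$ is computed as if the coefficient were $\sqrt{q}$ rather than $q$.
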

\begin{proof}
Fix $\epsilon > 0$.  By Lemma \ref{lem:reduction-to-finite-spectrum} there exists positive operators $S', T' \in \fA$ with finite spectra such that 
\[
\left\|S - S'\right\| \leq \epsilon, \quad \left\|T - T'\right\| \leq \epsilon, \qand S' \prec^w_\tau T'.
\]
Let $\{P_k\}^n_{k=1}$, $\{Q_k\}^n_{k=1}$, $\{\alpha_k\}^n_{k=1}$, and $\{\beta_k\}^n_{k=1}$ be as in Lemma \ref{lem:getting-equal-trace-projections} so that
\[
T' = \sum^n_{k=1} \alpha_k P_k \qqand S' = \sum^n_{k=1} \beta_k Q_k.
\]
For each $k \in \{0, 1,\ldots, n\}$, let $s_k = \sum^n_{k=1} \tau(P_k)$.  

Consider the function $f : [0,1] \to \bR$ defined by
\[
f(t) = \int^t_0 \lambda^\tau_{T'}(s) \, ds - \int^1_0 \lambda^\tau_{S'}(s) \, ds.
\]
Since $f$ is continuous, $f(0) \leq 0$, and $f(1) \geq 0$, there exists a $t_0 \in [0,1]$ such that $f(t_0) = 0$.  Let $t' = \sup\{t \in [0,1] \, \mid \, f(t) = 0\}$ and choose $k' \in \{1, \ldots, n\}$ such that $t' \in [s_{k'-1}, s_{k'})$ (with $k' = n$ if $t' =1$).  Notice this implies
\[
\int^{s_{k'-1}}_0 \lambda^\tau_{T'}(s) \, ds \leq \int^1_0 \lambda^\tau_{S'}(s) \, ds \leq \int^{s_{k'}}_0 \lambda^\tau_{T'}(s) \, ds.
\]
Choose $q \in [0,1]$ such that
\[
\int^1_0 \lambda^\tau_{S'}(s) \, ds = \int^{s_{k'-1}}_0 \lambda^\tau_{T'}(s) \, ds + q \int^{s_{k'}}_{s_{k'-1}} \lambda^\tau_{T'}(s) \, ds
\]
and let $A = q P_{k'} + \sum^{k'-1}_{k=1} P_k$.  Clearly $\left\|A\right\| \leq 1$ and
\[
A^*T'A = q \alpha_{k'} P_{k'}+  \sum^{k'-1}_{k=1} \alpha_k P_k.
\]

We claim that $S' \prec_\tau A^*T'A$.  By Example \ref{exam:finite-spectrum-eigenvalue-functions}, we know $\lambda^\tau_{S'}(s) = \beta_k$, $\lambda^\tau_{T'}(s) = \alpha_k$ for all $s \in [s_{k-1}, s_k)$, $\lambda^\tau_{A^*T'A}(s) = \alpha_k$ for all $s \in [s_{k-1}, s_k)$ with $k < k'$, $\lambda^\tau_{A^*T'A}(s) = q \alpha_k$ for all $s \in [s_{k'-1}, s_{k'})$, and $\lambda^\tau_{A^*TA} = 0$ for all $s \geq s_{k'}$.  Consequently, for all $t \in [0, s_{k'-1})$,
\[
\int^t_0 \lambda^\tau_{A^*T'A}(s) - \lambda^\tau_{S'}(s) \, ds = \int^t_0 \lambda^\tau_{T'}(s) - \lambda^\tau_{S'}(s) \, ds \geq 0.
\]

If $t \in [s_{k'-1}, s_{k'})$, then
\[
\int^t_0 \lambda^\tau_{A^*T'A}(s) - \lambda^\tau_{S'}(s) \, ds = \int^{s_{k'-1}}_0 \lambda^\tau_{T'}(s) - \lambda^\tau_{S'}(s) \, ds + (t - s_{k'-1})(q \alpha_{k'} - \beta_{k'}).
\]
If $q \alpha_{k'} \geq \beta_{k'}$, then 
\[
\int^t_0 \lambda^\tau_{A^*T'A}(s) - \lambda^\tau_{S'}(s) \, ds  \geq \int^{s_{k'-1}}_0 \lambda^\tau_{T'}(s) - \lambda^\tau_{S'}(s) \, ds \geq 0.
\]
Otherwise $q \alpha_{k'} < \beta_{k'}$ and
\begin{align*}
& \int^t_0 \lambda^\tau_{A^*T'A}(s) - \lambda^\tau_{S'}(s) \, ds \\
&\geq \int^{s_{k'-1}}_0 \lambda^\tau_{T'}(s) - \lambda^\tau_{S'}(s) \, ds + (s_{k'} - s_{k'-1})(q \alpha_{k'} - \beta_{k'}) \\
&= \int^{s_{k'-1}}_0 \lambda^\tau_{T'}(s) - \lambda^\tau_{S'}(s) \, ds +  q \int^{s_{k'}}_{s_{k'-1}} \lambda^\tau_{T'}(s) \, ds - \int^{s_{k'}}_{s_{k'-1}}\lambda^\tau_{S'}(s) \, ds \\
&= \int^{1}_{s_{k'}}\lambda^\tau_{S'}(s) \, ds \geq0
\end{align*}
as $\lambda^\tau_{S'}(s) \geq 0$ for all $s$ as $S' \geq 0$.  

Finally, if $t \geq s_{k'}$, then
\begin{align*}
& \int^t_0 \lambda^\tau_{A^*T'A}(s) - \lambda^\tau_{S'}(s) \, ds \\
&=  \int^{s_{k'-1}}_0 \lambda^\tau_{T'}(s) \, ds + q \int^{s_{k'}}_{s_{k'-1}} \lambda^\tau_{T'}(s) \, ds  -    \int^t_0 \lambda^\tau_{S'}(s) \, ds \\
&=  \int^1_0 \lambda^\tau_{S'}(s) \, ds  -    \int^t_0 \lambda^\tau_{S'}(s) \, ds \geq 0
\end{align*}
with equality when $t = 1$ as $\lambda^\tau_{S'}(s) \geq 0$ for all $s$ as $S' \geq 0$.  Hence $S' \prec_\tau A^*T'A$.
\end{proof}

\begin{prop}
\label{prop:weak-majorization-characterization}
Let $\fA$ be a unital C$^*$-algebra with real rank zero and strong comparison of projections with respect to a faithful tracial state $\tau$.   If $S, T \in \fA$ are positive operators, then
\[
S \in  \cconv\left(\{A^*TA \, \mid \, A \in \fA, \left\|A\right\|\leq 1\}\right)  
\]
if and only if $S \prec^w_\tau T$.
\end{prop}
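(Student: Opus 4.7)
The plan is to prove both implications, using Lemma \ref{lem:getting-majorization-from-absolute-majorization} for the harder direction. For the forward direction, I would first show that if $A \in \fA$ satisfies $\|A\| \leq 1$, then $A^*TA$ is positive and $\lambda^\tau_{A^*TA}(s) \leq \|A\|^2 \lambda^\tau_T(s) \leq \lambda^\tau_T(s)$ by part (\ref{part:conjugation}) of Theorem \ref{thm:properties-of-eigenvalue-functions}, so $A^*TA \prec^w_\tau T$. For a convex combination $R = \sum_{k=1}^n t_k A_k^* T A_k$ with $\|A_k\| \leq 1$, applying part (\ref{part:mu-convex-integral}) of Theorem \ref{thm:properties-of-singular-value-functions} with $f(x) = x$ inductively gives
\[
\int_0^t \mu^\tau_R(s)\,ds \leq \sum_{k=1}^n t_k \int_0^t \mu^\tau_{A_k^* T A_k}(s)\,ds \leq \int_0^t \mu^\tau_T(s)\,ds
\]
for all $t \in [0,1]$, so $R \prec^w_\tau T$. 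To handle norm limits, I would note that if $R_n \to S$ in norm, then Lemma \ref{lem:close-absolute-values} and part (\ref{part:difference}) of Theorem \ref{thm:properties-of-eigenvalue-functions} give $\mu^\tau_{R_n} \to \mu^\tau_S$ uniformly on $[0,1)$, so the submajorization inequality passes to the limit.

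For the reverse direction, fix $\epsilon > 0$ and apply Lemma \ref{lem:getting-majorization-from-absolute-majorization} to obtain positive operators $S', T' \in \fA$ and $A \in \fA$ with $\|A\| \leq 1$ such that $\|S - S'\| < \epsilon$, $\|T - T'\| < \epsilon$, and $S' \prec_\tau A^*T'A$. Since $A^*T'A$ is self-adjoint, Theorem \ref{thm:classification-of-convex-hull} yields $S' \in \cconv(\U(A^*T'A))$. For any unitary $U \in \fA$, the operator $U^*(A^*T'A)U = (AU)^*T'(AU)$ has the form $B^*T'B$ with $\|B\| = \|AU\| \leq 1$, so
\[
\cconv(\U(A^*T'A)) \subseteq \cconv\{B^*T'B \mid B \in \fA, \|B\| \leq 1\}.
\]
Thus $S'$ belongs to the latter set.

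Finally, I would transfer from $T'$ to $T$: for any convex combination $\sum_k t_k B_k^*T'B_k$ with $\|B_k\| \leq 1$, the corresponding sum $\sum_k t_k B_k^*TB_k$ satisfies
\[
\left\| \sum_k t_k B_k^*T'B_k - \sum_k t_k B_k^*TB_k \right\| \leq \|T - T'\| < \epsilon,
\]
so $\cconv\{B^*T'B \mid \|B\|\leq 1\}$ is contained in the $\epsilon$-neighborhood of $\cconv\{B^*TB \mid \|B\|\leq 1\}$. Combining the estimates shows $\dist(S, \cconv\{B^*TB \mid \|B\|\leq 1\}) \leq 2\epsilon$, and since $\epsilon$ was arbitrary, $S$ lies in the latter closed convex hull.

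The most delicate step is invoking Lemma \ref{lem:getting-majorization-from-absolute-majorization} correctly, since this is where the hypothesis $S \prec^w_\tau T$ is converted into genuine majorization $S' \prec_\tau A^*T'A$ (after small perturbations) so that Theorem \ref{thm:classification-of-convex-hull} can be applied; the remaining arguments are essentially bookkeeping of $\epsilon$-perturbations and the standard properties of eigenvalue and singular value functions from Section \ref{sec:Preliminaries}.
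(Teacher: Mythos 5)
Your proof is correct and takes essentially the same approach as the paper: the forward direction uses parts (\ref{part:positive-multiple}, \ref{part:conjugation}) together with the integral inequality from Theorem \ref{thm:properties-of-eigenvalue-functions}/\ref{thm:properties-of-singular-value-functions} (your invocation of part (\ref{part:mu-convex-integral}) with $f(x)=x$ is interchangeable with the paper's use of part (\ref{part:concave-integral}) since $\mu^\tau=\lambda^\tau$ for positive operators), and the reverse direction combines Lemma \ref{lem:getting-majorization-from-absolute-majorization} with Theorem \ref{thm:classification-of-convex-hull}. You spell out the $T'\to T$ perturbation step that the paper leaves implicit after ``the result follows,'' which is a welcome bit of bookkeeping but not a different argument.
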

\begin{proof}
If $\{A_k\}^n_{k=1} \subseteq \fA$ are such that $\left\|A_k\right\| \leq 1$ for all $k$, $\{t_k\}^n_{k=1} \subseteq [0,1]$ are such that $\sum^n_{k=1} t_k = 1$, and $S' = \sum^n_{k=1} t_k A_k^* T A_k$, then $S' \geq 0$ and
\begin{align*}
\int^t_0 \lambda^\tau_{S'}(s) \, ds &\leq \int^t_0 \sum^n_{k=1} t_k \left\|A_k\right\|^2 \lambda^\tau_{T}(s) \, ds \leq \int^t_0 \lambda^\tau_{T}(s) \, ds 
\end{align*}
by parts (\ref{part:positive-multiple}, \ref{part:conjugation}, \ref{part:concave-integral}) of Theorem \ref{thm:properties-of-eigenvalue-functions}.  Thus one inclusion follows from part (\ref{part:difference}) of Theorem \ref{thm:properties-of-eigenvalue-functions}.

For the other direction, suppose $S \prec^w_\tau T$.  Let $\epsilon > 0$.  By Lemma \ref{lem:getting-majorization-from-absolute-majorization} there exists positive operators $S', T' \in \fA$ and  an $A \in \fA$ with $\left\|A\right\| \leq 1$ such that 
\[
\left\|S - S'\right\| \leq \epsilon, \quad \left\|T - T'\right\| \leq \epsilon, \qand S' \prec_\tau A^*T'A.
\]
As
\[
S' \in \cconv(\U(A^*T'A))
\]
by Theorem \ref{thm:classification-of-convex-hull}, the result follows.
\end{proof}

\begin{prop}
Let $\fA$ be a unital C$^*$-algebra with real rank zero, stable rank one, and strong comparison of projections with respect to a faithful tracial state $\tau$.   If $S, T \in \fA$, then
\[
S \in  \cconv\left(\{ATB \, \mid \, A, B \in \fA, \left\|A\right\|, \left\|B\right\| \leq 1\}\right)  
\]
if and only if $S \prec^w_\tau T$.
\end{prop}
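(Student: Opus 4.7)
The plan is to follow the same pattern used in Proposition \ref{prop:closed-two-sided-unitary-orbit}: bootstrap the positive-operator result (Proposition \ref{prop:weak-majorization-characterization}) to arbitrary operators by combining it with Corollary \ref{cor:almost-polar-decompositions} (which uses stable rank one) and the fundamental identity $\mu^\tau_S(s) = \lambda^\tau_{|S|}(s)$. I will handle the two implications separately.

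For the forward direction, consider a finite convex combination $S' = \sum_{k=1}^n t_k A_k T B_k$ with $\left\|A_k\right\|, \left\|B_k\right\| \leq 1$, $t_k \geq 0$, and $\sum_k t_k = 1$. Applying part (\ref{part:mu-convex-integral}) of Theorem \ref{thm:properties-of-singular-value-functions} with $f(x) = x$ inductively, together with part (\ref{part:mu-scalar}) to pull out the non-negative scalars and part (\ref{part:mu-contractive}) to bound each $\mu^\tau_{A_k T B_k}(s)$ by $\mu^\tau_T(s)$, yields
\[
\int_0^t \mu^\tau_{S'}(s) \, ds \leq \sum_{k=1}^n t_k \int_0^t \mu^\tau_{A_k T B_k}(s) \, ds \leq \int_0^t \mu^\tau_T(s) \, ds
\]
for every $t \in [0,1]$. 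To pass to the norm closure, I invoke Lemma \ref{lem:pointwise-convergence-of-singular-values} for the pointwise convergence of singular value functions along an approximating sequence, followed by dominated convergence (using a uniform norm bound), to conclude that $S \prec^w_\tau T$.

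For the reverse direction, the key observation is that $S \prec^w_\tau T$ is precisely $|S| \prec^w_\tau |T|$ viewed as a submajorization between positive operators. Given $\epsilon > 0$, Proposition \ref{prop:weak-majorization-characterization} produces contractions $A_1, \dots, A_n \in \fA$ and non-negative weights $t_1, \dots, t_n$ summing to $1$ with
\[
\Bigl\| |S| - \sum_{k=1}^n t_k A_k^* |T| A_k \Bigr\| < \epsilon.
\]
Since $\fA$ has stable rank one, Corollary \ref{cor:almost-polar-decompositions} furnishes unitaries $U, V \in \fA$ with $\left\|S - V|S|\right\| < \epsilon$ and $\left\|T - U|T|\right\| < \epsilon$, hence also $\left\||T| - U^*T\right\| < \epsilon$ and $\left\||S| - V^*S\right\| < \epsilon$ by unitarity. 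Substituting $U^*T$ for $|T|$ inside the sum, substituting $V^*S$ for $|S|$, and then multiplying on the left by $V$, one verifies that the finite sum $\sum_{k=1}^n t_k (V A_k^* U^*) T A_k$ lies within $3\epsilon$ of $S$ in norm; each summand has the form $A' T B'$ with $\left\|V A_k^* U^*\right\| \leq \left\|A_k\right\| \leq 1$ and $\left\|A_k\right\| \leq 1$ since $U, V$ are unitaries. Letting $\epsilon \to 0$ completes the proof. The main technical matter is the error bookkeeping in this last step, but it is essentially identical to what was done in the proof of Proposition \ref{prop:closed-two-sided-unitary-orbit}; no genuinely new ingredient is required.
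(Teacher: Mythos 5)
Your proof is correct and follows essentially the same route as the paper: the forward direction combines the scalar, contractivity, and additivity inequalities for singular value functions with the continuity of $\mu^\tau$ under norm limits, and the reverse direction reduces to the positive case via Proposition~\ref{prop:weak-majorization-characterization} and then uses the almost-polar decompositions from Corollary~\ref{cor:almost-polar-decompositions} (stable rank one) and Lemma~\ref{lem:close-absolute-values} to convert $A_k^*|T|A_k$ into $A'_kTB'_k$. The only cosmetic difference is that you invoke part~(\ref{part:mu-convex-integral}) where the paper cites part~(\ref{part:mu-concave-integral}); with $f(x)=x$ these coincide, so nothing changes.
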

\begin{proof}
If $\{A_k\}^n_{k=1}, \{B_k\}^n_{k=1} \subseteq \fA$ are such that $\left\|A_k\right\|, \left\|B_k\right\| \leq 1$ for all $k$, $\{t_k\}^n_{k=1} \subseteq [0,1]$ are such that $\sum^n_{k=1} t_k = 1$, and $S' = \sum^n_{k=1} t_k A_k T B_k$, then
\begin{align*}
\int^t_0 \mu^\tau_{S'}(s) \, ds \leq \int^t_0 \sum^n_{k=1} t_k \left\|A_k\right\|\left\|B_k\right\| \mu^\tau_{T}(s) \, ds \leq \int^t_0 \mu^\tau_{T}(s) \, ds 
\end{align*}
by parts (\ref{part:mu-scalar}, \ref{part:mu-contractive},  \ref{part:mu-concave-integral}) of Theorem \ref{thm:properties-of-eigenvalue-functions}.  Thus one inclusion follows from Lemma \ref{lem:pointwise-convergence-of-singular-values}.

For the other direction, suppose $S \prec^w_\tau T$.  Thus $|S| \prec^w_\tau |T|$ so Proposition \ref{prop:weak-majorization-characterization} implies 
\[
|S| \in  \cconv\left(\{A^*|T|A \, \mid \, A \in \fA, \left\|A\right\|\leq 1\}\right) .
\]
The result then follows by approximation arguments along with Lemma \ref{lem:close-absolute-values}.
\end{proof}

\section{Purely Infinite C$^*$-Algebras}
\label{sec:Purely-Infinite}

In this section, we will prove the following result describing the closed convex hulls of unitary orbits of self-adjoint operators $T$ in unital, simple, purely infinite C$^*$-algebras based on the spectrum of $T$, denoted $\sigma(T)$.
\begin{thm}
\label{thm:convex-hull-of-unitary-orbit-in-uspi}
Let $\fA$ be a unital, simple, purely infinite C$^*$-algebra and let $T \in \fA$ be self-adjoint.  Then
\[
\cconv(\U(T)) = \{S \in \fA \, \mid \, S^* = S, \sigma(S) \subseteq \conv(\sigma(T))\}.
\]
\end{thm}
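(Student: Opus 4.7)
The plan is to split the proof into easy and hard directions. For the easy direction, if $S \in \cconv(\U(T))$ then each unitary conjugate $U^*TU$ has spectrum $\sigma(T) \subseteq [\alpha, \beta] := \conv(\sigma(T))$, so $\alpha I_\fA \leq U^*TU \leq \beta I_\fA$; this operator inequality is preserved by convex combinations and by norm limits, yielding $\alpha I_\fA \leq S \leq \beta I_\fA$ and thus $\sigma(S) \subseteq [\alpha, \beta]$. For the hard direction, I use that every unital simple purely infinite C$^*$-algebra has real rank zero (a classical result of Zhang), so both $T$ and $S$ can be approximated in norm by self-adjoints with finite spectrum, and since a small perturbation preserves $\sigma(S) \subseteq [\alpha, \beta]$ up to $\epsilon$, it suffices to treat $T = \sum_j \mu_j P_j$ and $S = \sum_k \lambda_k Q_k$ with $\mu_1 = \alpha$, $\mu_m = \beta$, $\lambda_k \in [\alpha, \beta]$, and pairwise orthogonal non-zero projections $\{P_j\}, \{Q_k\}$ summing to $I_\fA$.

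The technical heart is a strong Dixmier-type lemma: every $\lambda \in [\alpha, \beta]$ satisfies $\lambda I_\fA \in \cconv(\U(T))$. By translation it suffices to prove $0 I_\fA \in \cconv(\U(T))$ when $T \geq 0$ and $0 \in \sigma(T)$ (the upper endpoint follows symmetrically from $\|T\|I_\fA - T$, and intermediate scalars by convexity). Since $0 \leq T \leq \|T\|(I_\fA - P_1)$ where $P_1$ is the kernel projection of $T$ (non-trivial when $T$ is not scalar), applying the same convex combination of unitary conjugates to both sides reduces the problem to the sub-claim: $0 \in \cconv(\U(R))$ for every non-trivial projection $R \in \fA$. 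For the sub-claim I appeal to Cuntz's classification of projections in unital simple purely infinite C$^*$-algebras --- every element of $K_0(\fA)$ is realized by a non-zero projection, and two non-zero projections are Murray-von Neumann equivalent iff they have the same $K_0$-class. For each $n \geq 2$, realize a projection $F \in \fA$ with $[F]_0 = [I_\fA]_0 - n[R]_0$, let $E := I_\fA - F$ (so $[E]_0 = n[R]_0$), and decompose $E$ within the unital simple purely infinite corner $E\fA E$ into pairwise orthogonal non-zero projections $R_1, \ldots, R_n$ each of class $[R]_0$. The identity $[I_\fA - R_i]_0 = [I_\fA]_0 - [R]_0 = [I_\fA - R]_0$ then forces each $R_i$ to be unitarily equivalent to $R$ in $\fA$, so $R_i \in \U(R)$, and $\|\frac{1}{n}\sum_{i=1}^n R_i\| \leq \frac{1}{n} \to 0$ completes the sub-claim.

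With the scalar Dixmier lemma in hand, I construct $S$ from $T$ via block-diagonalization. I produce a partition $\{Q_k'\}_{k=1}^n$ of $I_\fA$ into pairwise orthogonal non-zero projections, unitarily equivalent as an ordered partition to $\{Q_k\}$, such that each compression $Q_k' T Q_k'$ has spectrum (in $Q_k'\fA Q_k'$) containing $\{\alpha, \beta\}$ so that its convex hull contains $\lambda_k$. This is achieved by splitting the extremal spectral projections $P_1 = R_1^{(1)} + \cdots + R_n^{(1)} + \mathrm{rest}_1$ and $P_m = R_1^{(m)} + \cdots + R_n^{(m)} + \mathrm{rest}_m$ into non-zero orthogonal pieces, setting $Q_k' := R_k^{(1)} + R_k^{(m)} + X_k$ where $X_k$ is a projection in the complementary corner of $\fA$ chosen with $K_0$-class $[Q_k]_0 - [R_k^{(1)}]_0 - [R_k^{(m)}]_0$ (Cuntz's theorem guarantees its existence). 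Both partitions $\{Q_k\}$ and $\{Q_k'\}$ then have matching component-wise $K_0$-classes, so they are unitarily equivalent via a single unitary $V \in \fA$ assembled from partial isometries. Averaging $T$ over the $n$-th roots-of-unity unitaries associated to $\{Q_k'\}$ gives $T_{bd} := \sum_k Q_k' T Q_k' \in \conv(\U(T))$. Applying the scalar Dixmier lemma inside each corner $Q_k'\fA Q_k'$ to $Q_k' T Q_k'$ produces $\lambda_k Q_k'$; lifting unitaries from $Q_k'\fA Q_k'$ to $\fA$ that fix the other blocks (as in Section~\ref{sec:Convex-Hulls-Of-Unitary-Orbits}) combines these into $\sum_k \lambda_k Q_k' \in \cconv(\U(T_{bd})) \subseteq \cconv(\U(T))$, and conjugating by $V$ finally yields $S = \sum_k \lambda_k Q_k \in \cconv(\U(T))$.

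The main obstacle lies in the $K$-theoretic construction of the partition $\{Q_k'\}$: I must simultaneously ensure that each $Q_k'$ overlaps non-trivially with both $P_1$ and $P_m$ (for full spectral range of $Q_k' T Q_k'$), that the $Q_k'$ are pairwise orthogonal and sum to $I_\fA$, and that the component-wise Murray-von Neumann classes match $\{Q_k\}$. Cuntz's classification of projections in unital simple purely infinite C$^*$-algebras provides all the necessary flexibility, but the bookkeeping of $K_0$-classes and the choice of non-zero sub-projections of $P_1$ and $P_m$ distributed among the $Q_k'$'s requires delicate care.
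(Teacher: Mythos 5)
Your proposal is correct and takes a genuinely different route from the paper's. The paper first reduces to the case where $T$ has two-point spectrum (Lemma 6.5, via recursive compressions), so that $T$ is essentially a projection $P$, and then constructs the matched partition $\{Q'_k\}$ with $Q'_k = P'_k + P''_k$, $P'_k \leq P$, $P''_k \leq I_\fA - P$, using Cuntz's K-theory classification only for the last piece $Q'_n$; the scalar lemmas (6.4, 6.7) are proved via non-unital embeddings of $\M_{n+1}(\bC)$ and the matricial majorization theorem. You instead keep $T$ with full finite spectrum, build $Q'_k = R_k^{(1)} + R_k^{(m)} + X_k$ carving pieces from both extremal spectral projections $P_1$ and $P_m$ and matching each $[Q'_k]_0 = [Q_k]_0$, block-diagonalize by a single pinching over roots-of-unity unitaries, and then apply a scalar Dixmier lemma in each corner. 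Your scalar lemma itself is more elementary than the paper's: you reduce to the sub-claim that $0 \in \cconv(\U(R))$ for a non-trivial projection $R$ and obtain it by averaging $n$ pairwise orthogonal unitarily equivalent copies, exploiting $\|\frac{1}{n}\sum R_i\| = \frac{1}{n}$ --- no matrix algebra embeddings or Kamei-type theorem needed. Both arguments lean on Cuntz's classification of projections by $K_0$-class in unital simple purely infinite C$^*$-algebras, but yours deploys it more pervasively (for the $R_i$'s in the sub-claim and for the $X_k$'s), whereas the paper confines the K-theory to the single identity $[Q_n]_0 = [Q'_n]_0$.

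Two small technical points to address. First, in the sub-claim you set $E := I_\fA - F$ where $[F]_0 = [I_\fA]_0 - n[R]_0$, and you need $E \neq 0$; choose $F$ properly contained in $I_\fA$ (always possible since $\fA$ is purely infinite simple, so any $K_0$-class is realized by a non-trivial projection). Second, your scalar Dixmier lemma is established for finite-spectrum operators via the kernel projection $P_1$, but the corner compression $Q'_k T Q'_k = \mu_1 R_k^{(1)} + \mu_m R_k^{(m)} + X_k T X_k$ need not have finite spectrum unless $X_k$ commutes with $T$. You can either (i) choose the $X_k$'s inside $\mathrm{rest}_1$ (or as sums of subprojections of the individual $P_j$'s), which is compatible with the $K_0$-bookkeeping and forces $Q'_k T Q'_k$ to have finite spectrum, or (ii) observe that your argument only needs a non-trivial projection in the corner annihilated by $Q'_k T Q'_k - \alpha Q'_k$ (respectively by $\beta Q'_k - Q'_k T Q'_k$), and $R_k^{(1)}$ (respectively $R_k^{(m)}$) serves exactly that role, making the finiteness of the spectrum of $X_k T X_k$ irrelevant.
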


\begin{rem}
\label{rem:reduction-to-finite-spectrum}
Before proceeding, we briefly outline the approach to the proof, beginning with the following simplifications.  
Note the inclusion
\[
\cconv(\U(T)) \subseteq  \{S \in \fA \, \mid \, S^* = S, \sigma(S) \subseteq \conv(\sigma(T))\}
\]
follows from the facts that elements of $\conv(\U(T))$ are self-adjoint when $T$ is self-adjoint, and, if $\alpha I_\fA \leq T \leq \beta I_\fA$, then $\alpha I_\fA \leq S \leq \beta I_\fA$ for all $S \in \conv(\U(T))$.  

Since unital, simple, purely infinite C$^*$-algebras have real rank zero by \cite{Z1990}, to verify the reverse inclusion it suffices to consider self-adjoint $S, T \in \fA$ with finite spectrum and $\sigma(S) \subseteq \conv(\sigma(T))$ by the continuous functional calculus.  Furthermore, note this problem is invariant under simultaneous multiplying the operators by non-zero real numbers and simultaneous translation of the operators by a real constant.  As such, it suffices to prove the result for positive $T$ with $\left\|T\right\| = 1$ and $0, 1 \in \sigma(T)$.

We will demonstrate it suffices to prove the result when $T$ is a projection.  As in Section \ref{sec:Convex-Hulls-Of-Unitary-Orbits}, this will be done by constructing (possibly non-unital) embeddings of arbitrarily larger matrix algebras into $\fA$ and using Theorem \ref{thm:majorization-in-factors}.  Subsequently, we will verify that the result holds for $T$ a projection and $S \in \bC I_\fA$, again appealing to Theorem \ref{thm:majorization-in-factors}.  The result will follow for arbitrary $S$ with finite spectrum by an application of K-Theory.  
\end{rem}

We begin with the following well-known result for purely infinite C$^*$-algebras.
\begin{lem}
\label{lem:direct-sum-of-arbitrary-projection}
Let $\fA$ be a unital, simple, purely infinite C$^*$-algebra and let $P, Q \in \fA$ be orthogonal non-zero projection.  For any $n \in \bN$ there exists a collection $\{P_k\}^n_{k=1}$ of pairwise orthogonal subprojections of $P$ such that each $P_k$ is Murray-von Neumann equivalent to $Q$.
\end{lem}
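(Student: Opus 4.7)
The plan is to exploit two well-known structural properties of unital simple purely infinite C$^*$-algebras, both essentially due to Cuntz.  The first is that every nonzero projection $R \in \fA$ is properly infinite, meaning there exist orthogonal projections $R_1, R_2 \in \fA$ with $R = R_1 + R_2$ and $R_1 \sim R_2 \sim R$.  The second is that for any two nonzero projections $R, S \in \fA$, one has $S \lesssim R$; that is, $S$ is Murray-von Neumann equivalent to a subprojection of $R$.

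First I would apply the properly infinite property of $P$ iteratively to construct $n$ pairwise orthogonal nonzero subprojections $\{R_k\}^n_{k=1}$ of $P$ each Murray-von Neumann equivalent to $P$.  Explicitly, one writes $P = R_1 + P^{(1)}$ with $R_1 \sim P^{(1)} \sim P$, then splits $P^{(1)} = R_2 + P^{(2)}$ with $R_2 \sim P^{(2)} \sim P^{(1)}$, and continues inductively, finally setting $R_n := P^{(n-1)}$.  Note the hypothesis that $Q$ is orthogonal to $P$ plays no direct role in the present statement; it is part of the ambient context of the surrounding results and is not needed in the proof.

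Finally, for each $k \in \{1, \ldots, n\}$, since both $R_k$ and $Q$ are nonzero projections in $\fA$, the second property above produces a subprojection $P_k \leq R_k$ with $P_k \sim Q$.  The pairwise orthogonality of the $R_k$'s immediately forces the $P_k$'s to be pairwise orthogonal, and each $P_k$ is a subprojection of $P$ equivalent to $Q$, which is the required conclusion.  The argument is essentially routine once the two structural properties are on hand; the only step requiring even mild thought is the inductive construction of the $R_k$'s, and even that is standard.  Consequently, there is no substantive obstacle in this proof—it is merely a matter of correctly invoking the defining features of simple purely infinite C$^*$-algebras.
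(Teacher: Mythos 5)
The paper does not actually supply a proof of this lemma---it is stated as ``well-known'' and used as a black box---so there is nothing in the paper to compare against directly.  Your overall strategy is the right one and is the standard route: decompose $P$ into $n$ pairwise orthogonal nonzero subprojections $R_1,\dots,R_n$ using that nonzero projections in a unital, simple, purely infinite C$^*$-algebra are properly infinite, and then use that $Q\lesssim R_k$ for every $k$ to extract the required $P_k\leq R_k$ with $P_k\sim Q$; the orthogonality is inherited.  Your remark that the hypothesis $P\perp Q$ is not actually needed is also correct.

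There is, however, one incorrect intermediate assertion.  You write $P = R_1 + P^{(1)}$ and claim $R_1\sim P^{(1)}\sim P$.  Proper infiniteness gives orthogonal $R_1, R_2\leq P$ with $R_1\sim R_2\sim P$, hence $R_1\sim P$ and $P^{(1)}:=P-R_1\neq 0$ (since $R_2\leq P^{(1)}$), but \emph{not} $P^{(1)}\sim P$.  Indeed $[P^{(1)}]_0=[P]_0-[R_1]_0=0$ in $K_0(\fA)$, whereas by Cuntz's theorem $P^{(1)}\sim P$ would force $[P]_0=0$, which need not hold (e.g.\ $[1]_0\neq 0$ in $\mathcal{O}_\infty$).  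One only gets $P\lesssim P^{(1)}\lesssim P$, and Cantor--Schr\"oder--Bernstein fails for projections here.  Fortunately the slip is harmless: the induction and the final step only require each $P^{(k)}$ (equivalently each $R_k$) to be \emph{nonzero}, which proper infiniteness does deliver.  Replace ``$R_1\sim P^{(1)}\sim P$'' by ``$R_1\sim P$ and $P^{(1)}\neq 0$'' (and similarly at each stage) and the argument is correct and complete.
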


%

By `a non-trivial projection', we mean a non-zero projection $P$ with $P \neq I_\fA$.

\begin{lem}
\label{lem:two-point-spectrum-to-one-point}
Let $\fA$ be a unital, simple, purely infinite C$^*$-algebra and let $P \in \fA$ be a non-trivial projection.  If $\alpha, \beta \in \bR$ and $T = \alpha P + \beta(I_\fA - P)$, then $\alpha I_\fA \in \cconv(\U(T))$.
\end{lem}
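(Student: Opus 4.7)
The plan is to use Lemma \ref{lem:direct-sum-of-arbitrary-projection} to construct, for each $n \in \bN$, a (non-unital) copy of $\M_{n+1}(\bC)$ sitting inside $\fA$ in which (a compression of) $T$ looks like $\diag(\beta, \alpha, \ldots, \alpha)$ with $n$ copies of $\alpha$, then average inside the matrix algebra via Theorem \ref{thm:majorization-in-factors}, and let $n \to \infty$. The case $\alpha = \beta$ is trivial, so assume $\alpha \neq \beta$.

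Fix $n \in \bN$. Applying Lemma \ref{lem:direct-sum-of-arbitrary-projection} to the orthogonal non-zero projections $P$ and $I_\fA - P$ yields pairwise orthogonal subprojections $\{P_k\}_{k=1}^n$ of $P$, each Murray--von Neumann equivalent to $I_\fA - P$. Let $P_0 = P - \sum_{k=1}^n P_k$ and $E = I_\fA - P_0$. Using partial isometries $V_k$ with $V_k^* V_k = I_\fA - P$ and $V_k V_k^* = P_k$, construct matrix units $\{E_{ij}\}_{i,j=0}^n$ with $E_{00} = I_\fA - P$, $E_{kk} = P_k$ for $k \geq 1$, $E_{k0} = V_k$, $E_{0k} = V_k^*$, and $E_{jk} = V_j V_k^*$ for $j,k \geq 1$. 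These generate a unital copy of $\M_{n+1}(\bC)$ sitting inside $E \fA E$ whose unit is $E$, and the compression satisfies
\[
E T E = \beta E_{00} + \alpha \sum_{k=1}^n E_{kk} = \diag(\beta, \alpha, \ldots, \alpha).
\]

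Inside $\M_{n+1}(\bC)$, Example \ref{exam:majorization-self-adjoint-matrices} (or direct inspection) gives $\tfrac{n\alpha + \beta}{n+1} I_{n+1} \prec \diag(\beta, \alpha, \ldots, \alpha)$, and since $\M_{n+1}(\bC)$ is a factor, Theorem \ref{thm:majorization-in-factors}(\ref{part:convex}) yields $\tfrac{n\alpha+\beta}{n+1}E \in \cconv_{\M_{n+1}(\bC)}(\U(ETE))$. Every unitary $U$ of this matrix subalgebra extends to a unitary $\widetilde{U} = U + (I_\fA - E)$ of $\fA$ satisfying $\widetilde{U}^* T \widetilde{U} = U^*(ETE)U + \alpha P_0$, so passing to closed convex hulls gives
\[
R_n := \frac{n\alpha + \beta}{n+1} E + \alpha P_0 \in \cconv(\U(T)).
\]

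Finally, since $P_0$ and $E$ are orthogonal and $E + P_0 = I_\fA$, we have
\[
\left\| R_n - \alpha I_\fA \right\| = \left| \frac{n\alpha+\beta}{n+1} - \alpha \right| \cdot \|E\| = \frac{|\beta - \alpha|}{n+1} \xrightarrow[n\to\infty]{} 0,
\]
and the norm-closedness of $\cconv(\U(T))$ concludes that $\alpha I_\fA \in \cconv(\U(T))$. There is no genuine obstacle here: the construction of matrix units from Murray--von Neumann equivalences is standard, and the averaging inside $\M_{n+1}(\bC)$ is the classical matricial result. The essential use of pure infiniteness is precisely Lemma \ref{lem:direct-sum-of-arbitrary-projection}, which guarantees that $P$ contains arbitrarily many orthogonal copies of $I_\fA - P$, so a single $\beta$-entry gets diluted among arbitrarily many $\alpha$-entries in the averaging process.
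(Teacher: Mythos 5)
Your proof is correct and follows essentially the same route as the paper: apply Lemma \ref{lem:direct-sum-of-arbitrary-projection} to place $n$ orthogonal copies of $I_\fA - P$ beneath $P$, build $\M_{n+1}(\bC)$ with unit $E = (I_\fA - P) + \sum_{k=1}^n P_k$ (the paper's $P'_n$), average $\diag(\beta,\alpha,\ldots,\alpha)$ to its normalized trace via Theorem \ref{thm:majorization-in-factors}, take a direct sum with $\alpha P_0$, and let $n \to \infty$. The only cosmetic difference is that the paper first normalizes to $\alpha = 1$, $\beta = 0$, whereas you carry $\alpha,\beta$ through; your write-up of the matrix units and the norm estimate $\|R_n - \alpha I_\fA\| = |\beta-\alpha|/(n+1)$ is accurate.
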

\begin{proof}
Clearly the result holds if $\alpha = \beta$ so suppose $\alpha \neq \beta$.  Using Remark \ref{rem:reduction-to-finite-spectrum}, by scaling and translating, we may assume that $\alpha = 1$ and $\beta = 0$.

Let $n \in \bN$ be arbitrary.  By Lemma \ref{lem:direct-sum-of-arbitrary-projection} there exists a collection $\{P_k\}^n_{k=1}$ of pairwise orthogonal subprojections of $P$ such that $P_k \sim I_\fA - P$ for all $k$.  Using the partial isometries implementing the equivalence of $\{I_\fA - P\} \cup \{P_k\}^n_{k=1}$, a copy of $\M_{n+1}(\bC)$ may be constructed in $\fA$ such that the unit of $\M_{n+1}(\bC)$ is $P'_n := I_\fA - P + \sum^n_{k=1} P_k$ and $T$ may be viewed as the operator
\[
T = \diag(0, 1, \ldots, 1) \oplus (I_\fA - P'_n) \in \M_{n+1}(\bC) \oplus (I_\fA - P'_n) \fA (I_\fA - P'_n) \subseteq \fA.
\]
Since any self-adjoint matrix majorizes its trace (see Lemma \ref{lem:easy-direction-for-convex-hulls}), we obtain by Theorem \ref{thm:majorization-in-factors} that
\[
\frac{n}{n+1} I_{n+1} \in \cconv(\U(\diag(0, 1, \ldots, 1)))
\]
where the unitary orbit is computed in $\M_{n+1}(\bC)$.  Thus, by a direct sum argument, we obtain
\[
\frac{n}{n+1} P'_n + (I_\fA - P'_n) \in \conv(\U(T)).
\]
By taking the limit as $n \to \infty$, we obtain $I_\fA \in \cconv(\U(T))$.
\end{proof}

\begin{lem}
\label{lem:suffices-to-consider-two-point-spectrum-operators}
Let $\fA$ be a unital, simple, purely infinite C$^*$-algebra and let $\{P_k\}^n_{k=1}$ be a collection of pairwise orthogonal, non-zero projections.  If $T = \sum^n_{k=1} \lambda_k P_k$ for some real numbers $\{\lambda_k\}^n_{k=1} \in \bR$, then 
\[
\lambda_1 \left(\sum^{n-1}_{k=1} P_k\right) + \lambda_n P_n \in \cconv(\U(T)).
\]
\end{lem}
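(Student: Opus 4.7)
The plan is to iterate Lemma \ref{lem:two-point-spectrum-to-one-point}, at each stage absorbing one more spectral projection into the common $\lambda_1$-summand. The cases $n \le 2$ are trivial (the conclusion coincides with $T$ itself), so assume $n \ge 3$.

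For $j = 1, \ldots, n-1$, set $R_j := \sum_{k=1}^{j} P_k$, and for $j = 0, 1, \ldots, n-2$ define the interpolating operators
\[
T_j := \lambda_1 R_{j+1} + \sum_{k=j+2}^{n} \lambda_k P_k,
\]
so that $T_0 = T$ and $T_{n-2} = \lambda_1 R_{n-1} + \lambda_n P_n$ is exactly the target. It suffices to establish $T_j \in \cconv(\U(T_{j-1}))$ for each $j = 1, \ldots, n-2$, since Lemma \ref{lem:convex-hull-transitive}, applied finitely many times, then delivers $T_{n-2} \in \cconv(\U(T))$.

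To pass from $T_{j-1}$ to $T_j$, I would work inside the hereditary corner $R_{j+1}\fA R_{j+1}$. Since $\fA$ is unital, simple, and purely infinite, this corner is again unital, simple, and purely infinite (with unit $R_{j+1}$), by the standard hereditary-subalgebra fact for such algebras. The projection $R_j$ is non-trivial in this corner: it is non-zero because $P_1 \neq 0$, and it differs from the unit $R_{j+1}$ because $P_{j+1} \neq 0$. Applying Lemma \ref{lem:two-point-spectrum-to-one-point} inside $R_{j+1}\fA R_{j+1}$ to the two-point-spectrum self-adjoint operator $\lambda_1 R_j + \lambda_{j+1} P_{j+1}$ yields
\[
\lambda_1 R_{j+1} \in \cconv\bigl(\U(\lambda_1 R_j + \lambda_{j+1} P_{j+1})\bigr),
\]
with closed convex hull and unitary orbit computed in $R_{j+1}\fA R_{j+1}$. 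The direct-sum argument used repeatedly throughout the paper (extend each corner unitary $U$ to $U + (I_\fA - R_{j+1})$ in $\fA$, then pass to convex combinations and norm limits) lifts this containment to $T_j \in \cconv(\U(T_{j-1}))$ in $\fA$.

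The only ingredient beyond Lemma \ref{lem:two-point-spectrum-to-one-point} and the direct-sum lifting is the corner stability of the unital-simple-purely-infinite property. No genuine obstacle arises: the hypothesis that every $P_k$ is non-zero is precisely what guarantees, at each stage, that the absorbing projection $R_j$ is non-trivial in the corner $R_{j+1}\fA R_{j+1}$, which is exactly what Lemma \ref{lem:two-point-spectrum-to-one-point} requires. The proof is therefore a clean indexed iteration of the preceding lemma, tied together by Lemma \ref{lem:convex-hull-transitive}.
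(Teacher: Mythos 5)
Your proof is correct and takes the same approach as the paper, which simply states the result follows by applying Lemma \ref{lem:two-point-spectrum-to-one-point} recursively on compressions of $\fA$; you have filled in precisely the iterative scheme and corner arguments the paper leaves implicit.
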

\begin{proof}
The result follows by using Lemma \ref{lem:two-point-spectrum-to-one-point} recursively on compressions of $\fA$ (which remain unital, simple, purely infinite C$^*$-algebras).
\end{proof}

\begin{lem}
\label{lem:getting-three-point-spectra-from-projection}
Let $\fA$ be a unital, simple, purely infinite C$^*$-algebra and let $P \in \fA$ be a non-trivial projection.  For each $\gamma \in [0,1] \cap \bQ$, there exists pairwise orthogonal, non-zero projections $Q_1, Q_2, Q_3$ such that $Q_1 + Q_2 + Q_3 = I_\fA$ and
\[
0 Q_1 + \gamma Q_2 + 1 Q_3 \in \cconv(\U(P)).
\]
\end{lem}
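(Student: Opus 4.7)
The plan is: given $\gamma = p/q$ in lowest terms, build a unital copy of $\M_{q+1}(\bC)$ inside a compression of $\fA$ so that $P$ appears as a specific diagonal matrix, then invoke matricial majorization together with a direct-sum argument to produce an element of $\cconv(\U(P))$ with the desired spectrum.

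I would dispatch the boundary cases $\gamma \in \{0,1\}$ immediately: for $\gamma = 0$ take $Q_3 := P$ and split $I_\fA - P$ into two non-zero subprojections (possible since $I_\fA - P$ is properly infinite in a unital simple purely infinite C$^*$-algebra); for $\gamma = 1$ take $Q_1 := I_\fA - P$ and split $P$ similarly. So assume $0 < p < q$.

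Using that both $P$ and $I_\fA - P$ are properly infinite in $\fA$, I would first split $P = P^{(1)} + \cdots + P^{(p+1)}$ into pairwise orthogonal projections each Murray--von Neumann equivalent to $P$, and then invoke Lemma \ref{lem:direct-sum-of-arbitrary-projection} (with the roles of $P$ and $Q$ swapped) to produce $q - p + 1$ pairwise orthogonal subprojections of $I_\fA - P$ each equivalent to $P$. Keeping only the first $q-p$ of them as $E_1, \ldots, E_{q-p}$ ensures the remainder $Q_0 := I_\fA - P - \sum_j E_j$ is non-zero, since it contains the $(q-p+1)^{\mathrm{st}}$ such subprojection. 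The collection $\{P^{(1)}, \ldots, P^{(p+1)}, E_1, \ldots, E_{q-p}\}$ then consists of $q+1$ pairwise orthogonal and pairwise equivalent projections summing to $I_\fA - Q_0$; partial isometries implementing the equivalences yield a unital embedding $\M_{q+1}(\bC) \hookrightarrow (I_\fA - Q_0) \fA (I_\fA - Q_0)$ for which $P$ takes the form $P = 0 \cdot Q_0 + D$ with
\[
D = \diag(\underbrace{1, \ldots, 1}_{p+1}, \underbrace{0, \ldots, 0}_{q-p}) \in \M_{q+1}(\bC).
\]

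The matricial step is to show that $D' := \diag(1, \gamma, \gamma, \ldots, \gamma) \in \M_{q+1}(\bC)$ is majorized by $D$ with respect to the normalized trace on $\M_{q+1}(\bC)$: both have trace $(p+1)/(q+1)$, and comparing cumulative sums in decreasing order reduces to $\gamma \leq 1$ when $k \leq p+1$ and $(k-1)p/q \leq p$ when $k > p+1$, both obvious, with equality at $k = q+1$. Theorem \ref{thm:majorization-in-factors} then places $D'$ in $\cconv(\U_{\M_{q+1}}(D))$. Extending unitaries of $\M_{q+1}(\bC)$ to unitaries of $\fA$ by letting them act as the identity on $Q_0$ promotes this to $S := 0 \cdot Q_0 + D' \in \cconv(\U_\fA(P))$. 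Setting $Q_1 := Q_0$, $Q_3 := P^{(1)}$, and $Q_2 := (I_\fA - Q_0) - P^{(1)}$ gives $S = 0 \cdot Q_1 + \gamma \cdot Q_2 + 1 \cdot Q_3$, with all three summands non-zero ($Q_1$ by construction, $Q_3$ as a piece of the splitting of $P$, and $Q_2$ as a sum of $q \geq 1$ non-zero orthogonal projections). The main obstacle is engineering the spectral multiplicities so that the arithmetic $1 + q\gamma = p + 1$ yields non-negative integer multiplicities for eigenvalues $\{0, \gamma, 1\}$; because the multiplicity of $0$ inside the matrix part is forced to be zero, the zero eigenvalue must be supplied by the remainder corner $Q_0$, which is precisely why one uses $p+1$ splittings of $P$ together with exactly $q-p$ subprojections of $I_\fA - P$.
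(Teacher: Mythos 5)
There is a genuine gap at the very first step, where you claim to split $P = P^{(1)} + \cdots + P^{(p+1)}$ into pairwise orthogonal projections each Murray--von Neumann equivalent to $P$. If such a decomposition existed, passing to $K_0$ would give $[P]_0 = (p+1)[P]_0$, i.e.\ $p[P]_0 = 0$ in $K_0(\fA)$, and this fails in general. For instance, in $\O_\infty$ one has $K_0(\O_\infty) \cong \mathbb{Z}$, and any non-trivial projection $P$ with $[P]_0 = 1$ (which exists, since compressions of a unital simple purely infinite C$^*$-algebra realize every $K_0$ class) admits no such decomposition for any $p \geq 1$. Relaxing to ``the $q+1$ corners are merely pairwise equivalent'' does not help either: that forces $[P]_0$ to be divisible by $p+1$ in $K_0(\fA)$, which again fails in $\O_\infty$. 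Proper infiniteness of $P$ gives you two orthogonal subprojections of $P$ each equivalent to $P$, but their sum is in general a \emph{proper} subprojection of $P$; the leftover piece is exactly what your construction cannot absorb.

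The paper's proof avoids this by never attempting to partition $P$. It fixes an auxiliary non-trivial projection $Q$ and uses Lemma~\ref{lem:direct-sum-of-arbitrary-projection} to produce $k+1$ pairwise orthogonal subprojections of $P$ and $n-k+1$ of $I_\fA - P$, each equivalent to $Q$, then deliberately retains one fewer on each side so that non-zero remainders $Q_3 \leq P$ and $Q_1 \leq I_\fA - P$ survive. The matrix corner has unit $Q_2$ sitting strictly in between, $P$ compressed to $Q_2$ is $\diag(1^k, 0^{n-k})$, and the only matricial fact needed is that this matrix majorizes the scalar $\gamma I_n$. Your version tries to engineer the entire $1$-eigenspace of the target inside the matrix corner, which forces the corner to swallow all of $P$, and that is exactly what $K_0$ obstructs. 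If you instead took only $p+1$ subprojections of $P$ (relative to a common reference), leaving a non-zero remainder inside $P$, your pinching of $\diag(1^{p+1}, 0^{q-p})$ to $\diag(1, \gamma, \ldots, \gamma)$ would still be valid, but the $1$-eigenspace would then receive a contribution from outside the matrix corner and the construction collapses to the paper's.
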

\begin{proof}
Note the cases $\gamma = 0, 1$ are trivial.  Otherwise, fix $n \in \bN$ and choose $k \in \{1,\ldots, n-1\}$ so that $\gamma = \frac{k}{n}$.  Let $Q \in \fA$ be any non-trivial projection.  By Lemma \ref{lem:direct-sum-of-arbitrary-projection} there exists a collection $\{P_j\}^{k+1}_{j=1}$ of pairwise orthogonal subprojections of $P$ such that $P_j \sim Q$ for all $j$. Similarly there exists a collection $\{P'_j\}^{n-k+1}_{j=1}$ of pairwise orthogonal subprojections of $I_\fA - P$ such that $P'_j \sim Q$ for all $j$. 

Let
\begin{align*}
Q_1 = (I_\fA - P) - \sum^{n-k}_{j=1} P'_j,\quad
Q_2 = \sum^{k}_{j=1} P_j + \sum^{n-k}_{j=1} P'_j, \qand
Q_3 = P - \sum^{k}_{j=1} P_j.
\end{align*}
Since $P_{k+1} \leq Q_3$ and $P_{n-k+1} \leq Q_1$, it is clear that $Q_1$, $Q_2$, and $Q_3$ are pairwise orthogonal, non-zero projections such that $Q_1 + Q_2 + Q_3 = I_\fA$.  Using the partial isometries implementing the equivalence of $\{P_j\}^{k}_{j=1} \cup \{P'_j\}^{n-k}_{j=1}$, a copy of $\M_{n}(\bC)$ can be constructed in $\fA$ such that the unit of $\M_{n}(\bC)$ is $Q_2$ and
\[
P = 0Q_1 \oplus D \oplus 1Q_3 \in  Q_1 \fA Q_1 \oplus \M_{n}(\bC) \oplus Q_3 \fA Q_3 \subseteq \fA
\]
where $D$ is a diagonal matrix with $1$ appearing along the diagonal exactly $k$ times and $0$ appearing along the diagonal exactly $n-k$ times.  
Since any self-adjoint matrix majorizes its trace (see Lemma \ref{lem:easy-direction-for-convex-hulls}), we obtain by Theorem \ref{thm:majorization-in-factors} and a direct sum argument that
\[
0Q_1 + \gamma Q_2 + 1 Q_3 \in \conv(\U(P)). \qedhere
\]
\end{proof}

\begin{lem}
\label{lem:any-scalar-for-a-projection}
Let $\fA$ be a unital, simple, purely infinite C$^*$-algebra and let $P \in \fA$ be a non-trivial projection.  For each $\gamma \in [0,1]$, $\gamma I_\fA \in \cconv(\U(P))$.
\end{lem}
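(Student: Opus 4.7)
\textbf{Proof proposal for Lemma \ref{lem:any-scalar-for-a-projection}.} The plan is to observe that Lemma \ref{lem:two-point-spectrum-to-one-point} applied to $T = P$ yields \emph{both} endpoints $0$ and $I_\fA$ of the spectrum in $\cconv(\U(P))$, and then use convexity to sweep out the entire interval.

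More precisely, first I would write $P = 1 \cdot P + 0 \cdot (I_\fA - P)$ and apply Lemma \ref{lem:two-point-spectrum-to-one-point} using the non-trivial projection $P$ itself; this directly produces $I_\fA = 1 \cdot I_\fA \in \cconv(\U(P))$. Then I would apply the same lemma a second time, but to the alternate decomposition $P = 0 \cdot (I_\fA - P) + 1 \cdot \bigl(I_\fA - (I_\fA - P)\bigr)$, using the non-trivial projection $I_\fA - P$ (which is non-trivial precisely because $P$ is); this produces $0 = 0 \cdot I_\fA \in \cconv(\U(P))$. Note that the non-triviality of both $P$ and $I_\fA - P$ is exactly the hypothesis that $P$ is a non-trivial projection.

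Finally, since $\cconv(\U(P))$ is convex and contains both $0$ and $I_\fA$, for any $\gamma \in [0,1]$ we have
\[
\gamma I_\fA \;=\; \gamma \cdot I_\fA + (1-\gamma) \cdot 0 \;\in\; \cconv(\U(P)),
\]
which is the desired conclusion. There is no real obstacle here: the two directions of Lemma \ref{lem:two-point-spectrum-to-one-point} (obtained simply by switching which projection plays the role of $P$) combined with closed convexity are all that is needed, and the argument requires no appeal to Lemma \ref{lem:getting-three-point-spectra-from-projection} or any rational approximation; in particular it works uniformly for $\gamma \in [0,1]$, including irrational $\gamma$.
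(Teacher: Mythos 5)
Your proof is correct, and it takes a genuinely shorter route than the paper's. The paper first proves Lemma \ref{lem:getting-three-point-spectra-from-projection}: for rational $\gamma = k/n$ it builds an explicit copy of $\M_n(\bC)$ inside $\fA$, places $P$ into the form $0\,Q_1 + \gamma\,Q_2 + 1\,Q_3 \in \cconv(\U(P))$ via averaging in that matrix algebra, then applies Lemma \ref{lem:two-point-spectrum-to-one-point} twice in corner algebras to collapse the three-point spectrum down to $\gamma I_\fA$, and finally passes to irrational $\gamma$ by density. Your argument sidesteps all of this: Lemma \ref{lem:two-point-spectrum-to-one-point} is symmetric under swapping $P$ with $I_\fA - P$ (both are non-trivial precisely because $P$ is), so writing $P = 1\cdot P + 0\cdot(I_\fA - P)$ gives $I_\fA \in \cconv(\U(P))$, while writing $P = 0\cdot(I_\fA - P) + 1\cdot(I_\fA - (I_\fA - P))$ gives $0 \in \cconv(\U(P))$; since $\cconv(\U(P))$ is closed and convex, the whole segment $\{\gamma I_\fA : \gamma \in [0,1]\}$ is contained in it. This handles all $\gamma$ at once with no rational approximation, and it makes Lemma \ref{lem:getting-three-point-spectra-from-projection} unnecessary for the paper (it is not cited anywhere else). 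The only thing the paper's longer route buys is an explicit intermediate element $0\,Q_1 + \gamma\,Q_2 + 1\,Q_3$ of $\cconv(\U(P))$, which is of some independent interest but is not needed for the statement of the lemma.
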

\begin{proof}
By applying approximations, it suffices to prove the result for $\gamma \in (0, 1) \cap \bQ$.  By Lemma \ref{lem:getting-three-point-spectra-from-projection} there exists pairwise orthogonal, non-zero projections $Q_1, Q_2, Q_3$ such that $Q_1 + Q_2 + Q_3 = I_\fA$ and
\[
0 Q_1 + \gamma Q_2 + 1 Q_3 \in \cconv(\U(P)).
\]

Choose two non-zero subprojections $Q'_1$ and $Q'_3$ of $Q_2$ such that $Q'_1 + Q'_3 = Q_2$.  By applying Lemma \ref{lem:two-point-spectrum-to-one-point} to $0 Q_1 + \gamma Q'_1 \in (Q_1 + Q'_1) \fA(Q_1 + Q'_1)$, we obtain that 
\[
\gamma (Q_1 + Q'_1) \in  \cconv(\U(0 Q_1 + \gamma Q'_1))
\]
(where the quantity on the right-hand side is computed in $ (Q_1 + Q'_1) \fA(Q_1 + Q'_1)$).  Similarly
\[
\gamma (Q_3 + Q'_3) \in  \cconv(\U(1 Q_3 + \gamma Q'_3))
\]
Hence, by the fact that $0 Q_1 + \gamma Q_2 + 1 Q_3$ is a direct sum of $0 Q_1 + \gamma Q'_1$ and $1 Q_3 + \gamma Q'_3$, we obtain that
\[
\gamma I_\fA = \gamma (Q_1 + Q'_1) + \gamma (Q_3 + Q'_3) \in \cconv(\U(P)). \qedhere
\]
\end{proof}

\begin{proof}[Proof of Theorem \ref{thm:convex-hull-of-unitary-orbit-in-uspi}]
By Remark \ref{rem:reduction-to-finite-spectrum}, we may assume $\sigma(S)$ and $\sigma(T)$ are finite so that there exists $\{\lambda_j\}^m_{j=1}, \{\alpha_k\}^n_{k=1} \subseteq \bR$ with $\lambda_k < \lambda_{k+1}$ for all $k$ and $\alpha_k \in \conv(\{\lambda_j\}^m_{j=1})$ for all $k$, and two collections of pairwise orthogonal non-zero projections $\{P_j\}^m_{j=1}$ and $\{Q_k\}^n_{k=1}$ with $\sum^m_{j=1} P_j = I_\fA = \sum^n_{k=1} Q_k$ such that
\[
T = \sum^m_{j=1} \lambda_j P_j \qqand S = \sum^n_{k=1} \alpha_k Q_k.
\]

The result is trivial if $m = 1$ so we assume $m \geq 2$.  Furthermore, by translation and scaling, it suffices to prove the result when $\lambda_1 = 0$ and $\lambda_m = 1$.  Furthermore, by Lemma \ref{lem:suffices-to-consider-two-point-spectrum-operators}, we may assume that $m = 2$.  For simplicity, let $P = P_m$ so $P_1 = I_\fA - P$ and $T = P$.

Since $\fA$ is a unital, simple, purely infinite C$^*$-algebra, there exists a collection $\{P'_k\}^{n-1}_{k=1}$ of non-zero subprojections of $P$ and  a collection $\{P''_k\}^{n-1}_{k=1}$ of non-zero subprojections of $I_\fA - P$ such that $P'_k + P''_k \sim Q_k$, $P'_n = P - \sum^{n-1}_{k=1} P'_k$ is non-zero, and $P''_n = \sum^{n-1}_{k=1} P''_k$ is non-zero.  For each $k \in \{1,\ldots, n\}$, let $Q'_k = P'_k + P''_k$.  Therefore
\[
\sum^n_{k=1} [Q_k]_0 = [I_\fA]_0 = \sum^n_{k=1} [Q'_k]_0 = [Q'_n]_0 + \sum^{n-1}_{k=1} [Q_k]_0.
\]
Hence $[Q_n]_0 = [Q'_n]_0$ so $Q_n \sim Q'_n$ by \cite{C1978}*{Theorem 1.4}.  

Notice
\[
T = \oplus^n_{k=1} (1 P'_k + 0 P''_k) \in \bigoplus^n_{k=1} Q'_k \fA Q'_k.
\]
Since $P'_k$ and $P''_k$ are non-zero for each $k$ and since $Q'_k \fA Q'_k$ is a unital, simple, purely infinite C$^*$-algebra, by applying Lemma \ref{lem:any-scalar-for-a-projection} in each $Q'_k \fA Q'_k$ and by taking a direct sum, we obtain
\[
\sum^n_{k=1} \alpha_k Q'_k \in \cconv(\U(T)).
\]
Since $\sum^n_{k=1} \alpha_k Q'_k$ is unitarily equivalent to $S$ by the fact that $Q_k \sim Q'_k$ for all $k$, we obtain that $S \in \cconv(\U(T))$.
\end{proof}

We note the following adaptation of \cite{HN1991}*{Theorem 4.2}.
\begin{cor}
Let $\fA$ be a unital, simple, purely infinite C$^*$-algebra.  If $S, T \in \fA$ are self-adjoint, then
\[
\dist(S, \conv(\U(T))) = \sup_{x \in \sigma(S)} \dist(x, \conv(\sigma(T))).
\]
\end{cor}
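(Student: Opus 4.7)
The plan is to combine Theorem \ref{thm:convex-hull-of-unitary-orbit-in-uspi} with a standard spectral calculus argument. Write $\alpha = \min \sigma(T)$ and $\beta = \max \sigma(T)$, so that (since $T$ is self-adjoint) $\conv(\sigma(T)) = [\alpha, \beta]$. By Theorem \ref{thm:convex-hull-of-unitary-orbit-in-uspi},
\[
\cconv(\U(T)) = \{R \in \fA \, \mid \, R^* = R, \, \alpha I_\fA \leq R \leq \beta I_\fA\},
\]
and since distance to a set equals distance to its closure, $\dist(S, \conv(\U(T))) = \dist(S, \cconv(\U(T)))$. Let $d := \sup_{x \in \sigma(S)} \dist(x, [\alpha, \beta])$.

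For the upper bound, I would use the continuous functional calculus. Let $f : \bR \to [\alpha, \beta]$ be the continuous retraction $f(x) = \max\{\alpha, \min\{x, \beta\}\}$, and set $R := f(S)$. Then $R$ is self-adjoint with $\sigma(R) = f(\sigma(S)) \subseteq [\alpha, \beta] = \conv(\sigma(T))$, so $R \in \cconv(\U(T))$ by Theorem \ref{thm:convex-hull-of-unitary-orbit-in-uspi}. Moreover, $\|S - R\| = \sup_{x \in \sigma(S)} |x - f(x)| = d$, giving $\dist(S, \cconv(\U(T))) \leq d$.

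For the lower bound, fix any $R \in \cconv(\U(T))$ and any $x \in \sigma(S)$. I would pass to a faithful representation $\pi : \fA \to \B(\H)$ (which exists since $\fA$ is a C$^*$-algebra) and use that $x \in \sigma(\pi(S))$, so there exist approximate eigenvectors: for every $\delta > 0$ a unit vector $\xi \in \H$ with $\|(\pi(S) - xI)\xi\| < \delta$. Since $\alpha I_\fA \leq R \leq \beta I_\fA$, if $x > \beta$ then $\pi(R) - xI \leq (\beta - x)I < 0$, so $\|(\pi(R) - xI)\xi\| \geq x - \beta$; similarly $\|(\pi(R) - xI)\xi\| \geq \alpha - x$ when $x < \alpha$. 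In either case, by the triangle inequality,
\[
\|S - R\| \geq \|(\pi(S) - \pi(R))\xi\| \geq \|(\pi(R) - xI)\xi\| - \delta \geq \dist(x, [\alpha, \beta]) - \delta.
\]
Letting $\delta \to 0$ and taking the supremum over $x \in \sigma(S)$ yields $\|S - R\| \geq d$, and since $R \in \cconv(\U(T))$ was arbitrary, $\dist(S, \cconv(\U(T))) \geq d$.

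There is no real obstacle here: once Theorem \ref{thm:convex-hull-of-unitary-orbit-in-uspi} identifies $\cconv(\U(T))$ as a spectral order-interval, both bounds follow from elementary functional-calculus and approximate-eigenvector arguments. The only mild subtlety is the lower bound when $x$ is an interior spectral point of $S$; this is handled uniformly by the approximate eigenvector method in any faithful representation (which is where unitarily of $\fA$ plays no role, so the argument is purely spectral).
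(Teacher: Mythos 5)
Your proof is correct, and the overall strategy matches the paper's: identify $\cconv(\U(T))$ via Theorem \ref{thm:convex-hull-of-unitary-orbit-in-uspi}, get the upper bound by applying the retraction $f$ onto $\conv(\sigma(T))$ to $S$, and get the lower bound by testing against vectors in a faithful representation. The only variation is in the lower bound: the paper invokes the characterization $\conv(\sigma(A)) = \overline{\{\langle A\eta,\eta\rangle : \|\eta\|=1\}}$ (Halmos, Problem 171) and estimates $|\langle\pi(T'-S)\eta,\eta\rangle|$ for $T' \in \conv(\U(T))$, then passes to the supremum over unit vectors (which suffices because the supremum of $\dist(\,\cdot\,,\conv(\sigma(T)))$ over the interval $\conv(\sigma(S))$ is attained at an endpoint, which lies in $\sigma(S)$). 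You instead work directly with approximate eigenvectors for each $x \in \sigma(S)$ and the operator inequality $\alpha I \leq \pi(R) \leq \beta I$, which is slightly more elementary (it avoids the numerical-range fact entirely) and reads a bit more transparently, since it bounds $\|S - R\|$ against $\sigma(S)$ directly rather than against $\conv(\sigma(S))$. Both arguments are sound; the difference is a matter of which elementary Hilbert-space estimate one prefers.
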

\begin{proof}
First, suppose $T' \in \conv(\U(T))$.  Let $\pi : \fA \to \B(\H)$ be a faithful representation of $\fA$ (whose existence is guaranteed by the GNS construction). By \cite{H1967}*{Problem 171}, for every self-adjoint operator $A \in \B(\H)$,
\[
\conv(\sigma(A)) = \overline{\{\langle A \eta, \eta \rangle \, \mid \, \eta \in \H, \left\|\eta\right\| = 1\}   }.
\]
Let $\eta \in \H$ be such that $\left\|\eta\right\| =1$.  Since
\[
\left\|T' - S\right\| \geq |\langle \pi(T' -S)\eta, \eta\rangle| \geq \dist(\langle \pi(S)\eta, \eta\rangle, \conv(\sigma(T))),
\]
we obtain that
\[
\dist(S, \conv(\U(T))) \geq \sup_{x \in \sigma(S)} \dist(x, \conv(\sigma(T))).
\]

For the reverse inclusion, defined a continuous function $f : \bR \to \bR$ so that $f(x) \in \conv(\sigma(T))$ for all $x$ and
\[
|x - f(x)| = \dist(x, \conv(\sigma(T)))
\]
for all $x \in \bR$.  Let $T' = f(S)$.  Therefore,  by the continuous functional calculus,  $\sigma(T') = f(\sigma(S)) \subseteq \conv(\sigma(T))$.  Hence $T' \in \cconv(\sigma(T))$ by Theorem \ref{thm:convex-hull-of-unitary-orbit-in-uspi}.  Since
\[
\left\|S - T'\right\| = \sup_{x \in \sigma(S)} \left\|x - f(x)\right\| = \sup_{x \in \sigma(S)} \dist(x, \conv(\sigma(T))),
\]
the reverse inclusion holds.
\end{proof}

To conclude this paper, we note the proof of Theorem \ref{thm:convex-hull-of-unitary-orbit-in-uspi} can be improved to normal operators provided $K_1(\fA)$ is trivial or, more generally by \cite{C1981}, for normal operators $N$ such that $\lambda I_\fA - N$ is an element of the connected component containing $I_\fA$ in the set of invertible elements of $\fA$, denoted $\fA^{-1}_0$, for all $\lambda \notin \sigma(N)$.  This is a generalization of \cite{HN1991}*{Theorem 4.1} and we only sketch the modifications to the proof.

\begin{thm}
\label{thm:convex-hull-of-unitary-orbit-in-uspi-normal}
Let $\fA$ be a unital, simple, purely infinite C$^*$-algebra and let $N_1, N_2 \in \fA$ be normal operators with $\lambda I_\fA - N_k \in \fA^{-1}_0$ for all $\lambda \notin \sigma(N_k)$ and for all $k$.  Then $N_2 \in \cconv(\U(N_1))$ if and only if $\sigma(N_2) \subseteq \conv(\sigma(N_1))$.
\end{thm}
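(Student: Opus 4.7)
The plan is to mirror the proof of Theorem \ref{thm:convex-hull-of-unitary-orbit-in-uspi}, with Cuntz's classification of normal operators \cite{C1981} replacing the $K_0$-based projection matching of \cite{C1978} used there. For the forward direction, given $N_2 \in \cconv(\U(N_1))$, I would run a half-plane separation argument: fix any closed half-plane $H = \{z \in \bC \, \mid \, \Re(\bar{\alpha} z) \leq \beta\}$ containing $\sigma(N_1)$. Then the self-adjoint operator $\Re(\bar{\alpha} N_1) = \frac{1}{2}(\bar\alpha N_1 + \alpha N_1^*)$ satisfies $\Re(\bar{\alpha} N_1) \leq \beta I_\fA$ by the continuous functional calculus, and this inequality is preserved by convex combinations of unitary conjugates and by norm limits. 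Thus $\Re(\bar{\alpha} N_2) \leq \beta I_\fA$, and since $N_2$ is normal this places $\sigma(N_2) \subseteq H$. Intersecting over all closed half-planes containing $\sigma(N_1)$ yields $\sigma(N_2) \subseteq \conv(\sigma(N_1))$.

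\textbf{Reverse direction: reductions.} For the reverse inclusion, assume $\sigma(N_2) \subseteq \conv(\sigma(N_1))$. Following Remark \ref{rem:reduction-to-finite-spectrum}, I would first reduce to the case where both $N_1$ and $N_2$ have finite spectrum within the class of normals satisfying the $\fA_0^{-1}$ hypothesis; whereas in the self-adjoint setting this uses only that $\fA$ has real rank zero (\cite{Z1990}), the normal analog of this density statement requires Cuntz's approximation results of \cite{C1981}. With finite spectra $N_1 = \sum_{j=1}^m \lambda_j P_j$ and $N_2 = \sum_{k=1}^n \alpha_k Q_k$ in hand, iterated pinchings --- a normal analog of Lemmas \ref{lem:two-point-spectrum-to-one-point} and \ref{lem:suffices-to-consider-two-point-spectrum-operators}, obtained by applying Lemma \ref{lem:any-scalar-for-a-projection} after translation and rescaling to two-point compressions $\mu P' + \nu (Q'-P')$ in compressed algebras $Q' \fA Q'$ --- should further reduce to the case where $\sigma(N_1)$ equals the extreme point set of $\conv(\sigma(N_1))$. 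Writing each $\alpha_k = \sum_{j=1}^m t_{k,j} \lambda_j$ as a convex combination, I would partition each $Q_k$ via Lemma \ref{lem:direct-sum-of-arbitrary-projection} into non-zero subprojections $\{R_{k,j}\}_j$ whose $K_0$-classes realize the appropriate proportions of $[P_j]_0$, and apply Lemma \ref{lem:any-scalar-for-a-projection} inside each $Q_k \fA Q_k$ (after translation and rescaling for the complex scalars $\lambda_j$) to average $\sum_j \lambda_j R_{k,j}$ down to $\alpha_k Q_k$. Taking a direct sum over $k$ would produce $N_2 \in \cconv(\U(N_1'))$, where $N_1' = \sum_{j,k} \lambda_j R_{k,j}$.

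\textbf{Main obstacle: final K-theoretic matching.} The crux of the argument is showing that the finite-spectrum normal $N_1' = \sum_{j,k} \lambda_j R_{k,j}$ is approximately unitarily equivalent to $N_1 = \sum_j \lambda_j P_j$, so that the previous step places $N_2$ inside $\cconv(\U(N_1))$. In the self-adjoint proof the analogous step reduces to the equality of Murray-von Neumann classes $\bigl[\sum_k R_{k,j}\bigr]_0 = [P_j]_0$ for each $j$, which is secured by \cite{C1978}*{Theorem 1.4} from equality in $K_0$. For normal operators, however, approximate unitary equivalence requires not only matching of the $K_0$-classes of the spectral projections but also the vanishing of a $K_1$-valued index obstruction on the resolvents; this is precisely the content of \cite{C1981}. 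The hypothesis $\lambda I_\fA - N_k \in \fA_0^{-1}$ for all $\lambda \notin \sigma(N_k)$ and both $k = 1,2$ is exactly what is needed to trivialize this obstruction for both $N_1'$ and $N_1$, so that \cite{C1981} delivers the required approximate unitary equivalence and the proof is completed by passing to norm limits as in Theorem \ref{thm:convex-hull-of-unitary-orbit-in-uspi}.
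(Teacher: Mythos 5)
Your forward direction is correct but takes a different route from the paper: you use a separating-half-plane argument on the self-adjoint operators $\Re(\bar\alpha N_1)$, whereas the paper applies Halmos' characterization $\conv(\sigma(A)) = \overline{\{\langle A\eta,\eta\rangle\}}$ to a faithful representation. Both are valid; yours is perhaps more elementary. Your reverse direction also differs in organization: you keep $N_2 = \sum_k \alpha_k Q_k$ general and partition each $Q_k$ into pieces $R_{k,j}$, building an auxiliary $N_1'$ to be matched to $N_1$, whereas the paper first reduces $N_2$ to a scalar $\gamma I_\fA$ via direct-sum arguments (the K-theoretic matching happening there), then uses Carath\'{e}odory to cut $\sigma(N_1)$ down to three points and applies Lemma \ref{lem:any-scalar-for-a-projection} twice. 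Either ordering works.

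However, your ``main obstacle'' paragraph contains a genuine misattribution. After you have reduced to finite spectra, the operators $N_1 = \sum_j \lambda_j P_j$ and $N_1' = \sum_j \lambda_j\bigl(\sum_k R_{k,j}\bigr)$ are two normals \emph{with identical finite spectrum}. Approximate (in fact exact) unitary equivalence of two such operators in a unital simple purely infinite C$^*$-algebra is equivalent to Murray-von Neumann equivalence of the corresponding eigenprojections, which by \cite{C1978}*{Theorem 1.4} is a pure $K_0$ condition — exactly as in the self-adjoint case, Theorem \ref{thm:convex-hull-of-unitary-orbit-in-uspi}. There is no residual $K_1$ obstruction: for any finite-spectrum normal $N$, the resolvent $\lambda I_\fA - N$ is \emph{automatically} in $\fA^{-1}_0$ for all $\lambda \notin \sigma(N)$ (its unitary part $\sum_j e^{i\theta_j} P_j$ is connected to $I_\fA$ through $\sum_j e^{is\theta_j}P_j$). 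So \cite{C1981} is not what delivers the final matching, and invoking it there would be vacuous. The sole place where the hypothesis $\lambda I_\fA - N_k \in \fA^{-1}_0$ is actually used is the one you correctly identified earlier: the reduction to finite spectrum, which the paper carries out by citing Lin's approximation theorem \cite{L1996}. You should delete the claim that this hypothesis is needed to ``trivialize'' a $K_1$ obstruction in the last matching step and replace it with the observation that the matching is a $K_0$ matter handled exactly as in Theorem \ref{thm:convex-hull-of-unitary-orbit-in-uspi}.

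One smaller remark: the intermediate reduction to ``$\sigma(N_1)$ equals the extreme point set'' is harmless but unnecessary; since $\alpha_k \in \conv(\sigma(N_1))$, one can always write $\alpha_k$ as a convex combination of the $\lambda_j$ whether or not they are all extreme, and the iterated two-point averaging inside each $Q_k \fA Q_k$ (which is where the real content of Lemmas \ref{lem:two-point-spectrum-to-one-point}--\ref{lem:any-scalar-for-a-projection} lives, and which your proposal glosses over) proceeds regardless.
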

\begin{proof}
Suppose $N_2 \in \cconv(\U(N_1))$.  Let $(M_n)_{n\geq 1} \subseteq \conv(\U(N_1))$ be such that $N_2 = \lim_{n \to \infty} M_n$ and let $\pi : \fA \to \B(\H)$ be a faithful representation of $\fA$. By \cite{H1967}*{Problem 171}, for every normal operator $A \in \B(\H)$,
\[
\conv(\sigma(A)) = \overline{\{\langle A \eta, \eta \rangle \, \mid \, \eta \in \H, \left\|\eta\right\| = 1\}   }.
\]
Since $M_n \in \conv(\U(N_1))$, we obtain $\langle\pi(M_n) \eta, \eta \rangle \in \conv(\sigma(N_1))$ for all $\eta \in \H$ with $\left\|\eta \right\| =1$.  Therefore, since $\langle \pi(N_2) \eta, \eta \rangle = \lim_{n\to \infty} \langle \pi(M_n)\eta, \eta\rangle$, we obtain $\sigma(N_2) \subseteq \conv(\sigma(N_1))$.

For the converse direction, note by \cites{L1996} that $N_1$ and $N_2$ can be approximated by normal operators with finite spectra.  Thus, by an application of the continuous functional calculus, it suffices to prove that if $\sigma(N_2)$ and $\sigma(N_1)$ are finite and $\sigma(N_2) \subseteq \conv(\sigma(N_1))$, then $N_2 \in \cconv(\U(N_1))$.  Furthermore, by using similar direct sum arguments as in the proof of Theorem \ref{thm:convex-hull-of-unitary-orbit-in-uspi}, it suffices to prove the result in the case that $N_2 \in \bC I_\fA$.

Note that Lemma \ref{lem:two-point-spectrum-to-one-point} holds when $\alpha$ and $\beta$ are complex numbers by applying rotations and translations.  Hence by applying the same ideas as in Lemma \ref{lem:suffices-to-consider-two-point-spectrum-operators}, we may reduce to the case that $N$ has exactly three points in its spectrum.  

Suppose $\sigma(N_1) = \{\alpha_1, \alpha_2, \alpha_3\}$ and $\gamma \in \conv(\sigma(N_1))$.  Then there exists a permutation $\sigma$ on $\{1,2,3\}$ and $t, r \in [0,1]$ such that if $\gamma' = t \alpha_{\pi(1)} + (1-t)\alpha_{\pi(2)}$ then $\gamma = r \gamma' + (1-r) \alpha_{\pi(3)}$.  Consequently, by applying rotations, translations, compressions, and Lemma \ref{lem:any-scalar-for-a-projection} first with the spectral projections corresponding to $\alpha_{\pi(1)}$ and $\alpha_{\pi(2)}$, and then again with the result and the spectral projection corresponding to $\alpha_{\pi(3)}$, the result is obtained.
\end{proof}

\section*{Acknowledgements}

The author would like to thank Stuart White for his aid with Examples \ref{exam:properties1}, \ref{exam:properties3}, David Kerr for his aid with Example \ref{exam:properties4}, and Ken Dykema for his aid with Example \ref{exam:properties5}.  The author would also like to thank Ping Ng for a conversation that led to Remark \ref{rem:short-proof-of-simple}.

\end{document}